\newtheorem{thm}{Theorem}[section]
\newtheorem{lem}[thm]{Lemma}
\newtheorem{cor}[thm]{Corollary}
\newtheorem{prop}[thm]{Proposition}
\newtheorem{assn}[thm]{Assumption}
\declaretheoremstyle[bodyfont=\normalfont,qed=$\square$]{remark}
\declaretheorem[style=remark,numberlike=thm,name=Remark]{rem}
\numberwithin{equation}{section}
\def\R{{\mathbb{R}}}
\def\Z{{\mathbb{Z}}}
\def\T{{\mathbb{T}}}
\def\E{{\mathbb E}}
\def\e{\varepsilon}
\renewcommand{\P}{{\mathbb P}}
\newcommand{\bzero}{\mathbf 0}
\newcommand{\ba}{\mathbf a}
\newcommand{\bF}{\mathbf F}
\newcommand{\bi}{\mathbf i}
\newcommand{\bI}{\mathbf I}
\newcommand{\bJ}{\mathbf J}
\newcommand{\bk}{\mathbf k}
\newcommand{\bK}{\mathbf K}
\newcommand{\blam}{{\boldsymbol \lambda}}
\newcommand{\bLam}{{\boldsymbol \Lambda}}
\newcommand{\bP}{\mathbf P}
\newcommand{\bp}{\mathbf p}
\newcommand{\bphi}{{\boldsymbol \varphi}}
\newcommand{\bq}{\mathbf q}
\newcommand{\bv}{\mathbf v}
\newcommand{\cD}{\mathcal D}
\newcommand{\cK}{\mathcal K}
\newcommand{\cP}{\mathcal P}
\newcommand{\cQ}{\mathcal Q}
\newcommand{\cU}{\mathcal U}
\newcommand{\cV}{\mathcal V}
\newcommand{\lrang}[1]{{\left\langle #1 \right\rangle }}
\newcommand{\law}{\operatorname{law}}
\newcommand{\supp}{\operatorname{supp}}
\renewcommand{\tilde}{\widetilde}
\definecolor{darkgreen}{rgb}{0,0.7,0}
\definecolor{orange}{rgb}{1,0.45,0}
\long\def\comm#1{}
\title{Incompressible limit for weakly asymmetric simple exclusion processes coupled through collision}
\author{Patrick van Meurs$\,^{1)}$, Kenkichi Tsunoda$\,^{2)}$ and Lu Xu$\,^{3)}$}
\date{}
\begin{document}
\maketitle
 
%
%
%


\begin{abstract}
We establish the incompressible limit of weakly asymmetric simple exclusion processes coupled through particle collisions. The incompressible limit depends on various parameters in the particle system and is linked to fluid dynamics equations. Our main contributions to previous results are the extension of the parameter space and the focus on local particle jumps. Our proof uses the relative entropy method. The main novelties in the proof are a Boltzmann--Gibbs principle (a replacement lemma) and a spectral gap estimate. 

\footnote{
\hskip -6mm 
${}^{1)}$ Faculty of Mathematics and Physics, Kanazawa University, Kakuma, Kanazawa 920-1192, Japan.
e-mail: pjpvmeurs@staff.kanazawa-u.ac.jp \\
${}^{2)}$ Faculty of Mathematics, Kyushu University, 744 Motooka, Nishi-ku, Fukuoka, 819-0395, Japan. e-mail: tsunoda@math.kyushu-u.ac.jp \\
${}^{3)}$ Gran Sasso Science Institute, Viale Francesco Crispi 7, 67100 L'Aquila, AQ, Italy. e-mail: lu.xu@gssi.it}
\footnote{
\hskip -6mm
Keywords: Interacting particle system, hydrodynamic limit, incompressible limit.}
\footnote{
\hskip -6mm
Abbreviated title $($running head$)$: Incompressible limit of particle systems with collisions.}
\footnote{
\hskip -6mm
2020MSC: 60K35, 82C22.
}
\end{abstract}

\tableofcontents

\section{Introduction}
\label{s:intro}

A major open problem in nonequilibrium statistical mechanics is the derivation of the incompressible Navier--Stokes equation from microscopic Hamiltonian dynamics. The main difficulty is the poor knowledge of the ergodic properties of these dynamics. To side-step this difficulty, a common approach is to replace the deterministic Hamiltonian dynamics by stochastic lattice gases. 

Esposito, Marra and Yau \cite{EMY} used this approach to derive the incompressible Navier--Stokes equation as a limit of stochastic lattice gases. The state space of this lattice gas is the discrete torus $\T_N^d = \{1, \ldots, N\}^d$, where $d \ge 3$ is the dimension and $N^{-1}$ is the macroscopic distance between neighboring lattice sites. On $\T_N^d$ particles of several species evolve. Each species is labeled with a velocity $v \in \cV$ for a finite velocity set $\cV \subset \R^d$. The dynamics consists of two parts: 
asymmetric simple exclusions  among particles of the same species, and binary collisions between particles of different species. The asymmetry is such that the particles of species $v$ cause a macroscopic drift given by $v$, and the collisions are such that the momentum of the colliding particles is conserved. The \textit{incompressible limit} is then proved when time and the velocity field are suitably rescaled.
Namely, when the initial state is given by adding a small perturbation of size $N^{-1}$ on the global equilibrium, the corresponding perturbation field satisfies the incompressible limit equation. For $d=3$, a special choice of $\cV$ and under assumptions on the perturbation field, this equation becomes the incompressible Navier--Stokes equation.
\comm{[TASEP: movement ONLY in 1 direction. ASEP: asym part is of the same order as the sym part. \textit{Simple} = nearest-neighbor.]}

This result has set the stage for many further studies that deepen the connection between stochastic lattice gas models and fluid mechanics equations. 
Quastel and Yau \cite{QY} add to the results in \cite{EMY} by building weak solutions to the incompressible Navier--Stokes equation and by providing convergence rates through establishing large deviation principles.
Beltran and Landim \cite{BL} consider a slightly different model in which particles perform \textit{mesoscopic} range jumps with \textit{weak} asymmetry of size $N^{-1+a}$ with parameter $a\in(0,1)$. The incompressible limit is proved under the diffusive time scale for all dimensions $d\ge1$. While this setting allows for a substantially simpler proof based on the relative entropy method, it requires in addition a spectral gap estimate for the full dynamics of the system, which is established only for the specific velocity set $\cV=\{\pm e_j;j=1,\ldots,d\}$.
A similar proof strategy has been exploited to derive both viscous and inviscid Burgers equation from equilibrium perturbations of microscopic conservation laws, see e.g.\ \cite{EMY94,JLT,Seppa,TV,XZ}. 
\comm{[\emph{fluid mechanics equations} cover NS and Burgers equation]}

In the present paper, we aim to contribute to these works. Our setting is most closely related to that in \cite{BL}: we also consider jump probabilities in which the asymmetry is of size $O(N^{-1+a})$ for some $a > 0$ small enough.
However, there are two main differences:
\begin{enumerate}
  \item We consider a large class of sets $\cV$ (see Proposition \ref{p:cV} and Theorem \ref{t} below). In particular, we do not impose the common assumption in \cite{BL,EMY,QY} that the velocity set $\cV$ is closed under reflections and permutations. 
  \item We do not force mesoscopic jumps, and consider instead local jumps (see \eqref{pN} below). This is a more standard modeling choice; see e.g.\ \cite{KL}.
\end{enumerate}
We consider a large class of sets $\cV$ to obtain a general form of the incompressible limit, which is a system of equations (see \eqref{HDL:varphi} below). With this large class we can obtain various couplings between these equations. We consider this as an important generalization of the simple choice $\cV=\{\pm e_j;j=1,\ldots,d\}$ for which most of the equations are decoupled. 

Considering local jumps instead of mesoscopic jumps is more challenging. Indeed, in \cite{BL} the mesoscopic range of the jumps is used to establish the so-called one-block estimate (see the conditions in (2.13) of \cite{BL}). For local jumps this difficulty has recently been overcome in the case of the weakly asymmetric exclusion process in \cite{JLT}, which is a simplified version of our particle system with only one velocity and no collisions. In \cite{JLT} the authors invoke a sharp moment estimate, whose origin is in \cite{JM2}, and obtain the incompressible limit. Unfortunately, this moment estimate can not be applied to our setting directly, due to the mixing caused by the collision dynamics. 

The main result of this paper is the incompressible limit in Theorem \ref{t}. The proof follows Yau's relative entropy method as in \cite{BL}. The main step is the construction of a Boltzmann-Gibbs principle (a replacement lemma). As mentioned above, we cannot follow the proof in \cite{BL} due to the local jumps. Instead, we follow the proof method developed in \cite{EFHPS,FvMST}, where a similar Boltzmann-Gibbs principle is proven for Glauber--Zero-Range and Glauber--Kawasaki processes. A key step in this proof is a spectral gap estimate on the generator of the particle system. To obtain it, we reduce the problem to the spectral gap of a system with pure collision dynamics and extend the proof method used in \cite[Section 6]{BL} to a wider class of velocity sets $\cV$.

We believe that the incompressible limit in Theorem \ref{t} holds for a larger class of velocity sets $\cV$, namely those for which the total mass and momentum are the only conserved quantities of the dynamics. This would include e.g.\ Model II in \cite{EMY} for which the incompressible Navier--Stokes equation was obtained.
However, the spectral gap estimate remains an open problem for general $\cV$, mainly due to the complicated structure of the collision set.
Another open problem concerns the dynamics with boundary effects.
For boundary-driven, multi-species weakly asymmetric exclusion with collisions, the hydrodynamic limit is proved in \cite{AGS,Sim}, while the corresponding incompressible limit is yet unclear.

The paper is organized as follows. In Section \ref{s:setting} we introduce the setting, state our main results (i.e.\ Theorem \ref{t} on the incompressible limit and Proposition \ref{p:cV} on the spectral gap estimate) and discuss our findings. The remaining sections contain the proofs of Theorem \ref{t} and  Proposition \ref{p:cV}; in Section \ref{s:ent:prod} we compute the entropy production, in Section \ref{s:BG} we state and prove the Boltzmann-Gibbs principle, in Section \ref{s:SG} we prove Proposition \ref{p:cV}, and in Appendix \ref{a:props:mulamN} we perform relatively standard computations.

\section{Setting and main result}
\label{s:setting}

As above, let $\T_N^d = \{1, \ldots, N\}^d$ be the $d$-dimensional discrete torus of size $N$, where $N, d \geq 1$ are integers. We denote sites on $\T_N^d$ by $x,y,z \in \T_N^d$.  We consider particles of various species indexed by velocities $v$ from a fixed, finite velocity set $\cV \subset \R^d$. We impose assumptions on $\cV$ whenever required, and postpone the list of standing assumptions on $\cV$ to Section \ref{s:sett:ass}. The particle configuration space is $X_N := (\{0,1\}^\cV)^{\T_N^d}$. For a configuration $\eta \in X_N$, we denote by $\eta_x(v) \in  \{0,1\}$ the absence or presence of a particle at $x$ of species $v$. We interpret $\eta_x \in \{0,1\}^\cV$ as the list of occupancies at site $x$ for each particle species. \comm{[For PvM to remember: a set of ergodic states means that the Markov chain is irreducible on these states. A micro-canonical surface is a subset of states in $X_N$ consisting of \textbf{all} states $\eta$ for which $\bI(\eta) = \bi$ for some fixed $\bi$.]}

On $X_N$ we consider the particle system generated by
\begin{equation} \label{LN}
  L_N = N^2( L_N^{ex} + L_N^c ),
\end{equation}
in which the factor $N^2$ represents the diffusive rescaling in time.
The generator $L_N^{ex}$ describes a unit time nearest-neighbor exclusion process 
given by \comm{[Usually, "simple" exclusion process means nearest-neighbor, but this terminology is not used so carefully sometimes.]}
\begin{equation*}
  (L_N^{ex} f)(\eta) 
  = \sum_{v \in \cV} \sum_{ \substack{ x, y\in\T_N^d \\ |x-y|=1 }} \eta_x(v) \big( 1- \eta_y(v) \big)
p_N(y-x,v) \left\{  f\left(  \eta^{x,y,v}\right)  -f\left(\eta\right)  \right\}.
\end{equation*} 
Here, $f : X_N \to \R$ is a test function and
\[
  \eta_z^{x,y,v}(w) = \begin{cases}
    \eta_y(w)
    &\text{if } w = v \text{ and } z = x,  \\
    \eta_x(w)
    &\text{if } w = v \text{ and } z = y,  \\
    \eta_z(w)
    &\text{otherwise.} 
  \end{cases}
\]
denotes the particle configuration obtained from $\eta$ by swapping the occupancies of species $v$ at sites $x$ and $y$. The transition probabilities  $p_N(z,v)$ for $|z| = 1$ are given by
\begin{equation} \label{pN}
  p_N (z,v) := 1 + \frac{z \cdot v}{N^{1-a}}, 
\end{equation}
where $a \in (0, 1)$ is a given parameter. In our main Theorem \ref{t} we will impose a further explicit upper bound on $a$. Note that under $L_N^{ex}$ each particle species evolves independently. We focus on nearest-neighbor jumps for simplicity; in Remark \ref{r:t:genl:jumps} we mention the extension to local jumps. \comm{the corresponding constant $a_{BL}$ in \cite{BL} is related to $a$ as follows; $a_{BL} = 1-a$ and thus $a = 1-a_{BL}$} 

The second generator in \eqref{LN} describes particle collisions. It is the same as in the previous studies cited in Section \ref{s:intro}. It is given by
\begin{equation}\label{LNc}
  (L_N^c f)(\eta) 
  = \sum_{ x \in \T_N^d } \sum_{q \in \cQ} 
p(x,q,\eta) \left\{  f\left(  \eta^{x,q}\right)  -f\left(\eta\right)  \right\}.
\end{equation}
Here, $f : X_N \to \R$ is again a test function and
\[
  \cQ := \{ q = (v,w,v',w') \in \cV^4 : v + w = v' + w' \}
\]
is the set of all possible momentum-conserving collisions between particles of different species, where $v$ and $w$ are the velocities of the colliding particles, and  $v'$ and $w'$ the velocities of the particles after the collision. In \eqref{LNc}, $p$ is the indicator function on configurations $\eta$ at which a collision $q = (v,w,v',w') \in \cQ$ can happen at $x$, i.e.\ 
\[
  p(x,q,\eta) := \eta_x(v) \eta_x(w) (1 - \eta_x(v')) (1 - \eta_x(w')).
\]
Finally, $\eta^{x,q}$ denotes the configuration obtained from $\eta$ after a collision $q$ happened at $x$. For $q = (v_0,v_1,v_2,v_3) \in \cQ$ it is given by
\[
  \eta_y^{x,q}(w) := \begin{cases}
    \eta_y(v_{j+2})
    &\text{if } y = x \text{ and } w = v_j \text{ for some } 0 \leq j \leq 3, \\
    \eta_y(w)
    &\text{otherwise,} 
  \end{cases}
\] 
where the index of $v_{j+2}$ should be understood modulo $4$. 

In what follows, we denote by $\{ \eta^N(t) \}_{t \geq 0}$ the process on $X_N$ generated by $L_N$ and by $\mu_t^N$ the law of $\eta^N(t)$. We interpret $\mu^N := \mu_0^N$ as the initial distribution. Finally, we set $\P_{\mu^N}(\cdot)$ as the distribution of $t \mapsto \eta^N(t)$ on the RCLL path space $D([0,\infty), X_N)$ conditioned on $\law \eta^N(0) = \mu^N$, and $\E_{\mu^N}$ as the corresponding expectation.

\subsection{Conserved quantities}

Given $\eta \in X_N$ and a site $x$, the mass and $k$-th component of the momentum for the particles at $x$ are respectively
\begin{equation*}
  I_0 (\eta_x) := \sum_{v \in \cV} \eta_x(v),
  \qquad I_k (\eta_x) := e_k \cdot \sum_{v \in \cV} v \eta_x(v) = \sum_{v \in \cV} v_k \eta_x(v).
\end{equation*}
To unify notation, we set
\begin{equation*}
  \bI(\eta_x) := (I_0(\eta_x), \ldots, I_d(\eta_x)) \in \R^{d+1}.
\end{equation*}
We refer to $\bI(\eta_x)$ as the local mass-momentum values of $\eta$ at $x$.
Moreover, given $v \in \cV$, we set
\begin{equation*}
  v_0 := 1, \qquad \bv := (v_0, v_1, \ldots, v_d) \in \R^{d+1}.
\end{equation*}
Then 
\begin{equation} \label{bI}
\bI(\eta_x) = \sum_{v \in \cV} \bv \eta_x(v). 
\end{equation}

The $d+1$ conserved quantities under $L_N$ are the (total) mass-momentum values
\begin{equation} \label{csv:quan}
  \sum_{x \in \T_N^d} \bI(\eta_x) = \sum_{v \in \cV} \bv \sum_{x\in\T_N^d} \eta_x(v).
\end{equation}
Indeed, $L_N^{ex}$ conserves the number of particles for each species $v \in \cV$, and $L_N^c$ conserves $\bI(\eta_x)$ for each site $x$. The latter can be rephrased as Lemma \ref{l:LNc:0}.

\begin{lem} \label{l:LNc:0}
If $f(\eta)$ only depends on $\eta$ through the mass and momentum at $x$, i.e.\ $f(\eta) = \tilde f ( ( \bI(\eta_x))_{x \in \T_N^d} )$, then $L_N^c f = 0$.
\end{lem}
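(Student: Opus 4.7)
The plan is to show that each summand in the definition of $L_N^c f$ vanishes individually. Since $p(x,q,\eta) \in \{0,1\}$, it suffices to treat the case $p(x,q,\eta) = 1$. Writing $q = (v,w,v',w') \in \cQ$, this case is characterised by $\eta_x(v) = \eta_x(w) = 1$ and $\eta_x(v') = \eta_x(w') = 0$.

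The next step is to verify that $\bI(\eta_y^{x,q}) = \bI(\eta_y)$ at every site $y$. For $y \neq x$ this is immediate from the definition of $\eta^{x,q}$, which agrees with $\eta$ off $x$. For $y = x$ I would use \eqref{bI}: only the occupancies at $x$ of the four species $v, w, v', w'$ change, flipping from $(1,1,0,0)$ to $(0,0,1,1)$, so that
\[
  \bI(\eta_x^{x,q}) - \bI(\eta_x) = \bv' + \bw' - \bv - \bw.
\]
The zeroth entry on the right vanishes because every vector of the form $\bu$ with $u \in \cV$ has first component equal to $1$, and the remaining $d$ entries vanish by the momentum-conservation relation $v + w = v' + w'$ encoded in the definition of $\cQ$.

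Once the equality $\bI(\eta_y^{x,q}) = \bI(\eta_y)$ is in hand for every $y$, the hypothesis $f(\eta) = \tilde f((\bI(\eta_x))_{x \in \T_N^d})$ immediately yields $f(\eta^{x,q}) = f(\eta)$, and hence the summand is zero. I do not anticipate any genuine obstacle here: the statement amounts to a purely algebraic check that the collision dynamics was designed precisely to preserve the local mass-momentum vector at every site, and nothing probabilistic about $L_N^c$ enters the proof.
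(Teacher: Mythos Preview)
Your proof is correct and is exactly the direct verification the paper has in mind; the paper does not spell out a proof of this lemma, merely noting that $L_N^c$ conserves $\bI(\eta_x)$ at each site, which is precisely what your computation $\bI(\eta_x^{x,q})-\bI(\eta_x)=\bv'+\bw'-\bv-\bw=\bzero$ establishes.
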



In general, the conserved quantities in \eqref{csv:quan} need not be unique. What we mean by uniqueness is that all micro-canonical ensembles of $X_N$ are ergodic under $L_N$. These ensembles are given by 
\begin{equation*}
  \Big\{ \eta \in X_N : \frac1{N^d} \sum_{x \in \T_N^d} \bI(\eta_x) =\bp \Big\},
\end{equation*}
where $\bp \in \R^{d+1}$ denotes attainable averaged mass-momentum values.
In our main Theorem \ref{t} we will restrict the choice of $\cV$ such that all micro-canonical ensembles are ergodic. 

\subsection{Invariant measures}  
\label{s:sett:invmeas}

We introduce the invariant measures $\mu_\blam^N$ of $L_N$, which are parametrized by the chemical potential $\blam = (\lambda_0, \ldots, \lambda_d) \in \R^{d+1}$. They are given by
\begin{equation} \label{mulam:prod}
  \mu_\blam^N (\eta)
  = \prod_{x \in \T_N^d} \prod_{v \in \cV} \frac{ e^{\blam \cdot \bv \eta_x(v)} }{e^{ \blam \cdot \bv } + 1},
\end{equation}
i.e.\ they are the product Bernoulli measure on $\{ 0,1 \}^{\T_N^d \times \cV}$ with parameter $\theta(\blam \cdot \bv)$, where
\begin{equation} \label{theta}
  \theta : \R \to (0,1), \qquad \theta(\alpha) := \frac{e^\alpha}{e^\alpha + 1}.
\end{equation}

Next we list several properties of $\mu_\blam^N$. Note that $\mu_\blam^N (\eta)$ can alternatively be expressed as
\begin{equation} \label{mulam:bI}
  \mu_\blam^N (\eta)
  = \prod_{x \in \T_N^d} \frac{ e^{\blam \cdot \sum_{v \in \cV} \bv \eta_x(v)} }{\prod_{v \in \cV} (e^{ \blam \cdot \bv } + 1)}
  = \frac1{\prod_{v \in \cV} (e^{ \blam \cdot \bv } + 1)} \prod_{x \in \T_N^d} e^{\blam \cdot \bI(\eta_x)}.
\end{equation}
Then, by Lemma \ref{l:LNc:0} we obtain $L_N^c \mu_\blam^N = 0$. The remaining properties that follow are proved in Appendix \ref{a:props:mulamN}. $\mu_\blam^N$ is invariant both under $L_N^{ex}$ and $L_N^c$, and therefore also under $L_N$. $L_N^c$ is symmetric with respect to $\mu_\blam^N$. The symmetric and anti-symmetric part of $L_N^{ex}$ with respect to $\mu_\blam^N$ are respectively
  \begin{equation}  \label{LNexs}
    (L_N^{ex,s} f)(\eta) 
  = \sum_{v \in \cV} \sum_{ \substack{ x, y\in\T_N^d \\ |x-y|=1 }} \eta_x(v) \big( 1- \eta_y(v) \big)
\left\{  f\left(  \eta^{x,y,v}\right)  -f\left(\eta\right)  \right\}.
  \end{equation}
  and
  \begin{equation} \label{LNexa} 
    (L_N^{ex,a} f)(\eta) 
  = \frac1{N^{1-a}} \sum_{v \in \cV} \sum_{ \substack{ x, y\in\T_N^d \\ |x-y|=1 }} ((y-x) \cdot v) \eta_x(v) \big( 1- \eta_y(v) \big)
\left\{  f\left(  \eta^{x,y,v}\right)  -f\left(\eta\right)  \right\}.
  \end{equation} 

For later use, we re-parametrize $\mu_\blam^N$ in terms of the averaged mass-momentum values $\bp$. Given $\blam$, these values are given by
\begin{equation} \label{bP}
  \bP : \R^{d+1} \to \R^{d+1}, \qquad \bP(\blam) := E_{\mu_{\blam}^N} [\bI(\eta_x)] = \sum_{v \in \cV} \bv \theta(\blam \cdot \bv).
\end{equation} 
We note that for $\blam = \bzero$ we have 
\begin{equation} \label{bpstar}
  \bP(\bzero) 
  =  \frac12 \sum_{v \in \cV} \bv
  =: \bp_*.
\end{equation} 
To re-parametrize $\mu_\blam^N$, we need to invert $\bP$. With this aim, we set $\cU \subset \R^{d+1}$ as the range of $\bP$. For the moment, we assume that $\cV$ is such that $\bP$ is a diffeomorphism onto $\cU$ (in our main Theorem \ref{t} a much weaker and explicit assumption on $\cV$ will be sufficient). We set 
$\bLam := \bP^{-1} : \cU \to \R^{d+1}$, and re-parametrize the invariant measures as
\[
  \nu_\bp^N := \mu_{\bLam(\bp)}^N.
\]

\subsection{Informal derivation of the incompressible limit}
 
From the hydrodynamic limits obtained in \cite{AGS,Sim} we expect the hydrodynamic limit of our process $\eta^N(t)$ to be \comm{[For PvM to remember: HDL refers to the limit to the $\bp$ variable, and incompr lim refers to the limit to the $\bphi$ variable]}
\begin{equation} \label{HDL:rhop}
  \partial_t \bp + 2 N^a \sum_{v \in \cV} \big( v \cdot \nabla \big[ \chi \big(\theta( \bv \cdot \bLam(\bp) ) \big) \big] \big) \bv  = \Delta \bp + O(N^{-1+a}),
\end{equation}
where $\chi(\vartheta) := \vartheta(1-\vartheta)$ is the compressibility and $p_k(t,\cdot) \in C^2(\T^2)$ is the supposed deterministic limit of 
\[
   \frac1{N^d} \sum_{x\in\T_N^d} I_k(\eta_x^N(t)) \delta_{x/N} \qquad \text{for } k = 0,\ldots,d,
\]
which is the random empirical measure on $\T^d$ of the $k$-th conserved quantity of the process $\eta^N(t)$ at time $t$.

As is clear from the prefactor $N^a$ in \eqref{HDL:rhop}, the hydrodynamic scaling only fits for the usual weakly asymmetric interactions (i.e.\ $a = 0$) whereas we have $a \in (0,1)$ in \eqref{pN}. Hence, we apply the incompressible scaling as in \cite{BL,EMY,QY}. In preparation for this we expand $\bp$ around $\bp_*$ (recall \eqref{bpstar}) as 
\begin{equation} \label{varphik:def}
  \bphi := N^a (\bp - \bp_*), \qquad \bp = \bp_* + \frac{\bphi}{N^a}. 
\end{equation}
Substituting this expression in \eqref{HDL:rhop} and neglecting terms of size $O(N^{-a})$, we obtain from a computation the desired equation of $\bphi$; see \eqref{HDL:varphi} below. We omit this computation here. Instead, the proof of our main Theorem \ref{t} demonstrates that \eqref{HDL:varphi} is indeed the incompressible limit. It is given by
\begin{equation} \label{HDL:varphi} 
  \partial_t \bphi = \sum_{v \in \cV} (\bv \cdot A_\cV \bphi) (\bv \cdot A_\cV \nabla \bphi v) \bv + \Delta \bphi,
\end{equation}
where 
\begin{equation} \label{AV}
  A_\cV := 2 \Big[\sum_{v \in \cV} \bv \otimes \bv\Big]^{-1} \in \R^{(d+1) \times (d+1)}
\end{equation}
is a symmetric matrix and $\nabla \bphi$ is the Jacobian, whose entries are given by $[\nabla \bphi]_{kl} = \partial_l \varphi_k$ with $0 \leq k \leq d$ and $1 \leq l \leq d$. The invertibility of \eqref{AV} follows from the invertibility of $\bP$.
In components, \eqref{HDL:varphi} reads \comm{[detailed computations in Patrick's written notes p.75]}
\[
  \partial_t \varphi_k = \sum_{i,j=0}^d \sum_{l=1}^d C_{kijl} \varphi_i \partial_l \varphi_j + \Delta \varphi_k \qquad (0 \leq k \leq d)
\] 
with constants
\[
  C_{kijl} = C_{kijl}(\cV) := \sum_{v \in \cV} v_k (A_\cV \bv)_i (A_\cV \bv)_j v_l.
\]

\subsection{Assumptions}
\label{s:sett:ass}

Here we state the precise list of assumptions that we need for our main Theorem \ref{t}.
First, for the incompressible limit \eqref{HDL:varphi} to be well defined, we need the following on $\cV$:

\begin{assn} \label{a:AV}
Recall that $\bv=(1,v)$ for $v\in\cV$.
The matrix $\sum_{v\in\cV} \bv \otimes \bv$ is invertible.
\end{assn} 

Second, we comment below in Remark \ref{r:bphi} on the existence of a solution $\bphi$ of \eqref{HDL:varphi}; Theorem \ref{t} only applies on a time interval where such a solution exists.

\begin{rem} \label{r:bphi}
For $T > 0$ small enough, the existence and uniqueness of a classical solution of \eqref{HDL:varphi} for any sufficiently smooth initial condition follows from parabolic regularity theory; see e.g.\ \cite{Ang}, which is based on the theory of DaPrato and Grisvard of 1979.
For general $T > 0$ we are not aware of existence results in the literature. 
\end{rem}

Third, we need to compare a solution $\bphi$ of \eqref{HDL:varphi} to the distribution $\mu_t^N \in \cP(X_N)$ of the process $\eta^N(t)$. We do this by constructing a measure $\nu_t^N \in \cP(X_N)$ from $\bphi$. For this construction, we first take the field $\bp$ corresponding to $\bphi$ from \eqref{varphik:def}. To obtain the corresponding field $\blam$, we rely on the following proposition.

\begin{prop} \label{p:nutN}
Let $\cV$ be as in Assumption \ref{a:AV}. Then, $\bP$ is invertible in a neighborhood of $\bp_* = \bP(\bzero)$ and the corresponding inverse map $\bLam$ is smooth. Moreover, for a classical solution $\bphi$ of \eqref{HDL:varphi} on $[0,T]$ and for $N$ large enough, the function
\[
  \blam (t,x)
  := \bLam \Big(\bp_* + N^{-a} \bphi \Big(t, \frac xN \Big) \Big)
\]
is well-defined for all $t \in [0,T]$ and all $x \in \T_N^d$.
\end{prop}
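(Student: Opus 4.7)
The plan is a direct application of the inverse function theorem, combined with the compactness of $[0,T]\times\T^d$ (via continuity of $\bphi$) to ensure that the argument of $\bLam$ stays within the neighborhood of invertibility for every sufficiently large $N$.

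First, I would compute the Jacobian of $\bP$. Differentiating the formula \eqref{bP}, namely $\bP(\blam) = \sum_{v\in\cV} \bv\, \theta(\blam \cdot \bv)$, and using $\theta'(\alpha) = \theta(\alpha)(1-\theta(\alpha))$, one obtains
$$
D\bP(\blam) = \sum_{v\in\cV} \theta'(\blam\cdot\bv)\, \bv \otimes \bv.
$$
Since $\theta'(0) = 1/4$, evaluating at $\blam = \bzero$ gives
$$
D\bP(\bzero) = \tfrac{1}{4}\sum_{v\in\cV} \bv \otimes \bv,
$$
which by Assumption \ref{a:AV} is invertible; comparing with \eqref{AV}, its inverse equals $2 A_\cV$. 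Since $\theta$ is real-analytic, $\bP$ is smooth, so the inverse function theorem provides an open neighborhood $U\subset\R^{d+1}$ of $\bzero$ and an open neighborhood $V$ of $\bp_* = \bP(\bzero)$ such that $\bP:U\to V$ is a smooth diffeomorphism. Its inverse $\bLam = \bP^{-1}:V\to U$ is then smooth, which proves the first two claims.

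For the third claim, choose $r>0$ small enough that the closed ball $\overline{B_r(\bp_*)}$ is contained in $V$. Since $\bphi$ is a classical solution of \eqref{HDL:varphi} on $[0,T]$, it is continuous, and hence bounded, on the compact set $[0,T]\times\T^d$; set $M := \sup_{(t,y)\in[0,T]\times\T^d}|\bphi(t,y)| < \infty$. Then for every $N$ with $N^a > M/r$ and every $(t,x)\in[0,T]\times\T_N^d$ one has
$$
\Big|\bp_* + N^{-a}\bphi(t, x/N) - \bp_*\Big| \le N^{-a} M < r,
$$
so the point $\bp_* + N^{-a}\bphi(t,x/N)$ lies in $V$ and $\blam(t,x) = \bLam\bigl(\bp_* + N^{-a}\bphi(t,x/N)\bigr)$ is well-defined.

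There is essentially no serious obstacle: the statement follows from the inverse function theorem together with the elementary fact that a continuous function on a compact set is bounded. The only mild subtlety is identifying the Jacobian at $\blam=\bzero$ with (a scalar multiple of) the matrix appearing in Assumption \ref{a:AV}, which shows that this assumption is exactly what is needed to invoke the inverse function theorem.
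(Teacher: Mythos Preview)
Your proposal is correct and follows essentially the same approach as the paper's own proof: compute $D\bP(\bzero)=\tfrac14\sum_{v\in\cV}\bv\otimes\bv$, invoke Assumption \ref{a:AV} to conclude invertibility, apply the inverse (implicit) function theorem to obtain a smooth local inverse $\bLam$, and then use $\|\bphi\|_\infty<\infty$ to ensure the argument of $\bLam$ lies in the neighborhood of invertibility for all large $N$. Your write-up is simply a more detailed version of the paper's short argument.
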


\begin{proof}
It is obvious from \eqref{bP} that $\bP$ is smooth. A computation (see Section \ref{s:ent:prod:rel}) shows that $\nabla \bP(\bzero) = \frac14 \sum_{v \in \cV} \bv \otimes \bv$, which by Assumption \ref{a:AV} is invertible. Then, by the Implicit Function Theorem $\bP$ is invertible in a neighborhood of $\bP(\bzero) = \bp_*$ on which $\bLam = \bP^{-1}$ is smooth. Since $\| \bphi \|_\infty < \infty$, the last statement of Proposition \ref{p:nutN} follows.
\end{proof}

With the function $\blam$ constructed from a solution $\bphi$ in Proposition \ref{p:nutN}, we set (in accordance to \eqref{mulam:prod})
\begin{equation} \label{nutN}
  \nu_t^N 
  := \mu_{\blam(t,\cdot)}^N 
  := \prod_{x \in \T_N^d} \prod_{v \in \cV} \frac{ e^{\blam(t,x) \cdot \bv \eta_x(v)} }{e^{ \blam(t,x) \cdot \bv } + 1}
\end{equation} 
as the product measure associated to $\blam(t,\cdot)$.

Fourth, we need that $\cV$ is such that the total mass-momentum values in \eqref{csv:quan} are the only conserved quantities. In fact, our proof of Theorem \ref{t} requires a stronger, technical assumption, namely that a certain localized version of $L_N$ satisfies a sufficient spectral gap estimate. To state this estimate, we introduce some notation.
Let $M \ge 1$ be an integer and
\begin{equation*}
  \Lambda_M:=\big\{x\in\mathbb Z^d;|x_i| \le M,\forall\,i=1,\ldots,d\big\}
\end{equation*}
be a box in $\Z^d$.  
Observe that $|\Lambda_M|=(2M+1)^d$ and that $\Lambda_M$ can be embedded into $\T_N^d$ if $N \ge 2M+1$.
For $x\in\T_N^d$ and $\eta \in X_N$, let
\begin{equation*} 
  \bI_x^M(\eta)
  :=\frac1{|\Lambda_M|}\sum_{z \in x+\Lambda_M} \bI(\eta_z)
\end{equation*}
be the average over the shifted box $x + \Lambda_M$ of the local mass-momentum values.
Denote by $\cD_M$ the set of all possible values of $\bI_0^M$ when $\eta$ runs over $\{0,1\}^{\Lambda_M\times\cV}$.
Define the micro-canonical surface
\begin{equation}\label{mic-can space}
  Y_M(\bi):=\big\{\eta\in\{0,1\}^{\Lambda_M\times\cV} \,|\, \bI_0^M(\eta)=\bi\big\}, \quad \forall\,\bi\in\cD_M.
\end{equation}
Let $\nu_{M,\bi}$ be the micro-canonical Gibbs measure on $Y_M(\bi)$ given by
\begin{equation}\label{mic-can measure}
  \nu_{M,\bi}(\eta):=\nu_\bp^N\big(\eta\,|\,Y_M(\bi)\big) = \big|Y_M(\bi)\big|^{-1}, \quad \forall\,\eta \in Y_M(\bi).
\end{equation}
The second equality follows from the fact that the invariant measure $\nu_\bp^N$ is invariant under both particle jumps and collisions.
Thus, $\nu_{M,\bi}$ is the uniform measure on $Y_M(\bi)$. In particular, it is independent of $\bp$.

Recall the generators defined in \eqref{LNexs} and \eqref{LNc}.
Their restrictions to $\Lambda_M$ without specific boundary condition are respectively given by
\begin{equation}\label{restricted operators}
  \begin{aligned}
    L_M^{ex,s}f(\eta) &= \sum_{v\in\cV} \sum_{\substack{x,y\in\Lambda_M\\|x-y|=1}} \eta_x(v) \big(1-\eta_y(v)\big) \big[f(\eta^{x,y,v})-f(\eta)\big],\\
    L_M^{c}f(\eta) &= \sum_{q\in\cQ} \sum_{x\in\Lambda_M} p(x,q,\eta) \big[f(\eta^{x,q})-f(\eta)\big].
  \end{aligned}
\end{equation}
Note that both are independent of $N$.

\begin{assn}[Spectral gap]\label{a:sg}
There exist constants $\kappa$ and $C$, both depending only on $d$ and $\cV$, such that for all $M \ge 1$, $\bi\in\cD_M$ and $f:Y_M(\bi) \to \R$, 
\begin{equation*}
  E_{\nu_{M,\bi}} \left[ \big(f - E_{\nu_{M,\bi}}[f]\big)^2 \right] \le CM^\kappa \big\langle f, -(L_M^{ex,s}+L_M^{c})f \big\rangle_{\nu_{M,\bi}}.
\end{equation*}
\end{assn}

Since Assumptions \ref{a:AV} and \ref{a:sg} on $\cV$ are implicit, we provide in Proposition \ref{p:cV} a class of $\cV$ for which both assumptions hold. \comm{[For PvM not to forget: The bdy condns in Assumption \ref{a:sg} are not important; it also applies to $L_N$ on $\T_N^d$. Then, it gives exponential in-time convergence of $\mu_t^N$ to the micro-canonical Gibbs measure, irrespective of the initial condition. This show that $Y_N(\bp)$ is irreducible, i.e.\ we have uniqueness of csvd quans.]}

\begin{prop}[Possible class of $\cV$]\label{p:cV}
Let $n \geq 1$, $v_*, v_1, v_2, \ldots, v_n \in \R^d$ and
\begin{equation}\label{eq:cV}
  \cV=\{v_* \pm v_\ell;\ell=1,\ldots,n\}.
\end{equation}
Then the following two statements hold:
\begin{enumerate}[label=(\roman*)]
  \item \label{p:cV:AV} if $\{v_1, \ldots, v_n\}$ spans $\R^d$, then $\cV$ satisfies Assumption \ref{a:AV};
  \item \label{p:cV:sg} if the only integer-valued solution to 
\[\alpha_1v_1+\cdots+\alpha_nv_n=0\] 
is $\alpha_1=\cdots=\alpha_n=0$, then $\cV$ satisfies Assumption \ref{a:sg} with 
\begin{equation*} 
  \kappa = (2 n + 1)d + 2.
\end{equation*}
\end{enumerate} 
\end{prop}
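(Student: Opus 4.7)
The plan is to verify the two assumptions separately, since they are essentially independent.

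Part (i) is a direct computation. Writing each $\bv \in \R^{d+1}$ as $(1, v)$ and pairing the $v_* + v_\ell$ and $v_* - v_\ell$ summands eliminates the cross terms linear in $v_\ell$, so that, in block form with respect to the decomposition $\R^{d+1} = \R \oplus \R^d$,
\[
  \sum_{v \in \cV} \bv \otimes \bv
  = \begin{pmatrix} 2n & 2n\, v_*^\top \\ 2n\, v_* & 2n\, v_* v_*^\top + 2 \sum_{\ell=1}^n v_\ell v_\ell^\top \end{pmatrix}.
\]
The Schur complement of its $(1,1)$ block equals $2 \sum_\ell v_\ell v_\ell^\top$, which is invertible if and only if $\{v_1, \ldots, v_n\}$ spans $\R^d$. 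This is exactly the hypothesis in (i), and the block matrix is then invertible by the Schur-complement criterion.

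For (ii), the main structural input from the integer-independence hypothesis is a classification of collisions. Writing any non-trivial $q = (v_0,v_1,v_2,v_3) \in \cQ$ with pairwise distinct entries as $v_j = v_* + \epsilon_j v_{\ell_j}$ with $\epsilon_j \in \{\pm 1\}$, the equality $v_0 + v_1 = v_2 + v_3$ becomes an integer relation $\sum_{\ell=1}^n \alpha_\ell v_\ell = 0$ with $\alpha_\ell \in \{-2,\ldots,2\}$. Assumption (ii) forces every $\alpha_\ell = 0$, and a short case analysis on the coincidences among $\ell_0,\ell_1,\ell_2,\ell_3$ then leaves only the pair-swap collisions $\{v_0, v_1\} = \{v_* \pm v_a\}$ and $\{v_2, v_3\} = \{v_* \pm v_b\}$ for some $a \neq b$. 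Consequently the signed differences
\[
  s_\ell(\eta) := n_{v_* + v_\ell}(\eta) - n_{v_* - v_\ell}(\eta), \qquad n_v(\eta) := \sum_{x \in \Lambda_M} \eta_x(v),
\]
are preserved by $L_M^c$; since $L_M^{ex,s}$ preserves each $n_v$, the vector $(s_\ell)_{\ell=1}^n$ is conserved by the full dynamics, and on $Y_M(\bi)$ it is uniquely determined by $\bi$ (again using (ii) to invert $(s_\ell) \mapsto \sum_\ell s_\ell v_\ell$).

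With this structural reduction, I would follow the two-scale strategy of \cite[Section~6]{BL}, using the species-count profile $\bn(\eta) = (n_v(\eta))_{v \in \cV}$. The tower formula gives
\[
  \mathrm{Var}_{\nu_{M,\bi}}(f) = E_{\nu_{M,\bi}}\big[\mathrm{Var}(f \mid \bn)\big] + \mathrm{Var}_{\nu_{M,\bi}}\big(\bar f(\bn)\big), \qquad \bar f(\bn) := E_{\nu_{M,\bi}}[f\mid \bn].
\]
Conditioning on $\bn$ produces a product of single-species uniform measures, on each of which $L_M^{ex,s}$ acts as an independent symmetric simple exclusion in a box of side $\asymp M$. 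The classical $O(M^{-2})$ spectral gap for such exclusion then yields
\[
  E_{\nu_{M,\bi}}\big[\mathrm{Var}(f\mid\bn)\big] \le C M^{2} \big\langle f, -L_M^{ex,s} f \big\rangle_{\nu_{M,\bi}},
\]
which produces the $+2$ in $\kappa$. For the second term, the collision classification above shows that the projected chain on $\bn$ induced by $L_M^c$ lives on the simplex $\{\bk \in \N^n : \sum_\ell k_\ell = K\}$ of pair counts $k_\ell = \tfrac12(n_{v_* + v_\ell} + n_{v_* - v_\ell} - |s_\ell|)$, with moves $(k_\ell, k_m) \to (k_\ell - 1, k_m + 1)$ whose rates average the indicator $p(x,q_{\ell\to m},\eta)$ over the fibre.

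The main obstacle is the lower bound for the spectral gap of this reduced pair-count chain, together with a Dirichlet-form comparison that remains valid near the boundary of the simplex (where some $n_v$ is close to $0$ or $|\Lambda_M|$). I would adapt the moving-particle argument of \cite[Section~6]{BL}: in the interior of the simplex the averaged rate is of order $|\Lambda_M| \asymp M^d$, and a mean-field pair-swap walk on $\{1,\ldots,n\}$ with $O(1)$ spectral gap supplies a canonical path, so one obtains a reduced gap of order $M^{-d}$; degenerate configurations are reached by an auxiliary path that first uses exclusion to co-locate a compatible pair at some site (at an extra polynomial cost) and then executes a single collision, repeated over the $2n$ coordinates. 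Summing these polynomial losses and combining with the exclusion contribution gives the claimed exponent $\kappa = (2n+1)d + 2$. The delicate step is uniformising the reduced rate over degenerate $\bn$; once that bound is in place, the remaining pieces are standard spectral-gap comparisons.
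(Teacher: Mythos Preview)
Your Part~(i) is correct and arguably cleaner than the paper's: you compute the same block matrix but finish via the Schur complement, whereas the paper verifies positive definiteness of the quadratic form $\ba \mapsto \ba^T(\sum_v \bv\otimes\bv)\ba$ directly.

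For Part~(ii), your collision classification and the tower decomposition $\mathrm{Var}(f)=E[\mathrm{Var}(f\mid\bn)] + \mathrm{Var}(\bar f(\bn))$ are correct and match the paper's structure. The first term is handled essentially the same way: the paper routes through the mean-field exclusion $\tilde L_M^{ex,s}$ with Quastel's uniform gap and then compares back to $L_M^{ex,s}$ at cost $M^2$; your direct appeal to the $M^{-2}$ SSEP gap on $\Lambda_M$ is equivalent.

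The genuine gap is the reduced-chain step, which you yourself flag as unfinished. Your sketch (``averaged rate of order $M^d$'', ``reduced gap of order $M^{-d}$'', auxiliary paths for degenerate $\bn$) does not produce the exponent $(2n+1)d$ in any transparent way, and the boundary of the simplex is exactly where a naive canonical-path bound can fail. The paper handles this in two specific moves you are missing. First, it introduces a \emph{long-range collision operator} $\tilde L_M^c$ (particles at any four sites collide, with prefactor $|\Lambda_M|^{-3}$) and invokes the comparison $\langle f,-\tilde L_M^c f\rangle \le C(\langle f,-\tilde L_M^{ex,s} f\rangle + \langle f,-L_M^c f\rangle)$ from \cite[Lemma~6.2]{BL}; this is what makes the projected Dirichlet form on $\bk$ carry the exact kernel $p(q,\bk)=k_v k_w(|\Lambda_M|-k_{v'})(|\Lambda_M|-k_{w'})$, with the $|\Lambda_M|^3$ appearing as a clean multiplicative loss. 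Second, the reduced spectral gap comes from a combinatorial lemma specific to the present $\cV$: the state space $\cD_{M,\bi}$ has size at most $(|\Lambda_M|+1)^{n-1}$, and from any $\bk$ one can reach a fixed maximiser $\bk^*$ of $\bar\nu_{M,\bi}$ along a path of single collisions that is \emph{monotone non-decreasing in} $\bar\nu_{M,\bi}$. The monotonicity is what makes the path argument uniform at the boundary (where $p(q,\bk)$ may be as small as $1$), and its proof uses the factorisation $\bar\nu_{M,\bi}(\bk)\propto\prod_\ell\binom{|\Lambda_M|}{k_\ell}\binom{|\Lambda_M|}{k_\ell-\alpha_\ell}$ together with strict monotonicity of $h_\alpha(k)=\frac{(k+1)(k-\alpha+1)}{(|\Lambda_M|-k)(|\Lambda_M|-k+\alpha)}$. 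The final accounting is $|\cD_{M,\bi}|^2\cdot|\Lambda_M|^3\cdot M^2 \le CM^{2(n-1)d}\cdot M^{3d}\cdot M^2 = CM^{(2n+1)d+2}$; note that the $M^2$ from the exclusion comparison enters \emph{both} terms of the tower formula, not only the first.
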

 
We prove Proposition \ref{p:cV} in Section \ref{s:SG}. Here, we give several comments to parse it. First, $v_*$ is the common drift direction. Second, the `discrete independence' condition in \ref{p:cV:sg} implies in particular that only particles with opposite velocity can collide, i.e.
\begin{equation*} 
  \cQ = \{ (v,-v,w,-w) : v,w \in \cV \}.
\end{equation*}
Third, the conditions in \ref{p:cV:AV} (when $n \geq d$) and \ref{p:cV:sg} only exclude special cases; indeed, if $v_1, \ldots, v_n$ are chosen independently from an absolutely continuous distribution, then both conditions hold almost surely. 
Fourth, we do not expect the value of $\kappa$ to be optimal.

\subsection{Main result}
\label{s:sett:t}

To state our main Theorem \ref{t}, we recall that  $d \geq 1$, that $\mu_t^N$ is the law of the process $\eta^N(t)$ generated by $L_N$ (see \eqref{LN}) with given initial distribution $\mu^N$, and that for a solution $\bphi$ of \eqref{HDL:varphi} we set for $N$ large enough $\bp = \bp_* + N^{-a}\bphi$ as the corresponding field of the conserved quantities and $\nu_t^N$ as the measures associated to $\blam(t, \cdot) = \bLam (\bp(t,\cdot))$ (see \eqref{nutN} and Proposition \ref{p:nutN}). In addition, we introduce the relative entropy for measures $\mu$ and $\nu$ on $X_N$ with $\nu$ having full support as
\begin{equation*}
  H(\mu | \nu)
  := \sum_{\eta \in X_N} \mu (\eta) \log \frac{\mu (\eta)}{\nu (\eta)}.
\end{equation*}
Finally, we introduce $C^{k,\ell}([0,T] \times \T^d; \R^{d+1})$ as the space of functions that are $k$ times continuously differentiable in time and $\ell$ times continuously differentiable in space.

\begin{thm}[Main] \label{t}
Let $\cV$ be such that Assumptions \ref{a:AV} and \ref{a:sg} hold for some constant $\kappa > 0$ and let 
\[
  0 < a < \frac d{\kappa + 2d}.
\] 
Let $\bphi \in C^{1,3}([0,T] \times \T^d; \R^{d+1})$ be a classical solution of \eqref{HDL:varphi} for some $T > 0$ (see Remark \ref{r:bphi} above) and let $\nu_t^N$ be the corresponding measure mentioned above for all $t \in [0,T]$ and all $N$ large enough. Let $\mu^N$ be an initial distribution and let $\mu_t^N$ be the law of the process $\eta^N(t)$ generated from it by $L_N$. 
If $H(\mu^N | \nu_0^N) = o(N^{d-2a})$, then $H(\mu_t^N | \nu_t^N) = o(N^{d-2a})$ for all $t \in [0,T]$.
\end{thm}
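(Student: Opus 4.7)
The plan is to apply Yau's relative entropy method in the classical form. Set $H_N(t) := H(\mu_t^N \mid \nu_t^N)$ and let $f_t^N := d\mu_t^N / d\nu_t^N$. Yau's inequality gives
\[
  \partial_t H_N(t) \leq \int \frac{1}{\nu_t^N}\Big( L_N^* \nu_t^N - \partial_t \nu_t^N \Big)\, d\mu_t^N,
\]
where $L_N^*$ is the adjoint with respect to the reference counting measure on $X_N$. The strategy is to show that the integrand above is of the form $o(N^{-2a})$ (uniformly in $\eta$) plus a term bounded by $C H_N(t)/N^d$, after which Gr\"onwall's inequality yields the claim. To reach such a bound, I would first compute the entropy production explicitly: split $L_N = N^2(L_N^{ex,s} + L_N^{ex,a} + L_N^c)$ and use that $\nu_t^N$ is the product Bernoulli measure with slowly varying parameter $\blam(t,x)=\bLam(\bp_*+N^{-a}\bphi(t,x/N))$. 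Since $L_N^c$ is $\nu_t^N$-symmetric at frozen parameter and $\nu_{M,\bi}$ is independent of $\bp$, the collision contribution is a Taylor remainder in the small variation of $\blam$ across sites.

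Next, Taylor expand $\blam(t,x/N)$ in powers of $N^{-1}$ around each lattice point. The leading order of $N^2 L_N^{ex,s}$ combined with the $\partial_t \log \nu_t^N$ term gives the Laplacian contribution $\Delta\bphi$; the leading order of $N^2 L_N^{ex,a}$, which has prefactor $N^{1+a}$, produces the drift-type bilinear term $\sum_{v\in\cV}(\bv\cdot A_\cV\bphi)(\bv\cdot A_\cV \nabla\bphi\, v)\bv$ once local quantities like $\eta_x(v)(1-\eta_y(v))$ are replaced by their equilibrium expectations. The $L_N^c$ contribution, after the Taylor expansion, yields additional terms that are either absorbed into the drift or shown to be lower order.

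The crux is justifying the replacement of the local microscopic quantities appearing after Taylor expansion by functions of the mesoscopic empirical mass-momentum field $\bI_x^M(\eta)$, i.e.\ the Boltzmann--Gibbs principle of Section \ref{s:BG}. I would apply it in the quantitative form developed in \cite{EFHPS,FvMST}: split each local function into its conditional expectation given $\bI_x^M$ (the ``projection'' part, matched against the PDE) and a mean-zero residue. The residue is estimated using the entropy inequality
\[
  \int g\, d\mu_t^N \leq \frac{H_N(t)}{\gamma} + \frac{1}{\gamma}\log\int e^{\gamma g}\, d\nu_t^N,
\]
combined with a variational formula bounded via Assumption \ref{a:sg}; the spectral gap with exponent $\kappa$ converts the $L^2(\nu_{M,\bi})$ norm of the residue into a Dirichlet-form cost of order $M^\kappa$. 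Optimizing $M$ against the asymmetry scale $N^{-1+a}$ yields a replacement error of the form $C_1 N^{d-2a}\varepsilon+C_2 H_N(t)$ provided $a<d/(\kappa+2d)$, which is exactly the hypothesis of Theorem \ref{t}. After these replacements, the leading-order terms cancel because $\bphi$ solves \eqref{HDL:varphi}, while subleading terms are handled via smoothness of $\bphi$.

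The main obstacle is the Boltzmann--Gibbs principle for \emph{local} jumps: unlike \cite{BL}, the one-block estimate cannot be obtained by direct averaging over the jump range, and the sharp-moment argument of \cite{JLT} does not apply because collisions mix the species. Getting the right balance between the block size $M$ (which controls the variance of the empirical field), the spectral-gap cost $M^\kappa$, and the smallness $N^{-1+a}$ of the asymmetry is the delicate point that fixes the admissible range of $a$; this step will also dictate the precise smoothness $C^{1,3}$ required of $\bphi$, since we need two spatial derivatives of $\bphi$ to control the Taylor remainders and a third to control gradients of the drift.
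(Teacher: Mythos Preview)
Your proposal follows the same route as the paper: Yau's inequality, explicit entropy-production computation, a Boltzmann--Gibbs replacement proved via Feynman--Kac and the spectral gap of Assumption~\ref{a:sg}, cancellation of the leading terms by \eqref{HDL:varphi}, and Gronwall. Two points are slightly off and worth correcting before you flesh it out.

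First, the collision contribution to the entropy production is not a ``Taylor remainder in the small variation of $\blam$ across sites''; it vanishes \emph{exactly}. Collisions act at a single site and preserve $\bI(\eta_x)$, while $\nu_t^N(\eta)$ depends on $\eta$ only through $(\bI(\eta_x))_x$ (see \eqref{mulam:bI} and Lemma~\ref{l:LNc:0}). Hence $L_N^c\nu_t^N=0$ identically, and only the exclusion part survives in $\int L_N f_t^N\,d\nu_t^N$. This actually simplifies the computation relative to what you sketched.

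Second, in the Boltzmann--Gibbs step the entropy inequality is applied with respect to the \emph{homogeneous} equilibrium $\nu_{\bp_*}^N$, not $\nu_t^N$. The reason is that the Feynman--Kac/Rayleigh argument needs a reference measure that is genuinely invariant for the full dynamics; $\nu_t^N$ is not. This forces an extra estimate $H(\P_{\mu^N}\mid\P_{\nu_{\bp_*}^N})=H(\mu^N\mid\nu_{\bp_*}^N)=O(N^{d-2a})$, obtained by comparing $\nu_0^N$ to $\nu_{\bp_*}^N$. Also, the ``projection'' half of the BG principle (bounding $E_{\nu_{\bp_*}^N}[f_x\mid\bI_x^M]$) is not handled by the spectral gap at all; it needs an equivalence-of-ensembles input (the paper borrows \cite[Proposition~7.1]{BL}) together with a concentration inequality for $\bar\bI_x^M$. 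Your description only covers the mean-zero residue half.
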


While the relative entropy of probability measures on $X_N$ is usually of size $O(N^d)$, Theorem \ref{t} deals with measures constructed from fields $\bp$ which are small perturbations of $\bp_*$. To distinguish different perturbations, the relative entropy needs to be small enough. The proof of the following Corollary demonstrates that a relative entropy of size $O(N^{d-2a})$ is critical for this.

\begin{cor} \label{c:t:L1} 
Under the assumptions of Theorem \ref{t}, for any $t \in [0,T]$ and any $\bF : \T^d \to \R^{d+1}$ continuous,
\begin{equation*}
  \lim_{N \to \infty} N^{a-d} \sum_{x\in\T_N^d} \bF \Big( \frac xN \Big) \cdot [\bI(\eta_x) - \bp_*]
  = \int_{\T^d} \bF(u) \cdot \bphi(t,u) \, du
\end{equation*}
in $L^1(\mu_t^N)$.
\end{cor}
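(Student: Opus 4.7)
The plan is to apply the classical entropy inequality with $\nu_t^N$ as the reference measure, exploiting Theorem~\ref{t} together with the product-Bernoulli structure of $\nu_t^N$. Writing $\chi_N := N^{a-d} \sum_{x \in \T_N^d} \bF(x/N) \cdot [\bI(\eta_x) - \bp_*]$ for the random statistic and $\Phi_t := \int_{\T^d} \bF(u) \cdot \bphi(t,u)\,du$ for the target, I would first split
\[
  \chi_N - \Phi_t = \big(\chi_N - E_{\nu_t^N}[\chi_N]\big) + \big(E_{\nu_t^N}[\chi_N] - \Phi_t\big).
\]
Using $E_{\nu_t^N}[\bI(\eta_x)] = \bP(\blam(t,x)) = \bp_* + N^{-a}\bphi(t,x/N)$ from \eqref{bP} and \eqref{varphik:def}, the deterministic second summand equals the Riemann-sum error $N^{-d}\sum_x \bF(x/N)\cdot\bphi(t,x/N) - \Phi_t$, which vanishes as $N \to \infty$ by continuity of $\bF$ and $\bphi$.

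The core step is to control the random term $h_N := \chi_N - E_{\nu_t^N}[\chi_N]$ in $L^1(\mu_t^N)$. Under $\nu_t^N$, $h_N$ is $N^{a-d}$ times a sum of $|\cV|\,N^d$ independent, centered, uniformly bounded random variables (one per pair $(x,v)$), so Hoeffding's lemma yields
\[
  E_{\nu_t^N}\big[e^{\gamma |h_N|}\big] \le 2\exp\big(C\gamma^2 N^{2a-d}\big)
\]
for a constant $C$ depending only on $\cV$ and $\sup_{u \in \T^d}|\bF(u)|$. Substituting this into the Gibbs variational inequality $E_{\mu_t^N}[|h_N|] \le \gamma^{-1}\log E_{\nu_t^N}[e^{\gamma|h_N|}] + \gamma^{-1}H(\mu_t^N|\nu_t^N)$ produces the three-term bound
\[
  E_{\mu_t^N}[|h_N|] \le \frac{\log 2}{\gamma} + C\gamma N^{2a-d} + \frac{H(\mu_t^N|\nu_t^N)}{\gamma}.
\]

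The final step is to pick $\gamma = \gamma_N \to \infty$ in the regime $H(\mu_t^N|\nu_t^N) \ll \gamma_N \ll N^{d-2a}$; such a scale exists because Theorem~\ref{t} provides $H(\mu_t^N|\nu_t^N) = o(N^{d-2a})$, and the assumption $a < d/(\kappa+2d)$ forces $N^{d-2a} \to \infty$. A concrete choice such as $\gamma_N := \sqrt{(H(\mu_t^N|\nu_t^N)+1)\,N^{d-2a}}$ makes all three terms vanish simultaneously, yielding $E_{\mu_t^N}[|\chi_N - \Phi_t|] \to 0$ and hence the claimed $L^1(\mu_t^N)$ convergence. I do not anticipate any serious obstacle beyond this bookkeeping: Theorem~\ref{t} is the substantive input, and the remainder is a textbook entropy-inequality argument. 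It is worth remarking that the critical scaling $o(N^{d-2a})$ of the relative entropy is exactly what is required here, because the $N^{-a}$ perturbation of $\nu_t^N$ relative to the unperturbed product-Bernoulli measure produces a natural variance scale of order $N^{d-2a}$ for the linear statistic $\chi_N$.
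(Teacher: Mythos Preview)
Your proposal is correct and follows essentially the same route as the paper: split off the deterministic Riemann-sum error, then control the centered linear statistic via the entropy inequality against $\nu_t^N$ using the product-Bernoulli structure. The only cosmetic differences are that the paper uses an elementary Taylor bound $e^x\le 1+x+\tfrac12 x^2e^{|x|}$ in place of Hoeffding's lemma and parametrizes the entropy inequality by $\gamma N^{d-2a}$ with fixed $\gamma$ (sending $N\to\infty$ first and then $\gamma\to0$), which amounts to the same optimization you perform with $\gamma_N\to\infty$.
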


\begin{proof}
Since 
\begin{align*}
\lim_{N\to\infty}N^{-d} \sum_{x\in\T_N^d} \bF \Big(\frac xN \Big) \cdot \bphi \Big(t,\frac xN \Big) \, du = \int_{\T^d} \bF(u) \cdot \bphi(t,u) \, du,
\end{align*}
it is enough to show that
\begin{align*}
f_N(\eta) := N^{a-d} \sum_{x\in\T_N^d} \bF \Big( \frac xN \Big) \cdot \left[\bI(\eta_x) - \bp_* - \frac{1}{N^a}\bphi \Big(t,\frac xN \Big) \right]
=: 
N^{a-d} \sum_{x\in\T_N^d} \bF \Big( \frac xN \Big) \cdot \bJ_x^N(\eta_x)
\end{align*}
vanishes in $L^1(\mu_t^N)$ as $N\to\infty$.

Fix $\gamma>0$. By the entropy inequality, we get
\[
  E_{\mu_t^N} \left[ | f_N(\eta) | \right]
  \leq \frac{ H(\mu_t^N | \nu_t^N) }{\gamma N^{d-2a}} + \frac{1}{\gamma N^{d-2a}} \log E_{\nu_t^N} \left[ \exp (\gamma N^{d-2a} |f_N(\eta)|) \right].
\]
Using $e^{|x|} \le e^x+e^{-x}$, we can get rid of the absolute value in the right-hand side.
Since $\{\bF (x/N) \cdot \bJ_x^N(\eta_x) \}_x$ are independent under $\nu_t^N$, we have
\begin{multline*}
 \frac{1}{\gamma N^{d-2a}} \log E_{\nu_t^N} \left[ \exp (\gamma N^{d-2a} |f_N(\eta)|) \right] \\
 = \frac{1}{\gamma N^{d-2a}} \sum_{x\in\T_N^d} \log E_{\nu_t^N} \left[ \exp \Big(\frac{\gamma}{N^{a}} \bF \Big( \frac xN \Big) \cdot \bJ_x^N(\eta_x) \Big) \right].
\end{multline*}
Note from (recall \eqref{bP} and \eqref{nutN}) $E_{\nu_t^N}[\bI(\eta_x)] = \bp_* + N^{-a}\bphi (t, x/N )$ that $E_{\nu_t^N}[\bF (x/N) \cdot \bJ_x^N(\eta_x)] =0$ and that $\left| \bF (x/N) \cdot \bJ_x^N(\eta_x) \right| \le C$  for some constant $C>0$ which depends on $\bF$. Using the inequalities $e^x \le 1+x+ \frac12 x^2e^{|x|}$ and $ \log(1+y) \le y$, the previous expression is bounded from above by
\begin{multline*}
\dfrac{1}{\gamma N^{d-2a}} \sum_{x\in\T_N^d} E_{\nu_t^N} \left[\frac{\gamma^2}{2N^{2a}} \left( \bF \Big( \frac xN \Big) \cdot \bJ_x^N(\eta_x) \right)^2 \exp \left( \frac{\gamma}{N^a} \left| \bF \Big( \frac xN \Big) \cdot \bJ_x^N(\eta_x) \right| \right) \right] \\
 \le \frac{\gamma}{2}C^2\exp\left( \frac{\gamma}{N^a}C \right).
\end{multline*}

Collecting the estimates above, we have
\begin{align*}
  E_{\mu_t^N} \left[ | f_N(\eta) | \right]
  \leq \frac{ H(\mu_t^N | \nu_t^N) }{\gamma N^{d-2a}} + \gamma C^2\exp\left( \frac{\gamma}{N^a}C \right).
\end{align*}
Letting first $N\to\infty$ and then $\gamma\to0$ we obtain Corollary \ref{c:t:L1}.
\end{proof}

\begin{rem} \label{r:t:uniq} 
If \eqref{HDL:varphi} attains a classical solution $\bphi$, then it is the unique solution among all classical solutions with the same initial condition. Indeed, this is a standard corollary of the relative entropy method, as remarked in e.g.\ \cite[p.115]{KL}.
\end{rem}

\begin{rem} \label{r:t:genl:jumps} 
Proposition \ref{p:cV} and Theorem \ref{t} hold for more general local jumps than the nearest-neighbor jumps given by $p_N$ in \eqref{pN}. Such jumps are considered e.g.\ in \cite{AGS,Sim}. The setting for more general jumps is given by
\begin{equation*}
  (L_N^{ex} f)(\eta) 
  = \sum_{v \in \cV} \sum_{ x, y\in\T_N^d } \eta_x(v) \big( 1- \eta_y(v) \big)
p_N(y-x,v) \left\{  f\left(  \eta^{x,y,v}\right)  -f\left(\eta\right)  \right\},
\end{equation*} 
where now $p_N(z,v)$ is an irreducible probability transition function in $z$ of finite range. It decomposes as
\[
  p_N(z,v) = p^s(z) + \frac{1}{N^{1-a}} p^a(z,v),
\]
where $p^s(-z) = p^s(z)$ is symmetric and $p^a(-z,v) = -p^a(z,v)$ is anti-symmetric with
\begin{equation*}
  \sum_{z \in \Z^d} z p^a (z,v) = v.
\end{equation*}
\end{rem}

\subsection{Discussion on the admissible sets $\cV$}
\label{s:sett:disc}

In this section we review the various conditions imposed on the velocity set $\cV$ in the previously mentioned \cite{AGS,BL,EMY,QY,Sim}, and compare them to our Assumptions \ref{a:AV} and \ref{a:sg}.

In the seminal work \cite{EMY}, two key assumptions are made on $\cV$:
\begin{itemize}
\item[(a)] the dynamics is ergodic among the micro-canonical ensembles with fixed total mass and momentum;
\item[(b)] $\cV$ is invariant under reflections $R_i$ and permutations $P_{ij}$ for all $1 \le i\not=j \le d$, where for $v=(v_1,\ldots,v_d)\in\cV$,
\begin{equation*}
  R_iv:=v-2v_ie_i, \quad P_{ij}v:=v-(v_i-v_j)(e_i-e_j).
\end{equation*}
\end{itemize}
In addition, all $v\in\cV$ are required to be of equal length.
The ergodicity in (a) is necessary for the macroscopic mass-momentum vector to evolve with a closed system of equations. The coefficients in these equations can be computed explicitly due to the symmetries in $\cV$ imposed in (b).

To demonstrate the existence of non-trivial sets $\cV$ which satisfy these assumptions, \cite{EMY} provides two explicit examples:
\begin{itemize}
  \item Model I: $d\ge1$, $\cV = \{\pm e_i; i=1,\ldots,d\}$, and
  \item Model II: $d=3$, $\cV$ generated by $(1,1,\varpi)$ under all $R_i$ and $P_{ij}$, where $\varpi$ is the positive root of $\varpi^4 - 6 \varpi^2 - 1 = 0$.
\end{itemize}
In particular, the incompressible Navier--Stokes equation is recovered by Model II.

Conditions (a) and (b) have become the standard setting for hydrodynamic and incompressible limits for this model. Indeed, \cite{AGS,QY,Sim} impose the same assumptions. In \cite{BL}, condition (a) is replaced by a quantitative ergodicity, namely a spectral gap estimate.
However, it is verified only for Model I.

In the present paper, the incompressible limit is proved under Assumptions \ref{a:AV} and \ref{a:sg}.
One of our main contributions is that (b) is not required.
In addition, we extend the spectral gap estimate to a significantly wider class of $\cV$ which does not necessarily fulfill any invariance in (b) (recall Proposition \ref{p:cV}).
Hence, our incompressible limit also holds for velocity sets with strong asymmetry.
However, the technique we used to proof the spectral gap estimate relies on the following property: $\cV$ can be decomposed as a union of \textit{disjoint} pairs $(v,v')$ such that collisions can only happen between particles with velocities that belong to the same pair.
Hence, our result does not cover sets $\cV$ with more complicated collision sets, such as the one in Model II.
Nevertheless, we believe that also for such sets $\cV$ a certain polynomial spectral gap estimate holds.

\section{Proof of Theorem \ref{t}: the relative entropy method}
\label{s:ent:prod}

In this section, we prove Theorem \ref{t} by applying Yau's relative entropy method. In Section \ref{s:ent:prod:not} we introduce convenient notation. In Section \ref{s:ent:prod:rel} we perform preliminary computations. Sections \ref{s:ent:prod:comp1} and \ref{s:ent:prod:comp2} contain the main computations of the entropy production. Section \ref{s:ent:prod:BG} contains the crucial step; the application of Theorem \ref{t:BG} (a Boltzmann-Gibbs principle). We state and prove Theorem \ref{t:BG} in Section \ref{s:BG} below. 

\subsection{Notation}
\label{s:ent:prod:not} 

We always assume that $N$ is large enough with respect to the solution $\bphi$ from Theorem \ref{t} and such that $N^{2a - d} H(\mu^N | \nu_0^N) > 0$ is as small as desired.

To avoid clutter in the computations that follow, we introduce short-hand notation. First, sums over $x,y,z$ will always be understood as summing over $\T_N^d$, unless mentioned otherwise. Likewise, sums over $v,w$  always run over $\cV$ and sums over $\eta$ run over $X_N$. Second, we abbreviate several objects as in Table \ref{tab:not}. All this notation is defined from $\bphi, \bp_*, a$ (recall Theorem \ref{t} and the paragraph above it) and the functions $\bLam, \theta, \chi, \bI$ (recall respectively Proposition \ref{p:nutN}, \eqref{theta}, \eqref{HDL:rhop}, \eqref{bI}). Two useful relations between the symbols in Table \ref{tab:not} are
\begin{equation*} 
  \bp_x = \bP (\blam_x) = \sum_v \bv \theta_x,
  \qquad  \bar \bI_x = \sum_v \bv \bar \eta_x.
\end{equation*}
We stress that $\eta_x$ is short-hand notation for $\eta_x(v) \in \{0,1\}$, which differs from the meaning of $\eta_x$ in Section \ref{s:setting}.

\begin{table}[ht]
\centering
\begin{tabular}{clccccc}
\toprule
symbol & definition & \multicolumn{5}{c}{depends on} \\
 \cmidrule(lr){3-7}
 & & $t$ & $x$ & $N$ & $v$ & $\eta$ \\
\midrule
$\bphi(t, \cdot)$ & Thm.\ \ref{t} & $\bigcirc$ & $\times$ & $\times$ & $\times$ & $\times$ \\ 
 $\bphi_x$ & $\bphi(t, \tfrac xN)$ & $\bigcirc$ & $\bigcirc$ & $\bigcirc$ & $\times$ & $\times$ \\
 $\bp_x$ & $\bp_* + N^{-a} \bphi_x$ & $\bigcirc$ & $\bigcirc$ & $\bigcirc$ & $\times$ & $\times$ \\
 $\blam_x$ & $\bLam (\bp_x)$ & $\bigcirc$ & $\bigcirc$ & $\bigcirc$ & $\times$ & $\times$ \\
 $\theta_x$ & $\theta(\blam_x \cdot \bv)$ & $\bigcirc$ & $\bigcirc$ & $\bigcirc$ & $\bigcirc$ & $\times$ \\
 $\chi_x$ & $\chi(\theta_x)$ & $\bigcirc$ & $\bigcirc$ & $\bigcirc$ & $\bigcirc$ & $\times$ \\
 $\eta_x$ & $\eta_x(v)$ & ? & $\bigcirc$ & $\bigcirc$ & $\bigcirc$ & $\bigcirc$ \\
 $\bar \eta_x$ & $\eta_x - \theta_x$ & $\bigcirc$ & $\bigcirc$ & $\bigcirc$ & $\bigcirc$ & $\bigcirc$ \\
 $\bI_x$ & $\bI(\eta_x)$ & ? & $\bigcirc$ & $\bigcirc$ & $\times$ & $\bigcirc$ \\
 $\bar \bI_x$ & $\bI_x - \bp_x$ & $\bigcirc$ & $\bigcirc$ & $\bigcirc$ & $\times$ & $\bigcirc$ \\
 \bottomrule 
\end{tabular}
\caption{Short-hand notation used in Section \ref{s:ent:prod}. In chronological order the symbols are defined in terms of the objects $\bphi, \bp_*, a, \bLam, \theta, \chi, \bI$ introduced in Section \ref{s:setting}. $\eta \in X_N$ is treated as a sample; dependence on $\eta$ indicates that the symbol is a random variable. The dependence of $\eta$ on $t$ depends on how $\eta$ is distributed (e.g.\ by $\mu_\blam^N$, $\nu_t^N$ or $\mu_t^N$). \label{tab:not}}
\end{table}

\subsection{Preliminary computations}
\label{s:ent:prod:rel}

Consider the setting as in Proposition \ref{p:nutN}, where we recall 
\[
  \bLam = \bP^{-1}, \qquad 
  \bP(\blam) = \sum_v \bv \theta(\blam \cdot \bv), \qquad
  \bp_* = \bP(\bzero), \qquad
  \theta(\blam \cdot \bv) = \frac{e^{\blam \cdot \bv}}{1 + e^{\blam \cdot \bv}}.
\]

\begin{lem} \label{l:bLam:props}
The following hold:
\begin{align} \label{bLam1}
  \bLam (\bp_*) &= \bzero, \\\label{bLam3}
  \nabla \bLam (\bp_*) &= 4 \Big[\sum_v \bv \otimes \bv\Big]^{-1} = 2 A_\cV, \\\label{bLam4}
  \nabla^2 \bLam (\bp_*) &= O \in \R^{(d+1)^3}.
\end{align}
\end{lem}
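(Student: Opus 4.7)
The plan is to prove the three identities in order, each time relying on the previous one, and to use implicit differentiation of the identity $\bLam(\bP(\blam)) = \blam$ for the two derivative formulas.

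First, \eqref{bLam1} is immediate from the definitions: $\bp_* = \bP(\bzero)$ by \eqref{bpstar} and $\bLam = \bP^{-1}$, so $\bLam(\bp_*) = \bzero$. Next, I would compute the first derivative of $\bP$ directly from \eqref{bP}. Differentiating under the sum gives $\partial_{\lambda_i} P_j(\blam) = \sum_v v_i v_j \, \theta'(\blam\cdot\bv)$, where I use the convention $v_0 = 1$. Since $\theta(\alpha) = e^\alpha/(1+e^\alpha)$, we have $\theta'(\alpha) = \chi(\theta(\alpha))$, so $\theta'(0) = \chi(1/2) = 1/4$. Therefore $\nabla \bP(\bzero) = \tfrac{1}{4} \sum_v \bv\otimes\bv$, which is invertible by Assumption \ref{a:AV}. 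The inverse function theorem, applied to $\bLam\circ\bP = \mathrm{id}$ at $\blam=\bzero$, yields
\begin{equation*}
  \nabla \bLam(\bp_*) = [\nabla\bP(\bzero)]^{-1} = 4\Big[\sum_v \bv\otimes\bv\Big]^{-1} = 2 A_\cV,
\end{equation*}
which is \eqref{bLam3}; the last equality is just the definition \eqref{AV} of $A_\cV$.

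For \eqref{bLam4}, the key observation is that $\theta$ satisfies the symmetry $\theta(-\alpha) = 1 - \theta(\alpha)$, so $\theta - \tfrac12$ is an odd function of its argument; in particular $\theta''(0) = 0$. Differentiating $P_j(\blam) = \sum_v v_j \theta(\blam\cdot\bv)$ twice gives $\partial^2_{\lambda_i\lambda_l} P_j(\blam) = \sum_v v_i v_l v_j \, \theta''(\blam\cdot\bv)$, which therefore vanishes at $\blam = \bzero$, i.e.\ $\nabla^2 \bP(\bzero) = O$. Now differentiate the identity $\bLam(\bP(\blam)) = \blam$ twice in $\blam$, which yields for each component $k$ and indices $i,l$
\begin{equation*}
  \sum_{m,n} \partial_{p_m}\partial_{p_n} \Lambda_k(\bP(\blam)) \, \partial_{\lambda_i} P_m(\blam) \, \partial_{\lambda_l} P_n(\blam) + \sum_m \partial_{p_m} \Lambda_k(\bP(\blam)) \, \partial^2_{\lambda_i\lambda_l} P_m(\blam) = 0.
\end{equation*}
Evaluating at $\blam=\bzero$, the second sum vanishes by what we just showed, so the first sum must vanish as well. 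Since $\nabla \bP(\bzero)$ is invertible, this forces $\partial_{p_m}\partial_{p_n} \Lambda_k(\bp_*) = 0$ for all $k,m,n$, giving \eqref{bLam4}.

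The computations are essentially mechanical; the only nontrivial ingredient is the identification $\theta''(0) = 0$, which comes from the built-in symmetry of the Fermi--Dirac function $\theta$ in \eqref{theta}. I do not foresee any real obstacle: the main thing to be careful about is keeping indices consistent when differentiating the composition $\bLam\circ\bP$, and using Assumption \ref{a:AV} at the right moment to justify inverting $\nabla\bP(\bzero)$.
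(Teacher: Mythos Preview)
Your proposal is correct and follows essentially the same route as the paper: both compute $\nabla\bP(\bzero)=\tfrac14\sum_v\bv\otimes\bv$ and $\nabla^2\bP(\bzero)=O$ via $\theta'(0)=\tfrac14$, $\theta''(0)=0$, and then differentiate the identity $\bLam\circ\bP=\mathrm{id}$ once and twice at $\blam=\bzero$, using the invertibility of $\nabla\bP(\bzero)$ to extract \eqref{bLam3} and \eqref{bLam4}. The only cosmetic difference is that the paper obtains $\theta''(0)=0$ from the explicit formula $\theta''=(1-2\theta)\theta'$ rather than from the oddness of $\theta-\tfrac12$.
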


\begin{proof} \eqref{bLam1} follows directly from \eqref{bpstar}. \eqref{bLam3} and \eqref{bLam4} require some computations. First, 
\begin{align} \label{vart:der}
  \theta'(\alpha) 
  &= \frac{e^{\alpha}}{(1 + e^{\alpha})^2}
  = \theta(\alpha) (1 - \theta(\alpha))
  = \chi (\theta(\alpha)), \\\notag
  \theta''(\alpha) &= (\chi \circ \theta)'(\alpha) = (1 - 2\theta(\alpha)) \theta'(\alpha).
\end{align}
In particular, we obtain
\begin{equation} \label{vart:0}
  \theta(0) = \frac12, \qquad
  \theta'(0) = \frac14, \qquad
  \theta''(0) = 0.
\end{equation}
Second, we compute
\begin{align*}
  \nabla \bP(\blam) &= \sum_v \bv \otimes \bv \frac{e^{\blam \cdot \bv}}{(1 + e^{\blam \cdot \bv})^2}, \\
  \partial_k \nabla \bP(\blam) 
  &= \sum_v \bv \otimes \bv v_k e^{\blam \cdot \bv} \frac{1 - e^{2\blam \cdot \bv}}{(1 + e^{\blam \cdot \bv})^4}.
\end{align*}
In particular, we obtain
\begin{align*}
  \bP(\bzero) &= \frac12 \sum_v \bv = \bp_*, \\
  \nabla \bP(\bzero) &= \frac14 \sum_v \bv \otimes \bv, \\
  \partial_k \nabla \bP(\bzero) &= O \in \R^{(d+1) \times (d+1)} \qquad \text{for all } k = 0,\ldots,d.
\end{align*}

Next we prove \eqref{bLam3} and \eqref{bLam4}. By using basic calculus we obtain 
\begin{equation} \label{pfzt}
  I 
  = \nabla ( \bLam \circ \bLam^{-1} )(\blam) 
  = \nabla ( \bLam \circ \bP )(\blam)
  = (\nabla \bLam (\bP (\blam)))^T \, \nabla \bP (\blam),
\end{equation} 
where $I$ is the identity matrix and $|\blam|$ is small enough such that $\bP(\blam)$ is in the domain of $\bLam$. Substituting $\blam = 
\bzero$, we obtain \eqref{bLam3} from
\[
  \nabla \bLam (\bp_*) 
  = \nabla \bLam (\bP(\bzero))
  = [\nabla \bP (\bzero)]^{-T}.
\]
To prove \eqref{bLam4}, take any $k$. Taking $\partial_k$ in \eqref{pfzt} yields
\[
  O 
  = \sum_{l=0}^d \partial_k P_l (\blam) \big( (\partial_l \nabla \bLam) (\bP (\blam)) \big)^T \nabla \bP (\blam)
    + (\nabla \bLam (\bP (\blam)))^T \, \partial_k \nabla \bP (\blam)
\]
for $|\blam|$ small enough.
Substituting $\blam = \bzero$ and using $\partial_k \nabla \bP (\bzero) = O$  and that $\nabla \bP (\bzero)$ is an invertible matrix, we obtain
\begin{align} \label{pfyg}
  O 
  = \sum_{l=0}^d \partial_k P_l (\bzero) (\partial_l \nabla \bLam) (\bp_*).
\end{align}
Since $\nabla \bP (\bzero)$ is invertible, its column vectors $u_k := \partial_k \bP (\bzero)$ form a basis of $\R^{d+1}$. Then, interpreting $\nabla^2 \bLam (\bp_*)$ as a linear map from $\R^{d+1}$ to $\R^{(d+1) \times (d+1)}$, it follows from \eqref{pfyg} that the kernel of $\nabla^2 \bLam (\bp_*)$ is the full space $\R^{d+1}$. \eqref{bLam4} follows.
\end{proof}

Next, we prepare several expansions. Let $t,x,v$ be given. Here and henceforth, the $O(N^{\alpha})$-terms will always be uniform in $t,x$.
First, we use Lemma \ref{l:bLam:props} to expand \comm{[uniformity Checked on 5Nov]}
\begin{align} \label{pfyf}
  \blam_x
  &= \bLam(\bp_* + N^{-a} \bphi_x)
  = 2 N^{-a} A_\cV \bphi_x + O(N^{-3a}), \\ \label{pfzb}
  \nabla \bLam(\bp_x)
  &= \nabla \bLam(\bp_* + N^{-a} \bphi_x)
  = 2 A_\cV + O(N^{-2a}).
\end{align}
Second, we write $\theta_y = \theta (\bLam (\bp_y) \cdot \bv) =: g(\bp_y)$ and expand $g$ around $\bp_x$. Noting that $g(\bp_x) = \theta_x$ and 
\[
  \nabla g(\bp_x) 
  = \theta'(\blam_x \cdot \bv) (\nabla \bLam(\bp_x))^T \bv
  = \chi_x (\nabla \bLam(\bp_x))^T \bv,
\]  
we obtain (using \eqref{pfzb} and $a < \frac12$)
\begin{align} \notag
  \theta_y - \theta_x
  &= \chi_x \bv \cdot \nabla \bLam(\bp_x) (\bp_y - \bp_x) + O(|\bp_y - \bp_x|^2) \\ \label{pfzg}
  &= N^{-a}\chi_x \bv \cdot \nabla \bLam(\bp_x) (\bphi_y - \bphi_x) + O(N^{-2a} |\bphi_y - \bphi_x|^2) \\ \label{pfyk}
  &= 2 N^{-1-a}\chi_x \bv \cdot A_\cV \nabla \bphi_x (y-x) + O(N^{-1-3a}) \\ \label{pfyy}
  &= O(N^{-1-a}).
\end{align}
Third, recalling \eqref{vart:0} and using \eqref{pfyf} we obtain
\begin{align*}
  \theta_x 
  &= \theta (\blam_x \cdot \bv)
  = \theta(0) + \theta'(0) \blam_x \cdot \bv + \frac12 \theta''(0) (\blam_x \cdot \bv)^2 + O(N^{-3a}) \\
  &= \frac12 + \frac{N^{-a}}2 \bv \cdot A_\cV \bphi_x + O(N^{-3a}),
\end{align*}
and thus
\begin{align} \label{pfzf}
  1 - 2\theta_x
  &= - {N^{-a}} \bv \cdot A_\cV \bphi_x + O(N^{-3a}), 
  \\ \label{pfze}
  \chi_x 
  &= \theta_x (1-\theta_x) 
  = \frac14 + O(N^{-2a}). 
\end{align}

\subsection{Entropy production}
\label{s:ent:prod:comp1}

To estimate the entropy production, we apply Yau's entropy inequality: 
\begin{equation} \label{pfzu} 
  \frac d{dt}  H(\mu_t^N | \nu_t^N)
  \leq \int L_N f_t^N \, d\nu_t^N - \int \partial_t \log \nu_t^N \, d\mu_t^N,
  \qquad f_t^N := \frac{d \mu_t^N}{d \nu_t^N}.
\end{equation}
This inequality is classical; see e.g.\ \cite[p.\ 120 and p.\ 342]{KL}. 
We compute and estimate both terms in the right-hand side of \eqref{pfzu} separately. The goal is to obtain 
\begin{multline} \label{pfzq}
  \int_0^T \frac d{dt}  H(\mu_t^N | \nu_t^N) \, dt
  \leq
  N^{-a} \int_0^T  \sum_x  E_{\mu_t^N} \left[\bar \bI_x\right] \cdot A_\cV \cK [\bphi] \Big(t,\frac xN \Big) \, dt  \\
   + C \int_0^T  H(\mu_t^N | \nu_t^N) \, dt + O(N^{d-2a-\e_0}),
\end{multline}
where $\e_0 > 0$ is a constant and $\cK [\bphi](t, \frac xN) = \bzero \in \R^{d+1}$ is short-hand notation for the incompressible limit \eqref{HDL:varphi}. Theorem \ref{t} follows from \eqref{pfzq} by a standard application of Gronwall's Lemma. 

We start with the $\partial_t$-term in \eqref{pfzu}. Since $\nu_t^N = \mu_{\blam(t, \cdot)}^N$ is the Bernoulli product measure on $\{0,1\}^{\T_N ^d \times \cV}$ with varying parameter $\theta_x$, a simple computation (see e.g.\ \cite[Lemma 4.6]{Fun}) shows that
\begin{align*}  
  \partial_t \log \nu_t^N (\eta)
  &= \sum_{xv} \partial_t \theta_x \frac{\bar \eta_x}{\chi_x}.
\end{align*}
Computing the time derivative, we get (recall \eqref{vart:der}) 
\[
  \partial_t \theta_x 
  = \theta'(\blam_x \cdot \bv) \bv \cdot \partial_t \blam_x
  = N^{-a }\chi_x \bv \cdot \nabla \bLam (\bp_x) \partial_t \bphi_x.
\]
Hence, using \eqref{pfzb} 
\begin{align*} 
  \int \partial_t \log \nu_t^N \, d \mu_t^N
  &= N^{-a} \sum_x E_{\mu_t^N} \bigg[ \sum_{v} \bar \eta_x \bv \bigg] \cdot  \nabla \bLam (\bp_x) \partial_t \bphi_x
\\
  &= 2 N^{-a} \sum_{x}  E_{\mu_t^N} \left[ \bar \bI_x \right] \cdot A_\cV \partial_t \bphi_x + O(N^{d-3a}),  
\end{align*}
which, apart from the error term, is part of the integrand of the first term on the right-hand side of \eqref{pfzq}. 
 
Next we treat the $L_N$ part in \eqref{pfzu}. Similar to \eqref{mulam:bI} it follows that $\nu_t^N$ is a function of $(\bI(\eta_x))_x$. Then, it follows from Lemma \ref{l:LNc:0} that $L_N^c \nu_t^N = 0$. Hence,
\begin{equation} \label{pfza}
  \int L_N f_t^N \, d\nu_t^N
  = N^2 \sum_{\eta xyv} \eta_x (1 - \eta_y) p_N(y-x) \big[ f_t^N(\eta^{x,y,v}) - f_t^N(\eta) \big] \nu_t^N (\eta).
\end{equation}
Unlike the $\partial_t$-term, we cannot rewrite this directly into the desired form \eqref{pfzq}. Instead, our strategy is first to write it in the form
\begin{equation} \label{pfyz}
  \sum_{ v} E_{\mu_t^N} \left[ \sum_{ x} \eta_x a_{t,x,v}^N \right] + \sum_{v, |z|=1} E_{\mu_t^N} \left[\sum_{ x} \eta_x \eta_{x+z} a_{t,x,z,v}^N \right]
\end{equation}
for deterministic coefficients $a_{t,x,v}^N, a_{t,x,z,v}^N \in \R$ which are uniformly bounded, and then apply the Boltzmann--Gibbs principle in Theorem \ref{t:BG} to replace $\eta_x$ and $\eta_x \eta_{x+z}$ by terms which only depend on $\eta$ through $\bar \bI_x$. This replacement will produce the error term given by the latter two terms in the desired estimate \eqref{pfzq}. 

With this plan in mind, we first rewrite \eqref{pfza} such that all appearances of $f_t^N$ and $\nu_t^N$ can be replaced by $\mu_t^N = f_t^N \nu_t^N$.
Regarding the part of the summand involving $f_t^N(\eta^{x,y,v})$, we change variables $\zeta = \eta^{x,y,v}$ and swap $x$ and $y$. This yields
\begin{equation*} 
  \int L_N f_t^N \, d\nu_t^N
  = N^2 \sum_{\eta xyv} \eta_x (1 - \eta_y)  \big[ p_N(x-y) \nu_t^N(\eta^{x,y,v}) - p_N(y-x) \nu_t^N(\eta) \big] f_t^N (\eta).
\end{equation*}
Using that $\eta_x (1 - \eta_y)$ vanishes unless $\eta_x = 1$ and $\eta_y = 0$, and that $\nu_t^N$ is the Bernoulli product measure on $\{0,1\}^{\T_n^d \times \cV}$ with varying parameter $\theta_x$, we rewrite
\begin{equation*} 
  \eta_x (1 - \eta_y) \nu_t^N(\eta^{x,y,v})
  = \eta_x (1 - \eta_y) \frac{\theta_y (1 - \theta_x)}{\theta_x (1 - \theta_y)} \nu_t^N(\eta).
\end{equation*}
Then, substituting $p_N(x-y) = 1 + N^{a-1} v \cdot (x-y)$ we get
\begin{align} \notag 
  &\int L_N f_t^N \, d\nu_t^N \\\notag
  &= N^2 \sum_{\eta xyv} \frac{ \eta_x (1 - \eta_y) }{\theta_x (1 - \theta_y)} \underbrace{ \Big( {\theta_y - \theta_x} + N^{a-1} (v \cdot  (x-y))(\theta_x + \theta_y - 2 \theta_x \theta_y) \Big) }_{ =: b_{x,y}} \underbrace{ \nu_t^N(\eta) f_t^N (\eta) }_{\mu_t^N(\eta)} \\\label{pfzl}
  &= \sum_{v} E_{\mu_t^N} \left[ \sum_{ x} \eta_x \Big( N^2 \sum_y \frac{b_{x,y}}{\theta_x (1 - \theta_y)} \Big) \right] - \sum_{v} E_{\mu_t^N} \left[ \sum_{ xy} N^2\frac{\eta_x \eta_y}{\theta_x (1 - \theta_y)} b_{x,y} \right].
\end{align}
While both terms are already in the desired form \eqref{pfyz}, it turns out convenient to rewrite the latter. We exploit the symmetry in $\sum_{xy}$ to remove summands that are anti-symmetric in $x,y$. The product $\eta_x \eta_y$ is symmetric and $b_{x,y}$ is anti-symmetric. We split the remaining part into its symmetric and anti-symmetric parts as 
\[
  \frac1{\theta_x (1 - \theta_y)} 
  = \frac{\theta_y (1 - \theta_x)}{\chi_x \chi_y} 
  = \frac{\theta_y + \theta_x - 2 \theta_y \theta_x}{2 \chi_x \chi_y} + \frac{\theta_y - \theta_x}{2 \chi_x \chi_y}.
\]
Hence,
\begin{equation*}
  \sum_{ xy } \frac{\eta_x \eta_y}{\theta_x (1 - \theta_y)} b_{x,y}
  = \sum_{ xy } \eta_x \eta_y \frac{ (\theta_y - \theta_x) b_{x,y} }{2 \chi_x \chi_y}
\end{equation*}
and thus
\begin{equation} \label{pfyw}
  \int L_N f_t^N \, d\nu_t^N
  = \sum_v E_{\mu_t^N} \left[ \sum_{ x} \eta_x a_x \right] + \sum_{v, |z|=1} E_{\mu_t^N} \left[ \sum_{ x} \eta_x \eta_{x+z} a_{x,z} \right], 
\end{equation}
where
\begin{equation} \label{pfxm}
  a_x := N^2 \sum_y \frac{b_{x,y}}{\theta_x (1 - \theta_y)},
  \qquad a_{x,z} := \frac{ N^2 (\theta_x - \theta_{x+z}) b_{x,x+z} }{2 \chi_x \chi_{x+z}}.
\end{equation}

\subsection{Application of the Boltzmann-Gibbs principle} 
\label{s:ent:prod:BG}

Integrating \eqref{pfyw} over $t$ yields
\begin{equation} \label{pfyb}
  \int_0^T \int L_N f_t^N \, d\nu_t^N \, dt
  = \sum_v \E_{\mu^N} \left[  \int_0^T \sum_{ x} a_x \eta_x  \, dt \right] + \sum_{v, |z|=1} \E_{\mu^N} \left[ \int_0^T \sum_{ x} a_{x,z} \eta_x \eta_{x+z} \, dt \right].
\end{equation}

The Boltzmann--Gibbs principle in Theorem \ref{t:BG} states that for any local function $h:(\{0,1\}^{\cV})^{\Z^d}\to\R$ and for any uniformly bounded deterministic coefficients $a_x' = a_x'(t,N)$ we have 
\begin{align} \label{BG:summary}
  \E_{\mu^N} \left[ \int_0^T \sum_{ x} a_x' \tau_x h(\eta) \, dt \right]
  = \int_0^T  \sum_{ x} a_x' \tilde h(\bp_x) \,dt
   + \int_0^T  \sum_{ x} a_x' \nabla \tilde h(\bp_{t,x}) \cdot E_{\mu_t^N} \left[  \bar \bI_x \right] \,dt + \e_{BG},
\end{align} 
where $\tilde h(\bp) := E_{\nu_{\bp}^N}[h]$, $\tau_x$ is the usual shift operator with $\tau_x h (\eta) := h (\tau_x \eta)$ and $(\tau_x \eta)_y := \eta_{x+y}$, and 
\[
  \e_{BG} := C_0 \int_0^T H(\mu_t^N|\nu_t^N) \,dt + O (N^{d-2a-\e_0})
\]
is the error term, where $\e_0 \in (0,a]$ and $C_0 > 0$ are constant; $\e_0$ only depends on $d,a,\kappa$ and $C_0$ is independent of $N$. To apply \eqref{BG:summary} to \eqref{pfyb} we take $a_x$ and $a_{x,z}$ as the coefficients $a_x'$ and
\begin{align*}
  h(\eta) := \eta_0(v) \quad \text{and} \quad h_z(\eta) := \eta_0(v) \eta_z(v)
 \end{align*}  
 as the corresponding local functions. This requires that there exists a constant $C > 0$ such that
\begin{equation} \label{pfyx}
  |a_x| + |a_{x,z}| \leq C \qquad \text{for all } x,z,v,t,N,
\end{equation}
which we prove at the end of this subsection. 

As preparation we compute for a general variable $\bp$ close enough to $\bp_*$ (recall the product structure of $\mu_\blam^N$ in \eqref{mulam:prod})
\begin{align*}
   \tilde h(\bp)
   &= E_{\mu_{\bLam(\bp)}^N} [\eta_0(v)]
   = \frac{ e^{\bLam(\bp) \cdot \bv} }{1 + e^{\bLam(\bp) \cdot \bv}} 
   = \theta (\bLam(\bp) \cdot \bv ), \\
   \nabla \tilde h(\bp)
   &= \theta' (\bLam(\bp) \cdot \bv ) (\nabla \bLam(\bp))^T \bv. 
\end{align*} 
Substituting $\bp = \bp_x$ we get (recall \eqref{vart:der})
\begin{align*}
   \tilde h(\bp_x)
   = \theta_x, \qquad
   \nabla \tilde h(\bp_x) 
   =  \chi_x (\nabla \bLam(\bp_x))^T \bv. 
 \end{align*} 
Similarly, 
\begin{align*}
   \tilde h_z(\bp)
   &= E_{\mu_{\bLam(\bp)}^N} [\eta_0(v) \eta_z(v)]
   = \tilde h(\bp)^2
   = \theta (\bLam(\bp) \cdot \bv )^2, \\
   \nabla \tilde h_z(\bp)
   &= 2 \theta (\bLam(\bp) \cdot \bv ) \theta' (\bLam(\bp) \cdot \bv ) (\nabla \bLam(\bp))^T \bv. 
\end{align*} 
Substituting $\bp = \bp_x$ we get
\begin{align*}
   \tilde h_z(\bp_x)
   = \theta_x^2, \qquad
   \nabla \tilde h_z(\bp_x) 
   = 2 \theta_x \chi_x (\nabla \bLam(\bp_x))^T \bv. 
 \end{align*} 
Note that both expressions are independent of $z$. 
 
With these preparations we apply \eqref{BG:summary} to \eqref{pfyb}. This yields
\begin{multline} \label{pfyv}
  \int_0^T \int L_N f_t^N \, d\nu_t^N \, dt
  = \int_0^T  \sum_{xv}  \Big( \theta_x a_x  + \theta_x^2 \sum_{|z|=1} a_{x,z} \Big) dt \\
    + \int_0^T  \sum_{xv} \bv \cdot \nabla \bLam(\bp_x) E_{\mu_t^N} \left[ \bar \bI_x  \right] \chi_x \Big( a_x + 2 \theta_x \sum_{|z|=1} a_{x,z} \Big) \,dt + \e_{BG}
\end{multline}
for a different, larger constant $C_0$ in the expression of $\e_{BG}$.

It is left to prove the a priori bound \eqref{pfyx} on the coefficients $a_x$, $a_{x,z}$ (recall \eqref{pfxm}). From \eqref{pfyy} we obtain $a_{x,z} = O(1)$. To see that $a_x = O(1)$, we use the identity
\[  
  \frac1{1-\theta_y} - \frac1{1-\theta_x}
  = \frac{\theta_y - \theta_x}{(1-\theta_y)(1-\theta_x)}.
\]
to replace $1-\theta_y$ in the denominator in \eqref{pfxm}. This yields
\begin{equation*}
  a_x
  = N^2 \sum_y \frac{b_{x,y}}{\chi_x} \Big( 1 + \frac{\theta_y - \theta_x}{1-\theta_y} \Big).
\end{equation*}
As we saw for $a_{x,z}$, the second part in the parentheses yields an $O(1)$ contribution to $a_x$. Writing out $b_{x,y}$, we get
\begin{equation*}
  a_x
  = \frac{N^2}{\chi_x} \sum_y (\theta_y - \theta_x) + \frac{N^{1+a}}{\chi_x} \sum_y (v \cdot  (x-y))(\theta_x + \theta_y - 2 \theta_x \theta_y) + O(1).
\end{equation*}
For the first part, we obtain from \eqref{pfzg} that
\begin{equation*}
  \sum_y (\theta_y - \theta_x)
  = N^{-a}\chi_x \bv \cdot \nabla \bLam(\bp_x) \sum_y (\bphi_y - \bphi_x) + O(N^{-2-2a})
  = O(N^{-2-a}).
\end{equation*}
Then, changing variables to $z = x-y$ and using that $v \cdot z$ is odd in $z$, we obtain
\begin{align*}
  a_x
  &= \frac{N^{1+a}}{\chi_x} \sum_z (v \cdot z)(\theta_x + \theta_{x-z}(1 - 2 \theta_x )) + O(1) \\
  &= \frac{N^{1+a}}{2\chi_x} \sum_z (v \cdot z)(\theta_{x-z} - \theta_{x+z})(1 - 2 \theta_x) + O(1),
\end{align*}
which by \eqref{pfzg} is $O(1)$. 
 
\subsection{Continuation of the entropy production computation} 
\label{s:ent:prod:comp2}
 
We continue the computation from \eqref{pfyv}. The integrand of the first integral in the right-hand side equals (using again the symmetry in the sum $\sum_{xy}$ to cancel out the anti-symmetric part of the summand)
\begin{align*}
  &N^2 \sum_{ xyv} \frac{b_{x,y}}{2 \chi_x \chi_y} \big( 2 \theta_x \theta_y (1-\theta_x) + \theta_x^2 (\theta_x - \theta_y) \big) \\
  &= N^2 \sum_{ xyv} \frac{b_{x,y}}{2 \chi_x \chi_y} \Big( \theta_x \theta_y (\theta_y-\theta_x) + \frac{\theta_x^2 + \theta_y^2 }2 (\theta_x - \theta_y) \Big) \\
  &= N^2 \sum_{ xyv} \frac{b_{x,y}}{4 \chi_x \chi_y} (\theta_x - \theta_y)^3, 
\end{align*}
which by \eqref{pfyy} is $O(N^{d-1-3a})$. Hence,
\begin{multline} \label{pfxn}
  \int_0^T \int L_N f_t^N \, d\nu_t^N \,dt \\
  = \int_0^T  N^2  \sum_{ xv} \bv \cdot \nabla \bLam(\bp_x) E_{\mu_t^N} \left[ \bar \bI_x \right] \sum_y b_{x,y} \Big( \frac{1 - \theta_x}{1 - \theta_y} + \theta_x \frac{ \theta_x - \theta_y }{\chi_y} \Big)  \,dt + O(N^{d-3a}) + \e_{BG}.
\end{multline}
Note for the summand over $y$ that
\begin{align} \notag
  \frac{1 - \theta_x}{1 - \theta_y} + \theta_x \frac{ \theta_x - \theta_y }{ \chi_y} =  1 + \frac{(\theta_x - \theta_y)^2 }{ \chi_y}.
\end{align}
Since by \eqref{pfyy} we have $
  b_{x,y} = O(N^{a-1})
$,
it follows from $a \leq \frac 12$ that the integrand in the right-hand side of \eqref{pfxn} equals
\begin{equation} \label{pfzk}
  N^2 \sum_{ xv} \bv \cdot \nabla \bLam(\bp_x) E_{\mu_t^N} \left[ \bar \bI_x \right] \sum_y b_{x,y} + O(N^{d-3a}).
\end{equation}

Next, we substitute the expression for $b_{x,y}$ (recall \eqref{pfzl}), and split the sum into the two parts corresponding to the two parts of $b_{x,y}$. For the part corresponding to $(\theta_y - \theta_x)$, we obtain from $\sum_v \bv \theta_x = \bLam^{-1}(\blam_x) = \bp_x$ and the observation that $\bar \bI_x$ and $\bp_x$ do not depend on $v$ that
\begin{align*}
  N^2 \sum_{xyv} 
  (\theta_y - \theta_x) \bv \cdot \nabla \bLam(\bp_x) \left[ E_{\mu_t^N} \bar \bI_x \right]
  &= N^2 \sum_{xy} 
  (\bp_y - \bp_x) \cdot \nabla \bLam(\bp_x) \left[ E_{\mu_t^N} \bar \bI_x \right] \\
  &= N^{-a} \sum_{x} 
  \Delta_N \bphi_x \cdot \nabla \bLam(\bp_x) \left[ E_{\mu_t^N} \bar \bI_x \right].
\end{align*}
Since $\bphi$ is $3$ times continuously differentiable in space, we have $\Delta_N \bphi_x = \Delta \bphi_x + O(N^{-1})$. Together with $a \leq \frac12$ and the expansion \eqref{pfzb}, we obtain
\begin{equation*}
  N^{-a} \sum_{x} 
  \Delta_N \bphi_x \cdot \nabla \bLam(\bp_x) \left[ E_{\mu_t^N} \bar \bI_x \right]
  = 2 N^{-a} \sum_{x} 
  \Delta \bphi_x \cdot A_\cV \left[ E_{\mu_t^N} \bar \bI_x \right] + O(N^{d- 3a}).
\end{equation*}

For the remaining part of \eqref{pfzk} we use $\sum_y (x-y) = 0$ to obtain
\begin{multline} \label{pfzc}
  N^{a+1} \sum_{xv} \big( \bv \cdot \nabla \bLam(\bp_x) \left[ E_{\mu_t^N} \bar \bI_x \right] \big) \sum_y (v \cdot (x-y)) (\theta_x + \theta_y - 2 \theta_x \theta_y)  \\
  = N^{a+1} \sum_{xv} \big( \bv \cdot \nabla \bLam(\bp_x) \left[ E_{\mu_t^N} \bar \bI_x \right] \big) (1 - 2 \theta_x) \sum_y (v \cdot (x-y))  (\theta_y - \theta_x).
\end{multline}
Using \eqref{pfyk} we obtain that the sum over $y$ reads as
\begin{align*}  
  \sum_y (v \cdot (x-y))  (\theta_y - \theta_x)
  &= 2 N^{-1-a}\chi_x \sum_y (v \cdot (x-y))  \bv \cdot A_\cV \nabla \bphi_x (y-x) + O(N^{-1-3a}) \\
  &= - 4 N^{-1-a} \chi_x \bv \cdot A_\cV \nabla \bphi_x v + O(N^{-1-3a}).
\end{align*}
Substituting this and the expansions \eqref{pfzb}, \eqref{pfzf} and \eqref{pfze}, Equation \eqref{pfzc} reads as
\begin{align*}
  2 N^{-a} \sum_{xv}  (\bv \cdot A_\cV \bphi_x) (\bv \cdot A_\cV \nabla \bphi_x v) (\bv \cdot A_\cV \left[ E_{\mu_t^N} \bar \bI_x \right]) 
    + O(N^{d-3a}).  
\end{align*}

In conclusion, substituting all obtained estimates and expansions in \eqref{pfzu} we obtain
\begin{align*} 
  &\int_0^T \frac d{dt}  H(\mu_t^N | \nu_t^N) \,dt \\
  &\leq 2 N^{-a} \int_0^T \sum_x E_{\mu_t^N} \left[ \bar \bI_x \right] \cdot A_\cV \bigg[ -\partial_t \bphi_x + \Delta \bphi_x 
  + \sum_v (\bv \cdot A_\cV \bphi_x) \big( \bv \cdot A_\cV \nabla \bphi_x v \big) \bv \bigg] \,dt \\ 
  &\quad + C_0 \int_0^T H(\mu_t^N | \nu_t^N) \,dt + O(N^{d-2a-\e_0}).
\end{align*}
This is precisely the desired estimate \eqref{pfzq}. This completes the proof of Theorem \ref{t}.

\section{Boltzmann-Gibbs principle} 
\label{s:BG}

Consider the setting from Theorem \ref{t}. We adopt the notation $\bp_x$, $\eta_x$, $\bI_x$ and $\bar \bI_x$ from Table \ref{tab:not}. For a local function $h:(\{0,1\}^{\cV})^{\Z^d}\to\R$, let
\begin{align} \label{fx}
f_{t,x}(\eta) := \tau_xh(\eta) - \tilde h(\bp_x) - \nabla \tilde h(\bp_x) \cdot \bar \bI_x,
\end{align}
where we recall that $\bp_x$ and $\bar \bI_x$ depend on $t$ and that $\tau_x$ is the shift operator with $\tau_x h (\eta) = h (\tau_x \eta)$ and $(\tau_x \eta)_y = \eta_{x+y}$. The function $\tilde h$ is defined in a neighborhood of $\bp_* \in \R^{d+1}$ by 
\begin{align*} 
\tilde h(\bp) = E_{\nu_{\bp}^N}[h].
\end{align*}

\begin{thm} \label{t:BG}
Let the setting and assumptions be as in Theorem \ref{t}. Let $h$ and $f_{t,x}$ be as above. Let $a_{t,x}^N  \in\R$ for $t\in[0,T]$, $x\in\T_N^d$ and $N \geq 1$ be deterministic coefficients which satisfy
\begin{align*} 
\sup_{N \geq 1} \sup_{t\in[0,T]} \max_{x\in\T_N^d}|a_{t,x}^N| < \infty.
\end{align*}
Then, there exist constants $C_0, \e_0 > 0$ such that
for all $N$ large enough
\begin{align*}
\E_{\mu^N}\left[ \left| \int_0^T \sum_{x\in\T_N^d} a_{t,x}^N f_{t,x} dt \right| \right] \le 
C_0 \left( \int_0^T H(\mu_t^N|\nu_t^N) dt + N^{d-2a-\e_0}\right).
\end{align*}
\end{thm}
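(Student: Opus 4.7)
The plan is to combine the entropy inequality applied at each time $t$ with a blockwise Boltzmann--Gibbs decomposition, whose residuals are controlled on one side by the spectral gap (Assumption \ref{a:sg}) and on the other side by Taylor's theorem. At each $t$, the entropy inequality gives
\[
  \E_{\mu_t^N}\Big[\big|\textstyle\sum_x a_{t,x}^N f_{t,x}\big|\Big]
  \le \frac{1}{\gamma}\Big(H(\mu_t^N|\nu_t^N) + \log E_{\nu_t^N}\big[e^{\gamma|\sum_x a_{t,x}^N f_{t,x}|}\big]\Big).
\]
Choosing $\gamma = 1/C_0$, integrating in $t$, and using $e^{|y|}\le e^y+e^{-y}$ to dispose of the absolute value, the first term contributes $C_0\int_0^T H(\mu_t^N|\nu_t^N)\,dt$ to the right-hand side, and it remains to show that
\[
  \log E_{\nu_t^N}\big[\exp\big(\pm\gamma\textstyle\sum_x a_{t,x}^N f_{t,x}\big)\big]\le C N^{d-2a-\e_0},
\]
uniformly in $t\in[0,T]$.

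To estimate this log-MGF, I would introduce mesoscopic boxes $\Lambda_M(x):=x+\Lambda_M$ of side $2M+1$ (with $M=M_N$ to be optimized) and split
\[
  f_{t,x} = \big(f_{t,x}-\Pi_{M,x} f_{t,x}\big) + \Pi_{M,x} f_{t,x},
\]
where $\Pi_{M,x}$ is the conditional expectation under $\nu_t^N$ given the block-averaged mass-momentum $\bI_x^M$, equivalently integration against the micro-canonical measure $\nu_{M,\bI_x^M}$. The \emph{fast} part has zero micro-canonical mean, so Assumption \ref{a:sg} bounds its conditional variance by $CM^\kappa \big\langle f_{t,x},-(L_M^{ex,s}+L_M^c)f_{t,x}\big\rangle_{\nu_{M,\bi}}$; partitioning $\T_N^d$ into $O(M^d)$ classes of mutually disjoint translates (on which the fast parts are independent under the product measure $\nu_t^N$) and converting the variance bound into an exponential moment via the boundedness of $f_{t,x}$ yields a contribution of order $N^d M^\kappa / N^2$ to the log-MGF. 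The \emph{slow} part $\Pi_{M,x} f_{t,x}$ depends on $\eta$ only through $\bI_x^M$; using the equivalence of ensembles to replace $E_{\nu_{M,\bi}}[\tau_x h]$ by $\tilde h(\bi)+O(M^{-d})$ and Taylor-expanding $\tilde h$ around $\bp_x$, the constant and linear terms cancel by the very construction of $f_{t,x}$. What remains is a quadratic residual of size $O(|\bI_x^M-\bp_x|^2)=O(N^{-2a}+M^{-d})$, using $|\bp_x-\bp_*|=O(N^{-a})$ and $E_{\nu_t^N}[|\bI_x^M-\bp_x|^2]=O(M^{-d})$; summing over $x$ yields a slow contribution of order $N^d(N^{-2a}+M^{-d})$.

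Balancing these contributions with $M=N^\alpha$ suggests the optimal choice $\alpha\sim 2/(\kappa+d)$, at which the composite bound becomes $N^{d-2a-\e_0}$ for some $\e_0>0$ precisely in the regime $a<d/(\kappa+2d)$ --- matching the hypothesis of Theorem \ref{t}. The hardest part is exactly this delicate interplay: the only available spectral gap is polynomial with exponent $\kappa$ (typically much worse than the sharp value $2$), so the ``Taylor gain'' from the quadratic cancellation in $f_{t,x}$ only barely dominates the diverging spectral-gap cost, and the margin collapses once $a$ exceeds the stated threshold. Secondary technicalities --- comparing the pointwise $\bar\bI_x$ against its block average $\bI_x^M-\bp_x$ (boundary effects of order $M/N$), controlling the slow variation of $\blam(t,\cdot)$ across a block, and sharpening the variance-to-exponential-moment conversion under the varying product measure $\nu_t^N$ --- enter only as lower-order corrections that do not disturb the main balance.
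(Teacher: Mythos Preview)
Your decomposition into a ``fast'' micro-canonical fluctuation and a ``slow'' Taylor remainder is the right idea, and your treatment of the slow part is essentially correct. The fatal gap is in the fast part: by applying the entropy inequality \emph{pointwise in $t$} and then trying to bound the static log-moment generating functional
\[
  \log E_{\nu_t^N}\Big[\exp\Big(\pm\gamma\sum_x a_{t,x}^N\big(f_{t,x}-\Pi_{M,x}f_{t,x}\big)\Big)\Big]
\]
under the product measure $\nu_t^N$, you have thrown away the dynamics, and with it any access to the factor $N^{-2}$ you claim. That factor is the diffusive speed-up in $L_N=N^2(L_N^{ex}+L_N^c)$; it cannot appear in a purely static estimate. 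Concretely, the spectral gap inequality reads $\mathrm{Var}_{\nu_{M,\bi}}(g)\le CM^\kappa\langle g,-L_M g\rangle$, which for the bounded local function $g=m_x$ gives nothing better than the trivial $\mathrm{Var}(m_x)=O(1)$ (indeed the right-hand side is $O(M^\kappa)$, worse). Running your independence-plus-H\"older scheme with $\gamma$ of order one then yields a fast-part contribution of order $N^dM^d\gamma^2$, not $N^dM^\kappa N^{-2}$, and no choice of $M$ rescues the balance.

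What the paper does instead is keep the time integral intact and apply the entropy inequality on \emph{path space}, with reference law $\P_{\nu_{\bp_*}^N}$ (the stationary dynamics, not $\nu_t^N$). The path-space relative entropy collapses to $H(\mu^N|\nu_{\bp_*}^N)=O(N^{d-2a})$, and the remaining term
\[
  \frac1\gamma\log\E_{\nu_{\bp_*}^N}\Big[\exp\Big|\gamma\int_0^T\sum_x a_x m_x\,dt\Big|\Big]
\]
is bounded via the Feynman--Kac formula by a time integral of the variational expression $\sup_g\big(\gamma\langle m_x,g\rangle-\langle-L_N\sqrt g,\sqrt g\rangle\big)$. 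Here the full Dirichlet form of $L_N$, carrying the $N^2$ factor, enters. After localizing the Dirichlet form to boxes $\Lambda_{x,M}$ and applying the Rayleigh bound, the spectral gap is used in the correct direction, namely to control $\langle(-L_M)^{-1}m_x,m_x\rangle\le CM^\kappa\langle m_x,m_x\rangle$, yielding a fast-part bound of order $M^{(\kappa+d)/2}N^{-(1-a)}N^{d-2a}$. The slow part is then handled separately by a \emph{second} entropy inequality, this time against $\nu_t^N$, producing the $\int_0^T H(\mu_t^N|\nu_t^N)\,dt$ term. Your single pointwise entropy inequality tries to do both jobs at once and cannot.
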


For an explicit expression of $\e_0$ see \eqref{pfyh} below.

The proof of Theorem \ref{t:BG} is given at the end of this section. It is based on the two key Lemmas \ref{l:BG:mx} and \ref{l:BG:box}. In preparation for stating these lemmas, we simply write $f_x = f_{t,x}$ and $a_x = a_{t,x}^N$ because the dependence of these objects on $t,N$ is of little importance in the computations that follow. We denote by $C > 0$ a generic constant which is independent of $N$. Its value may change from line to line, but within a display it remains the same. To separate several generic constants in the same computation we use $C', C''$ etc.

We split 
\begin{multline} \label{pfyt}
  \E_{\mu^N} \left[ \bigg |\int_0^T \sum_{x \in \T_N^d} a_x f_x dt \bigg | \right] \\
  \leq \E_{\mu^N} \left[ \bigg |\int_0^T \sum_{x \in \T_N^d} a_x m_x dt \bigg | \right]
  + \E_{\mu^N} \left[ \bigg |\int_0^T \sum_{x \in \T_N^d} a_x E_{\nu_{\bp_*}^N}[f_x \mid \bI_x^M] dt \bigg | \right],  
\end{multline}  
where $M = M(N)$ is an integer,
\begin{align*}
  &m_x := f_x - E_{\nu_{\bp_*}^N}[f_x \mid \bI_x^M], \quad
  \bI_x^M := \frac1{|\Lambda_M|} \sum_{y \in {\Lambda_{x,M}}} \bI_y,\\
  &\Lambda_M := \{ y \in \T_N^d : |y| \leq M \}, \quad
  {\Lambda_{x,M}} := \{ y \in \T_N^d : y - x \in \Lambda_M \}.
\end{align*}
We choose $M$ at the end of the proof; during the proof we use
\comm{[Needed for Rayleigh, and for $\gamma \gg 1$ in Lemma \ref{l:BG:mx}]}
\begin{equation} \label{l:L10:6:assn}
   1 \ll M^{\kappa + d} \ll \min \big\{ N^{2 - 2a}, N^{\kappa + d} \big\}
  \quad \text{as } N \to \infty.
\end{equation}
Note that $\bI_x^M$ is a local average of $\bI_x$. We estimate both terms in \eqref{pfyt} separately in Lemmas \ref{l:BG:mx} and \ref{l:BG:box}.

\begin{lem} \label{l:BG:mx}
For all $N$ large enough
  \[
      \E_{\mu^N} \left[ \bigg |\int_0^T \sum_{x \in \T_N^d} a_x m_x dt \bigg | \right]
      \leq C \frac{ M^{\tfrac{\kappa + d}2} }{N^{1-a}} N^{d-2a},
  \]
where the constant $C > 0$ is independent of $N$.
\end{lem}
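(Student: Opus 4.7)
The proof rests on the conditional mean-zero property $E_{\nu_{M,\bi}}[m_x] = 0$ for every $\bi\in\cD_M$ (where $\nu_{M,\bi}$ is the canonical measure on $\Lambda_{x,M}$ at fixed box averages $\bi$) and on the uniform bound $\|m_x\|_\infty\le C$, inherited from the boundedness of the local function $h$ and the smoothness of $\tilde h$ near $\bp_*$. The plan is standard: (i) apply the entropy inequality to pass from $\mu_t^N$ to the equilibrium $\nu_{\bp_*}^N$; (ii) use Feynman--Kac to turn the resulting exponential moment into a variational problem; (iii) dominate that problem via the spectral gap from Assumption \ref{a:sg}; and (iv) bound $H(\mu^N|\nu_{\bp_*}^N)$ by $O(N^{d-2a})$.

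For (i)--(ii), set $G(t,\eta) := \sum_x a_x m_x$. By Fubini, the entropy inequality with reference $\nu_{\bp_*}^N$, and $e^{|y|}\le e^y+e^{-y}$, we have for every $\gamma>0$
\[
\E_{\mu^N}\Big[\Big|\int_0^T G\,dt\Big|\Big] \le \frac{H(\mu^N|\nu_{\bp_*}^N)}{\gamma} + \frac{\log 2}{\gamma} + \frac{T}{\gamma}\sup_{t,\pm}\Lambda_N(t,\pm),
\]
where, noting that $\nu_{\bp_*}^N$ is stationary for $L_N$ with symmetric part $N^2(L_N^{ex,s}+L_N^c)$ in $L^2(\nu_{\bp_*}^N)$, Feynman--Kac yields
\[
\Lambda_N(t,\pm) := \sup_{\|f\|_{L^2(\nu_{\bp_*}^N)}=1}\Big\{\pm\gamma\int G(t)f^2\,d\nu_{\bp_*}^N - N^2\langle f, -(L_N^{ex,s}+L_N^c)f\rangle_{\nu_{\bp_*}^N}\Big\}.
\]

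For (iii), fix $x$ and condition on $\bI_x^M=\bi$. Setting $\bar f_\bi := E_{\nu_{M,\bi}}[f]$ (a function of the outside configuration only), the mean-zero property gives $E_{\nu_{M,\bi}}[m_x f^2] = E_{\nu_{M,\bi}}[m_x(f-\bar f_\bi)(f+\bar f_\bi)]$; Cauchy--Schwarz with $\|m_x\|_\infty\le C$ and Assumption \ref{a:sg} (applied to $f$ on $\Lambda_{x,M}$ with outside frozen) give
\[
|E_{\nu_{M,\bi}}[m_x f^2]| \le C M^{\kappa/2}\sqrt{E_{\nu_{M,\bi}}[(f+\bar f_\bi)^2]}\sqrt{\langle f, -(L_M^{ex,s}+L_M^c)f\rangle_{\nu_{M,\bi}}}.
\]
Using $E_{\nu_{M,\bi}}[(f+\bar f_\bi)^2]\le 4E_{\nu_{M,\bi}}[f^2]$, averaging over the outside configuration via Cauchy--Schwarz (and using $\|f\|_{L^2(\nu_{\bp_*}^N)}=1$), then summing over $x$ by Cauchy--Schwarz and observing that each bond/collision lies in at most $|\Lambda_M|\sim M^d$ shifted boxes $\Lambda_{x,M}$, we arrive at
\[
\Big|\int G f^2\,d\nu_{\bp_*}^N\Big| \le C N^{d/2}M^{(\kappa+d)/2}\sqrt{\langle f, -(L_N^{ex,s}+L_N^c)f\rangle_{\nu_{\bp_*}^N}}.
\]
Young's inequality with weight $N^2$ closes the Dirichlet form, yielding $\Lambda_N(t,\pm)\le C\gamma^2N^{d-2}M^{\kappa+d}$.

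For (iv), optimizing over $\gamma$ produces the bound $C\sqrt{H(\mu^N|\nu_{\bp_*}^N)\,N^{d-2}M^{\kappa+d}}$. The chain rule
\[
H(\mu^N|\nu_{\bp_*}^N) = H(\mu^N|\nu_0^N) + E_{\mu^N}\big[\log(d\nu_0^N/d\nu_{\bp_*}^N)\big],
\]
combined with the hypothesis $H(\mu^N|\nu_0^N) = o(N^{d-2a})$ and a second-order Taylor expansion of $\log(d\nu_0^N/d\nu_{\bp_*}^N)$ around $\blam=\bzero$ using $\|\blam(0,\cdot)\|_\infty = O(N^{-a})$ (from \eqref{pfyf}), yields $H(\mu^N|\nu_{\bp_*}^N) = O(N^{d-2a})$; substitution gives the claimed bound $CM^{(\kappa+d)/2}N^{d-1-a}$. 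The principal technical hurdle is step (iii): the most direct Cauchy--Schwarz on $m_x(f^2 - E[f^2])$ would produce a variance of $f^2$ and hence a Dirichlet form of $f^2$, which is too weak to close via Young against $N^2\langle f, -L_N^s f\rangle$; the product decomposition $(f-\bar f_\bi)(f+\bar f_\bi)$ combined with $\|m_x\|_\infty\le C$ instead converts the spectral gap directly into a Dirichlet form of $f$ itself.
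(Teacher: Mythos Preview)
Your steps (i)--(iii) are correct and constitute a genuinely different route from the paper's. Where the paper localizes the Dirichlet form, restricts to canonical ensembles, and then invokes the Rayleigh perturbative eigenvalue estimate from \cite[Appendix~3, Theorem~1.1]{KL}, you instead use the product decomposition $f^2-\bar f_\bi^2=(f-\bar f_\bi)(f+\bar f_\bi)$ to feed the spectral gap directly into the Dirichlet form of $f$, then sum over $x$ by Cauchy--Schwarz and the overlap count. Both arrive at $\Lambda_N\le C\gamma^2 M^{\kappa+d}N^{d-2}$ and the same optimal $\gamma$. Your argument is more elementary in that it avoids the Rayleigh machinery; it is in fact the method of \cite{EFHPS,FvMST} that the paper advertises in the introduction, though its actual proof of this lemma reverts to Rayleigh.

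Step (iv), however, has a gap. The pointwise Taylor expansion of $\log(d\nu_0^N/d\nu_{\bp_*}^N)$ in $\blam$ around $\bzero$ has a nonvanishing linear part $\sum_{x,v}(\blam(0,x)\cdot\bv)(\eta_x(v)-\tfrac12)$, which is $O(N^{-a})$ per site and hence only $O(N^{d-a})$ after summing; taking $E_{\mu^N}$ does not improve this, since you have no a priori control on $E_{\mu^N}[\eta_x(v)-\tfrac12]$. With $H(\mu^N|\nu_{\bp_*}^N)=O(N^{d-a})$ the optimized bound would be $M^{(\kappa+d)/2}N^{d-1-a/2}$, too weak by $N^{a/2}$. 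The paper handles this by a bootstrap: apply the entropy inequality once more to $E_{\mu^N}[\log(d\nu_0^N/d\nu_{\bp_*}^N)]$, now with reference $\nu_{\bp_*}^N$, to obtain
\[
E_{\mu^N}\Big[\log\tfrac{d\nu_0^N}{d\nu_{\bp_*}^N}\Big] \le \tfrac12 H(\mu^N|\nu_{\bp_*}^N) + \tfrac12\log\int\Big(\tfrac{d\nu_0^N}{d\nu_{\bp_*}^N}\Big)^2 d\nu_{\bp_*}^N.
\]
The term $\tfrac12 H(\mu^N|\nu_{\bp_*}^N)$ is absorbed into the left side of the chain-rule identity, and the remaining log-moment is $O(N^{d-2a})$ by a direct product computation over sites.
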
 

\begin{proof} 
We apply the entropy inequality with respect to $\nu_{\bp_*}^N$ and with parameter 
\begin{equation} \label{pfyj}
   1 \ll \gamma \ll \frac{N^2}{M^{\kappa + d}}.
\end{equation} 
This yields
\begin{equation} \label{pfyl}
  \E_{\mu^N} \left[ \bigg |\int_0^T \sum_{x \in \T_N^d} a_x m_x dt \bigg | \right]
  \leq \frac1\gamma H(\P_{\mu^N} | \P_{\nu_{\bp_*}^N}) + \frac1\gamma \log \E_{\nu_{\bp_*}^N} \bigg[ \exp \bigg| \gamma \int_0^T \sum_{x \in \T_N^d} a_x m_x  dt \bigg| \bigg].
\end{equation} 
For the first term, we write \comm{[PvM: I use here pdf QandA with KT p.9-10]}
\begin{equation} \label{pfyn}  
  H(\P_{\mu^N} | \P_{\nu_{\bp_*}^N})
  = H(\mu^N | \nu_{\bp_*}^N)
  = H(\mu^N | \nu_0^N) + \int \log \frac{d \nu_0^N}{d \nu_{\bp_*}^N} \, d\mu^N.
\end{equation}
To the second term in \eqref{pfyn} we apply the entropy inequality. This yields 
\[
  \int \log \frac{d \nu_0^N}{d \nu_{\bp_*}^N} \, d\mu^N
  \leq \frac12 H(\mu^N | \nu_{\bp_*}^N) + \frac12 \log \int \exp ( 2 \log \frac{d \nu_0^N}{d \nu_{\bp_*}^N} ) d \nu_{\bp_*}^N.
\]
Substituting this in \eqref{pfyn} and rearranging terms, we obtain
\begin{equation} \label{pfym}  
  H(\mu^N | \nu_{\bp_*}^N)
  \leq 2 H(\mu^N | \nu_0^N) + \log \int \Big( \frac{d \nu_0^N}{d \nu_{\bp_*}^N} \Big)^2 d \nu_{\bp_*}^N.
\end{equation}
We claim that the right-hand side is $O(N^{d-2a})$. Using this claim we obtain
\[
  \frac1\gamma H(\P_{\mu^N} | \P_{\nu_{\bp_*}^N}) \leq \frac C\gamma N^{d-2a}.
\]
To prove this claim, we treat both terms on the right-hand side of \eqref{pfym} separately. The first term is given to be $O(N^{d-2a})$, and for the second term this follows from the following computation. Note that for all $\eta \in X_N$ (recall \eqref{mulam:prod}) 
\[
  \frac{\nu_0^N(\eta)^2}{\nu_{\bp_*}^N(\eta)}
  = \frac{\mu_{\blam(0,x)}^N(\eta)^2}{\mu_\bzero^N(\eta)}
  = \prod_{x \in \T_N^d} \prod_{v \in \cV} 2 \frac{ e^{2 \blam(0,x) \cdot \bv \eta_x } }{ (e^{\blam(0,x) \cdot \bv} + 1)^2 }.
\]
Hence,
\begin{align*}
  \int \Big( \frac{d \nu_0^N}{d \nu_{\bp_*}^N} \Big)^2 d \nu_{\bp_*}^N
  = \sum_{\eta \in X_N} \prod_{x \in \T_N^d} \prod_{v \in \cV} 2 \frac{ e^{2 \blam(0,x) \cdot \bv \eta_x } }{ (e^{\blam(0,x) \cdot \bv} + 1)^2 }
  = \prod_{x \in \T_N^d} \prod_{v \in \cV} 2 \frac{ e^{2 \blam(0,x) \cdot \bv } + 1 }{ (e^{\blam(0,x) \cdot \bv} + 1)^2 }.
\end{align*}
By \eqref{pfyt} we have $|\blam(0,x) \cdot \bv| \leq C N^{-a}$. Then, using the expansion 
\[
  2\frac{x^2 + 1}{(x+1)^2} = 1 + O (x-1)^2,
\]
we obtain
\begin{align*}
  \log \int \Big( \frac{d \nu_0^N}{d \nu_{\bp_*}^N} \Big)^2 d \nu_{\bp_*}^N
  \leq \log \prod_{x \in \T_N^d} \prod_{v \in \cV} (1 + CN^{-2a})
  = |\cV| N^d \log(1 + CN^{-2a}) \leq C' N^{d-2a}.
\end{align*}
This proves the claim that the right-hand side in \eqref{pfym} is $O(N^{d-2a})$.

For the second term in \eqref{pfyl} we use $e^{|x|} \leq e^x + e^{-x}$, and then apply the Feynman--Kac formula (see \cite[Appendix 1, Lemma 7.2]{KL}). This yields   \comm{$\max_g$ would also be OK, but no one does this in the literature. Indeed, $g = (g(\eta))_\eta$ is a vector in $\R^{|X_N|}$ and $0 \leq g(\eta) \leq 1 / \min_\zeta \nu_{\bp_*}^N(\zeta) < \infty$ for all $\eta$} 
\begin{multline*}
  \frac1\gamma \log \E_{\nu_{\bp_*}^N} \bigg[ \exp \bigg| \gamma \int_0^T \sum_{x \in \T_N^d} a_x m_x  dt \bigg| \bigg] \\
  \leq \max_\pm \frac1\gamma \int_0^T \sup_g \Big( \gamma \sum_{x \in \T_N^d} \langle \pm a_x m_x, g \rangle_{\nu_{\bp_*}^N} - \langle -L_N \sqrt g, \sqrt g \rangle_{\nu_{\bp_*}^N} \Big) dt + \frac{\log 2}\gamma,
\end{multline*}
where the supremum is carried over all functions $g:X_N\to[0,\infty)$ satisfying $\int_{X_N} gd\nu_{\bp_*}^N=1$.  
Since $\gamma \gg 1$ we may absorb $\gamma^{-1} \log 2$ in the constant $C$ in Lemma \ref{l:BG:mx}; we neglect it in the remainder. We also neglect $\max_\pm$, because the proof below works verbatim when $m_x$ is replaced by $-m_x$. Hence, it is left to bound
\begin{equation} \label{pfyq} 
  \frac1\gamma \int_0^T \sup_g \Big( \gamma \sum_{x \in \T_N^d} \langle a_x m_x, g \rangle_{\nu_{\bp_*}^N} - \langle -L_N \sqrt g, \sqrt g \rangle_{\nu_{\bp_*}^N} \Big) dt.
\end{equation}

With this aim, we set $\psi := \sqrt g$ and recall the definitions of the symmetric
and anti-symmetric part $L_N^{ex,s}, L_N^{ex,a}$ from \eqref{LNexs} and \eqref{LNexa}.
Since $\langle -L_N^{ex,a} \psi, \psi \rangle_{\nu_{\bp_*}^N}=0$,
we can expand the Dirichlet form $\langle -L_N \psi, \psi \rangle_{\nu_{\bp_*}^N}$ as 
\begin{align*} 
  \langle -L_N \psi, \psi \rangle_{\nu_{\bp_*}^N}
   = N^2 \langle -L_N^{ex,s} \psi, \psi \rangle_{\nu_{\bp_*}^N} + N^2 \langle -L_N^c \psi, \psi \rangle_{\nu_{\bp_*}^N}.
\end{align*}

Next we localize the Dirichlet forms $\langle -L_N^{ex,s} \psi, \psi \rangle_{\nu_{\bp_*}^N}$ and $ \langle -L_N^c \psi, \psi \rangle_{\nu_{\bp_*}^N}$. We use that $\nu_{\bp_*}^N$ is invariant under $L_N^{ex,s}$ to rewrite \comm{[This is a result which holds for all Markov chains; see e.g. KL App 1 Thm 9.2]}
\begin{align*}
  \langle -L_N^{ex,s} \psi, \psi \rangle_{\nu_{\bp_*}^N}
  = \frac12 \sum_{\substack{ x,y \in \T_N^d \\ |y - x| = 1 }} \sum_{v \in \cV} E_{\nu_{\bp_*}^N} \Big[ \eta_x \big( 1- \eta_y \big)
 \big(  \psi \left(  \eta^{x,y,v}\right)  - \psi \left(\eta\right)  \big)^2 \Big]
=: \sum_{\substack{ x,y \in \T_N^d \\ |y - x| = 1 }}  A_{xy},
\end{align*} 
where $A_{xy} \geq 0$ for all $x,y$. Then,
\begin{align*}
  \langle -L_N^{ex,s} \psi, \psi \rangle_{\nu_{\bp_*}^N} 
  &= \sum_{x\in\T_N^d} \sum_{|y - x| = 1} \frac1{|\Lambda_M|} \sum_{z \in \Lambda_{x,M}} A_{xy} \\
  &= \frac1{ |\Lambda_M|} \sum_{z\in\T_N^d}  \sum_{x \in {\Lambda_{z,M}}} \sum_{|y - x| = 1} A_{xy} \\ 
  &\geq \frac1{ |\Lambda_M|} \sum_{z\in\T_N^d} {\bigg(} \sum_{\substack{ x,y \in {\Lambda_{z,M}} \\ |y - x| = 1 }} A_{xy} {\bigg)} \\
  &= \frac1{ |\Lambda_M|} \sum_{z\in\T_N^d} \langle -L_{{\Lambda_{z,M}}}^{ex,s} \psi, \psi \rangle_{\nu_{\bp_*}^N},
\end{align*} 
where 
\[
  L_{{\Lambda_{z,M}}}^{ex,s} \psi (\eta)
  := \sum_{\substack{ x,y \in {\Lambda_{z,M}} \\ |y - x| = 1 }} \sum_{v\in\cV} \eta_x \big( 1- \eta_y \big)
\left\{  \psi \left(  \eta^{x,y,v}\right)  - \psi \left(\eta\right)  \right\}.
\]
In the last step we have used that $\nu_{\bp_*}^N$ is invariant under $L_{{\Lambda_{z,M}}}^{ex,s}$ for any $z$.
The Dirichlet form $\langle-L_N^c\psi, \psi\rangle_{\nu_{\bp_*}^N}$ can be rewritten similarly.
Indeed, since we have
\begin{align*}
  \langle -L_N^{c} \psi, \psi \rangle_{\nu_{\bp_*}^N}
  = \frac12 \sum_{x\in\T_N^d} \sum_{q \in \cQ} E_{\nu_{\bp_*}^N} \Big[ p(x,q,\eta)
 \big(  \psi \left(  \eta^{x,q}\right)  - \psi \left(\eta\right)  \big)^2 \Big] =: \sum_{x\in\T_N^d} B_x,
\end{align*}
it follows that
\begin{align*}
  \langle -L_N^{c} \psi, \psi \rangle_{\nu_{\bp_*}^N}
  &= \sum_{x\in\T_N^d} \frac1{ |\Lambda_M|} \sum_{z\in \Lambda_{x,M}} B_x \\
  &= \frac1{ |\Lambda_M|} \sum_{z\in\T_N^d} \sum_{x\in \Lambda_{z,M}} B_x \\
  &= \frac1{ |\Lambda_M|} \sum_{z\in\T_N^d} \langle -L_{{\Lambda_{z,M}}}^{c} \psi, \psi \rangle_{\nu_{\bp_*}^N},
\end{align*}
where
\[
  L_{{\Lambda_{z,M}}}^{c} \psi (\eta)
  := \sum_{x \in {\Lambda_{z,M}}} \sum_{q \in \cQ} p(x,q,\eta)
\left\{  \psi \left(  \eta^{x,q}\right)  - \psi \left(\eta\right)  \right\}.
\]
In conclusion, Taking $L_{\Lambda_{z,M}}=L_{\Lambda_{z,M}}^{ex,s} + L_{\Lambda_{z,M}}^c$ we get
\[
  \langle -L_N \psi, \psi \rangle_{\nu_{\bp_*}^N}
  \geq \frac{N^2}{|\Lambda_M|} \sum_{z\in\T_N^d} \langle -L_{{\Lambda_{z,M}}} \psi, \psi \rangle_{\nu_{\bp_*}^N}.
\]
Substituting this in \eqref{pfyq} and bringing $\sup_g$ inside $\sum_x$, the expression in \eqref{pfyq} is bounded from above by
\begin{equation} \label{pfyp}
  \frac{N^2}{\gamma |\Lambda_M|} \sum_{x \in \T_N^d} \int_0^T \sup_g \Big( \frac{\gamma |\Lambda_M|}{N^2} \langle a_x m_x, g \rangle_{\nu_{\bp_*}^N} - \langle -L_{{\Lambda_{x,M}}} \sqrt g, \sqrt g \rangle_{\nu_{\bp_*}^N} \Big) dt,
\end{equation} 
where we recall that the supremum is carried over all functions $g:X_N\to[0,\infty)$ satisfying $\int_{X_N} g d\nu_{\bp_*}^N=1$.
 
Next, we bound the integrand in \eqref{pfyp} from above by a version which is localized on $\Lambda_{x,M}$. The $x$-dependence will be of little importance. Therefore, we will not always adopt it explicitly in the notation that follows. In addition, we abuse notation and abbreviate $L_M := L_{{\Lambda_{x,M}}}$.  \comm{[Note: $x$-dependence is not important.]}
 
Let $g_M$ be the conditional expectation $g_M=E_{\nu_{\bp_*}^N}[g|\{\eta_z\}_{z\in\Lambda_{x,M}}]$.  
Since $m_x$ depends only on the variables $\{\eta_z\}_{z\in\Lambda_{x,M}}$ for $M$ large enough,
we have $\langle a_x m_x, g \rangle_{\nu_{\bp_*}^N} = \langle a_x m_x, g_M \rangle_{\nu_{\bp_*}^M}$, where $\nu_{\bp_*}^M$ is the restriction of $\nu_{\bp_*}^N$ to $X_M := (\{0,1\}^\cV)^{\Lambda_{x,M}}$.
On the other hand, since the application $g\mapsto\langle -L_M \sqrt g, \sqrt g \rangle_{\nu_{\bp_*}^N}$ is convex (see \cite[Appendix 1, Corollary 10.3]{KL}), we have \comm{by Jensen}
\begin{align*}
\langle -L_M \sqrt g, \sqrt g \rangle_{\nu_{\bp_*}^N} \ge
\langle -L_M \sqrt {g_M}, \sqrt {g_M} \rangle_{\nu_{\bp_*}^M}.
\end{align*}
Therefore \eqref{pfyp} is bounded from above by
\begin{equation} \label{pfyp-1}
  \frac{N^2}{\gamma |\Lambda_M|} \sum_{x \in \T_N^d} \int_0^T \sup_g \Big( \frac{\gamma |\Lambda_M|}{N^2} \langle a_x m_x, g \rangle_{\nu_{\bp_*}^M} - \langle -L_M \sqrt g, \sqrt g \rangle_{\nu_{\bp_*}^M} \Big) dt,
\end{equation} 
where now the supremum is carried over all functions $g:X_M\to[0,\infty)$ satisfying $\int_{X_M} g d\nu_{\bp_*}^M = 1$. 
  
Let $g$ be such a function.
Recall the definition of $\cD_M$ above \eqref{mic-can space}.
For each $\bi\in\cD_M$, let $X_{M,\bi} = \{\eta\in X_M: \bI_x^M (\eta) = \bi\}$ 
and $\nu_{M,\bi}$ be the canonical measure $\nu_{M,\bi}(\cdot)=\nu_{\bp_*}^M(\cdot|X_{M,\bi})$.
Since $X_M = \cup_{\bi\in\cD_M} X_{M,\bi}$ is a disjoint union, we can write  \comm{[Here we are simply splitting the Markov chain into its irreducible components.]}  
\begin{align*}
\langle a_x m_x, g \rangle_{\nu_{\bp_*}^M}
&= \sum_{\bi\in\cD_M} \int_{X_M} {\bf 1}_{X_{M,\bi}}a_xm_x g d\nu_{\bp_*}^M\\ 
&= \sum_{\bi\in\cD_M} c_{M,\bi}(g)\int_{X_{M,\bi}} a_xm_x g_{M,\bi} d\nu_{M,\bi},
\end{align*}
where 
\begin{align*}
c_{M,\bi}(g) := \int_{X_M}  {\bf 1}_{X_{M,\bi}}g d\nu_{\bp_*}^M,\quad
g_{M,\bi}(\eta) := \frac{\nu_{\bp_*}^M(X_{M,\bi})}{c_{M,\bi}(g)} g(\eta).
\end{align*}
Similarly, the Dirichlet form $\langle -L_M \sqrt {g}, \sqrt {g} \rangle_{\nu_{\bp_*}^M}$
can be rewritten as
\begin{align*} 
\sum_{\bi\in\cD_M} c_{M,\bi}(g)\int_{X_{M,\bi}} \left(-L_M \sqrt{g_{M,\bi}}\right) \sqrt{g_{M,\bi}} d\nu_{M,\bi}.
\end{align*}
Note that $\int_{X_{M,\bi}} g_{M,\bi}d\nu_{M,\bi}=1$ for each $\bi\in\cD_M$.
Therefore \eqref{pfyp-1} is bounded from above by  \comm{["=1" on p.56 below]} 
\begin{equation} \label{pfyo}
  \frac{N^2}{\gamma |\Lambda_M|} \sum_{x \in \T_N^d} \int_0^T \max_{\bi \in \cD_M} \sup_g \Big( \frac{\gamma |\Lambda_M|}{N^2} \langle a_x m_x, g \rangle_{\nu_{M,\bi}} - \langle -L_M \sqrt g, \sqrt g \rangle_{\nu_{M,\bi}} \Big) dt,
\end{equation}
where now the supremum is carried over all functions $g:X_{M,\bi}\to[0,\infty)$ satisfying \\ $\int_{X_{M,\bi}} g d\nu_{M,\bi}=1$. 

Next, we show that the Rayleigh estimate in \cite[Appendix 3, Theorem 1.1]{KL} applies to \eqref{pfyo}. This application requires:
\begin{itemize}  
  \item a spectral gap estimate,
  \item that $\nu_{M,\bi}$ is invariant under $L_M$, 
  \item that $L_M$ is reversible with respect to $\nu_{M,\bi}$,
  \item $E_{\nu_{M,\bi}} m_x = 0$, and
  \item $|a_x m_x| \leq C_1$ uniformly in $t,x,v,\omega,N$. 
\end{itemize} 
Assumption \ref{a:sg} provides a spectral gap estimate with constant $C_0 M^\kappa$.
Since $\nu_{M,\bi}$ is the canonical measure, it is invariant under $L_M$. Reversibility follows from the self-adjointness of $L_M$ with respect to $\nu_{M,\bi}$, which holds by the construction of $L_M$. Noting that $\nu_{M,\bi} = \nu_{\bp_*}^M ( \cdot | \bI_0^M = \bi)$, we have $E_{\nu_{M,\bi}} m_x = 0$ by construction of $m_x$. Finally, the uniform bound on $|a_x m_x|$ holds by the given uniform bound on $a_x$, $|\bI_x| \leq C$ and $\| h \|_{C^1( B(\bp_*, r) )} < \infty$ for some radius $r = r(\cV) > 0$ and $N$ large enough. \comm{["... invariant under $L_M$": see p.57]}

Applying the Rayleigh estimate yields that \eqref{pfyo} is bounded from above by
\begin{equation} \label{BG:pf:1}
  \frac{\gamma |\Lambda_M|}{N^2} \sum_{x \in \T_N^d} \int_0^T \max_{\bi \in \cD_M} \frac{ a_x^2 \langle (-L_M)^{-1} m_x, m_x \rangle_{\nu_{M,\bi}} }{ 1 - 2 C_0 C_1 M^\kappa \gamma |\Lambda_M| / N^2 },
\end{equation}
provided that the denominator is positive. In fact, the dominator is larger than $\frac12$ for $N$ large enough, which follows from $|\Lambda_M| \leq C M^d$ and the upper bound on $\gamma$ imposed in \eqref{pfyj}. For the numerator, we bound \comm{[To PvM: see pdf Q and A with KT p.11]}
\[
  \langle (-L_M)^{-1} m_x, m_x \rangle_{\nu_{M,\bi}}
  \leq C M^\kappa \langle m_x, m_x \rangle_{\nu_{M,\bi}}
  = C' M^\kappa.
\]
In conclusion, for $N$ large enough, \eqref{BG:pf:1} is bounded from above by
$
  C \gamma M^{\kappa + d} N^{d - 2}. 
$

Collecting all estimates above, we obtain
\[
  \E_{\mu^N} \bigg[ \bigg |\int_0^T \sum_{x \in \T_N^d} a_x m_x dt \bigg | \bigg]
      \leq C \Big( \frac1\gamma + \frac{\gamma M^{\kappa + d} }{N^{2-2a}} \Big) N^{d-2a}.
\]      
Minimize this expression over $\gamma$ yields 
\[ \gamma = M^{-\tfrac{\kappa + d}2} N^{1-a}, \]
which by the imposed bounds on $M$ in \eqref{l:L10:6:assn} satisfies the requirement \eqref{pfyj}.
This proves the desired estimate in Lemma \ref{l:BG:mx}. 
\end{proof}

Next, in Lemma \ref{l:BG:box} below, we bound the second term in \eqref{pfyt}. 

\begin{lem} \label{l:BG:box}
For all $N$
\begin{equation*}
      \E_{\mu^N} \left[ \bigg |\int_0^T \sum_{x \in \T_N^d} a_x E_{\nu_{\bp_*}^N}[f_x \mid \bI_x^M] dt \bigg | \right]
      \leq C \int_0^T  H(\mu_t^N | \nu_t^N) dt
  + C' \Big( \frac{N^{2a}}{M^d} + \frac{M^2}{N^2} \Big) N^{d-2a},
\end{equation*}
where the constants $C, C' > 0$ are independent of $N$. 
\end{lem}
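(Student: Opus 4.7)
The plan is to expand $\Phi_{t,x} := E_{\nu_{\bp_*}^N}[f_x \mid \bI_x^M]$ as a quadratic function of $\bI_x^M - \bp_x$ plus controllable remainders via the equivalence of ensembles and Taylor, then bound the resulting expression under $\mu_t^N$ via the entropy inequality. For $M$ large enough $f_x$ is supported on $\Lambda_{x,M}$, so conditioning $\nu_{\bp_*}^N$ on $\bI_x^M = \bi$ reduces to the canonical measure $\nu_{M,\bi}$. The standard equivalence of ensembles for a smooth local observable on a block of size $|\Lambda_M|$ gives $E_{\nu_{M,\bi}}[\tau_x h] = \tilde h(\bi) + O(|\Lambda_M|^{-1})$, uniformly in $\bi$ near $\bp_*$. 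Combined with the symmetry identity $E_{\nu_{\bp_*}^N}[\bar\bI_x \mid \bI_x^M] = \bI_x^M - \bp_x$ and a second-order Taylor expansion of $\tilde h$ around $\bp_x$, this yields
\[
\Phi_{t,x} = \tfrac12 (\bI_x^M - \bp_x)^T D^2\tilde h(\bp_x)(\bI_x^M - \bp_x) + O(|\bI_x^M - \bp_x|^3) + O(|\Lambda_M|^{-1}).
\]

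To isolate the deterministic from the random contributions, I would then write $\bI_x^M - \bp_x = R_x + S_x$, with $S_x := |\Lambda_M|^{-1}\sum_{y\in\Lambda_{x,M}}(\bp_y - \bp_x)$ the $\nu_t^N$-mean of $\bI_x^M - \bp_x$ and $R_x := \bI_x^M - \bp_x - S_x$ its centered fluctuation. The Lipschitz bound $|\bp_y - \bp_x| \leq CMN^{-1-a}$ gives $|S_x|^2 \leq CM^2 N^{-2-2a}$, accounting for two deterministic contributions: the $O(|\Lambda_M|^{-1})$ remainder, which summed over $x$ and integrated in $t$ yields the first error $CN^d/M^d$; and $\sum_x|S_x|^2$, which yields the second error $CM^2 N^{d-2-2a}$. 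It remains to bound the centered random quadratic $\sum_x |a_x||R_x|^2$ under $\mu_t^N$ by $CH(\mu_t^N|\nu_t^N) + CN^d/M^d$.

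For this I would apply the entropy inequality with a constant $\gamma > 0$:
\[
E_{\mu_t^N}\Big[\sum_x|a_x||R_x|^2\Big] \leq \frac{H(\mu_t^N|\nu_t^N)}{\gamma} + \frac{1}{\gamma}\log E_{\nu_t^N}\Big[\exp\Big(\gamma\sum_x|a_x||R_x|^2\Big)\Big].
\]
Since $R_x$ is a normalized sum of the independent, centered, bounded random variables $\{\bI_y - \bp_y\}_{y\in\Lambda_{x,M}}$, the exponential moment under the product measure $\nu_t^N$ can be controlled by partitioning $\T_N^d$ into $|\Lambda_M|$ translates of a sub-lattice on which the boxes $\Lambda_{x,M}$ are pairwise disjoint (so the $R_x$'s within each class are independent), applying H\"older's inequality across the classes, and estimating each independent single-site factor by the refined bound $e^y \leq 1+y+\tfrac12 y^2 e^{|y|}$ combined with the variance estimate $E_{\nu_t^N}[|R_x|^2] = O(|\Lambda_M|^{-1})$. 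A suitable choice of constant $\gamma$ then gives $\frac{1}{\gamma}\log E_{\nu_t^N}[\cdots] \leq CN^d/|\Lambda_M|$. The main obstacle is precisely this exponential-moment step: one must choose $\gamma$ small enough that the H\"older-inflated single-site parameter $\gamma|\Lambda_M|$ does not overwhelm the $O(|\Lambda_M|^{-1})$ variance of each $|R_x|^2$, yet bounded away from $0$ so that $H/\gamma = O(H)$. The subtraction of the deterministic drift $S_x$ is essential here, as it transforms each summand from an $O(1)$-size quantity into one with variance $O(|\Lambda_M|^{-1})$, making this balance possible.
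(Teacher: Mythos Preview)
Your approach is essentially the paper's: equivalence of ensembles to replace $E_{\nu_{\bp_*}^N}[f_x\mid\bI_x^M]$ by $\tilde f_x(\bI_x^M)+O(|\Lambda_M|^{-1})$, Taylor expansion to get a quadratic in $\bI_x^M-\bp_x$, the decomposition $\bI_x^M-\bp_x=R_x+S_x$ (the paper writes $\bar\bI_x^M$ and $\bp_x^M-\bp_x$ for your $R_x,S_x$), and finally the entropy inequality together with a concentration bound for $\sum_x|R_x|^2$.

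There is one genuine gap. You correctly note that the equivalence of ensembles and the Taylor expansion of $\tilde h$ hold only for $\bi$ near $\bp_*$; but $\bI_x^M$ is random and can be far from $\bp_*$, and in fact $\tilde h(\bi)=E_{\nu_\bi^N}[h]$ involves $\bLam(\bi)$, which is only defined on a ball $B(\bp_*,2r)$. Hence your displayed expansion of $\Phi_{t,x}$ is not even well-defined on all of $X_N$. The paper resolves this by first splitting on the event $\{|\bar\bI_x^M|<r\}$. On the bad event one uses the crude bound $\mathbf 1_{\{|\bar\bI_x^M|\ge r\}}|\Phi_{t,x}|\le r^{-2}\|f\|_\infty|\bar\bI_x^M|^2$, which feeds into the very same quadratic estimate you already need for $R_x$; on the good event $\bI_x^M\in B(\bp_*,2r)$ and your argument goes through. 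Without this split the cubic remainder and the $O(|\Lambda_M|^{-1})$ correction are not justified globally.

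A smaller technical point concerns the exponential-moment step. After H\"older the relevant factor is $E_{\nu_t^N}[\exp(\gamma|\Lambda_M|\,|R_x|^2)]$. Applying $e^y\le 1+y+\tfrac12 y^2 e^{|y|}$ directly to $y=\gamma|\Lambda_M|\,|R_x|^2$ does not work, since $|y|$ can be of order $|\Lambda_M|$ on atypical configurations and the $e^{|y|}$ factor is uncontrolled. What is actually needed is a sub-Gaussian concentration inequality for the normalized sum $R_x$ of bounded, centered, independent variables, giving $E_{\nu_t^N}[\exp(c|\Lambda_M|\,|R_x|^2)]\le C$ for a fixed small $c$; the paper cites this as \cite[Lemma 3.6]{FT}. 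Your intuition about the balance in $\gamma$ is right, but the elementary inequality you name is not the correct tool at this spot.
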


\begin{proof}
First, we apply the uniform bound on $a_x$ to estimate
\begin{equation*} 
\E_{\mu^N} \left[ \bigg |\int_0^T \sum_{x \in \T_N^d} a_x E_{\nu_{\bp_*}^N}[f_x \mid \bI_x^M] dt \bigg | \right]
\leq C \E_{\mu^N} \left[ \int_0^T \sum_{x \in \T_N^d} \big| E_{\nu_{\bp_*}^N}[f_x \mid \bI_x^M] \big| dt \right].
\end{equation*}
By Proposition \ref{p:nutN} there exists an $r > 0$ such that $\bLam$ is defined on the ball $B(\bp_*, 2r)$. Setting,
\begin{align*}
\bar \bI_x^M := \bI_x^M - \bp_x^M, \quad 
 \bp_x^M := \dfrac{1}{|\Lambda_M|}\sum_{y \in {\Lambda_{x,M}}} \bp_y,
\end{align*}
we split the right-hand side above as
\begin{multline} \label{pfxk}
  C \E_{\mu^N} \left[ \int_0^T \sum_{x \in \T_N^d} {\bf 1}_{\{|\bar \bI_x^M| \ge r\}} \big| E_{\nu_{\bp_*}^N}[f_x \mid \bI_x^M] \big| dt \right] \\
  + C \E_{\mu^N} \left[ \int_0^T \sum_{x \in \T_N^d} {\bf 1}_{\{|\bar \bI_x^M| < r\}} \big| E_{\nu_{\bp_*}^N}[f_x \mid \bI_x^M] \big| dt \right].
\end{multline}

We estimate both terms in \eqref{pfxk} separately. We start with the first term. Using that under the event $\{|\bar \bI_x^M| \ge r \}$ we have $1\le r^{-2}|\bar \bI_x^M|^2$, we bound
\begin{align} \label{pfxj}
C \E_{\mu^N} \left[ \int_0^T \sum_{x \in \T_N^d} {\bf 1}_{\{|\bar \bI_x^M| \ge r\}} \big| E_{\nu_{\bp_*}^N}[f_x \mid \bI_x^M] \big| dt \right]
\leq C r^{-2} \| f \|_\infty \int_0^T E_{\mu_t^N}\left[ \sum_{x \in \T_N^d} |\bar \bI_x^M|^2 \right] dt.
\end{align}
Applying the entropy inequality with constant $\lambda > 0$, we get
\[
  E_{\mu_t^N} {\Big[} \sum_{x \in \T_N^d} \big| \bar \bI_x^M \big|^2 {\Big]}
  \leq \frac1\lambda H(\mu_t^N | \nu_t^N) 
  +  \frac1\lambda \log E_{\nu_t^N} {\bigg[} \exp \bigg( \lambda \sum_{x \in \T_N^d} \big| \bar \bI_x^M \big|^2 \bigg) {\bigg]}.
\]
For the second term, we apply the usual argument for taking $\sum_x$ in front, which relies on H\"older's inequality and the observation that $\bar \bI_x^M$ and $\bar \bI_y^M$ are independent under $\nu_t^N$ if $| x - y |_\infty \geq 2M+1$. Details can be found in \cite[Lemma 4.2 and Lemma B.4]{JM1}. This yields \comm{[for 2nd ineq: p.89 top]}
\begin{align*}
  &\frac1\lambda \log E_{\nu_t^N} {\bigg[} \exp \bigg( \lambda \sum_{x \in \T_N^d} \big| \bar \bI_x^M \big|^2 \bigg) {\bigg]} \\
  &\leq \frac1{2^d \lambda |\Lambda_M|} \sum_{x \in \T_N^d} \log E_{\nu_t^N} {\Big[} \exp \Big( 2^d \lambda |\Lambda_M| \big| \bar \bI_x^M \big|^2 \Big) {\Big]} \\
  &\leq \frac1{2^d \lambda |\Lambda_M|} \max_{0 \leq k \leq d} \sum_{x \in \T_N^d} \log E_{\nu_t^N} {\Big[} \exp \Big( 2^d (d+1) \lambda |\Lambda_M| \big( \bar I_{x,k}^M \big)^2 \Big) {\Big]}.
\end{align*}
Next we apply the concentration inequality \cite[Lemma 3.6]{FT}. With this aim, we fix $k$ and take
\[
\lambda := \frac1{2^d (d+1) C_{\cV}^2}, \qquad
C_{\cV} := |\cV| + 2 \sum_{v \in \cV} |v|.
\] 
Since under $\nu_t^N$ the random variables $\{ \frac1{C_\cV}\bar I_{y,k} \}_{y \in {\Lambda_{x,M}}}$ are independent, have mean zero and take values in an interval of length less than $1$, the concentration inequality states that 
\[
  E_{\nu_t^N} {\Big[} \exp \Big( 2^d (d+1) \lambda |\Lambda_M| \big( \bar I_{x,k}^M \big)^2 \Big) {\Big]}
  = E_{\nu_t^N} {\bigg[} \exp \bigg( |\Lambda_M|^{-1} \bigg( \sum_{y \in {\Lambda_{x,M}}} \frac1{C_\cV} \bar I_{y,k} \bigg)^2 \bigg) {\bigg]}
  \leq e^2.
\]
In conclusion, we have obtained the following bound on the first term in the right-hand side of \eqref{pfxk}:
\begin{equation} \label{pfxi}
  C \E_{\mu^N} \left[ \int_0^T \sum_{x \in \T_N^d} {\bf 1}_{\{|\bar \bI_x^M| \ge r\}} \big| E_{\nu_{\bp_*}^N}[f_x \mid \bI_x^M] \big| dt \right]
  \leq C' \int_0^T  H(\mu_t^N | \nu_t^N) dt
  + C'' \frac{N^d}{M^d}.
\end{equation}
 
Next we estimate the second term in the right-hand side of \eqref{pfxk}.
By \cite[Proposition 7.1]{BL} (with $M$ large enough with respect to $\| \bphi \|_\infty$, $\| \nabla \bphi \|_\infty$ and the support of $f$, and noting that its proof applies verbatim to any finite set $\cV$) this term is bounded from above by \comm{[Further notes on application Prop 7.1. 1) the $\ell$ there needs to be st $\supp f \subset \Lambda_\ell$, and thus $\ell$ can be chosen indep of $N$. Since $\Lambda_\ell \subset \Lambda_M$ is required, it needs $M$ to be large enough. 2) while the SIPS in BL is different, Prop 7.1 only considers the invariant measures and projections thereof on ensembles; those ARE all the same as in our setting. Hence, this does not make Prop. 7.1 invalid to our case. 3) since Prop. 7.1 only deals with the expec.\ inside the summand, we get an additional factor $N^d$ below due to $\sum_x$, and $C'$ depends in addition on $T, \| f \|_\infty, \supp f$. 4) see p.91 for hints.]} 
\begin{equation}\label{pfys} 
  C \E_{\mu^N} \left[ \int_0^T \sum_{x \in \T_N^d} {\bf 1}_{\{|\bar \bI_x^M| < r\}} \big| \tilde f_x (\bI_x^M) \big| dt \right] 
  + \frac{C'}{|\Lambda_M|} N^d,
\end{equation}
where $C' > 0$ is a constant which only depends on $T, \| f \|_\infty, \supp f, \cV$  and the function $\tilde f_x$ is, similar to $\tilde h$, defined on $B(\bp_*, 2r)$ by $\tilde f_x(\bq) := E_{\nu_{\bq}^N}[f_x]$. Note that, under the event $\{|\bar \bI_x^M| < r\}$, the argument of $\tilde f_x$ satisfies $\bI_x^M \in B(\bp_*, 2r)$ since \comm{[Above, we cannot concisify `$\| f \|_\infty, \supp f$' by $f$, because the latter depends on $N$ through $\bp_x$ while the former does not.]}
\begin{equation*}
  |\bI_x^M - \bp_*| 
  \leq |\bar \bI_x^M| + |\bp_x^M - \bp_x| + |\bp_x - \bp_*|,
\end{equation*}
where $|\bar \bI_x^M| < r$ and
\begin{align} \label{pfxh}
  |\bp_x^M - \bp_x|
  &\leq \max_{y \in {\Lambda_{x,M}} } |\bp_y - \bp_x|
  = N^{-a} \max_{y \in {\Lambda_{x,M}} } |\bphi_y - \bphi_x|
  \leq C M N^{-1-a} \| \nabla \bphi \|_\infty, \\\notag
  |\bp_x - \bp_*|
  &= N^{-a} |\bphi_x| \leq \| \bphi \|_\infty N^{-a}
\end{align}
are smaller than $\frac r2$ for $N$ large enough.

Next we bound the first term in \eqref{pfys}. Recalling the definition of $f_x$ in \eqref{fx} and that $E_{\nu_{\bq}^N} \bar \bI_x = \bq - \bp_x$, we obtain 
\begin{align*}
  \tilde f_x(\bq) 
  &= E_{\nu_{\bq}^N}[ \tau_x h(\eta) -\tilde h(\bp_x) - \nabla \tilde h (\bp_x) \cdot \bar \bI_x ]  \\
  &=  \tilde h(\bq) -\tilde h(\bp_x) - \nabla \tilde h (\bp_x) \cdot ( \bq - \bp_x ).
\end{align*}
Noting that the last two terms are the negative of the first order Taylor approximation of $\tilde h$ at $\bp_x$, we obtain
\[
  \big| \tilde f_x (\bI_x^M) \big| 
  \leq C |\bI_x^M - \bp_x|^2.
\]
With this bound we estimate the first term in \eqref{pfys} from above by
\begin{equation*} 
C \E_{\mu^N} \left[ \int_0^T \sum_{x \in \T_N^d} \big| \bar \bI_x^M \big|^2 dt \right]
  + C \int_0^T \sum_{x \in \T_N^d} \big| \bp_x^M - \bp_x \big|^2 dt.
\end{equation*}
Recalling \eqref{pfxj}, we have already bounded the first term from above by the expression in \eqref{pfxi}. For the second term we use \eqref{pfxh} to obtain 
\[
  C \int_0^T \sum_{x \in \T_N^d} \big| \bp_x^M - \bp_x \big|^2 dt
  \leq C' M^2 N^{d-2-2a}.
\]
By collecting the estimates above the proof of Lemma \ref{l:BG:box} is complete.
\end{proof}

Finally, we use Lemmas \ref{l:BG:mx} and \ref{l:BG:box} to prove Theorem \ref{t:BG}.

\begin{proof}[Proof of Theorem \ref{t:BG}]
Applying Lemmas \ref{l:BG:mx} and \ref{l:BG:box} to \eqref{pfyt} we get
\begin{align} \label{pfxl}
\E_{\mu^N}\left[ \left| \int_0^T \sum_{x\in\T_N^d} a_{t,x}^N f_{t,x} dt \right| \right] 
\leq 
C \int_0^T H(\mu_t^N|\nu_t^N) dt + C' \bigg( \frac{ M^{\tfrac{\kappa + d}2} }{N^{1-a}} + \frac{N^{2a}}{M^d} + \frac{M^2}{N^2} \bigg) N^{d-2a}.
\end{align}
Taking \comm{[The following choice is based on plugging in $M^\alpha$, ignoring $(M/N)^2$, computing the critical values of $\alpha$ for which either of the other two terms equal $1$, and then taking $\alpha$ as the average of these critical values.]}
\[
  M = \big \lfloor N^{\tfrac{\kappa a + d}{d(\kappa + d)}} \big \rfloor
\]
and relying on $a < \frac d{\kappa + 2d}$,
it follows from a direct computation that the required bounds on $M$ in \eqref{l:L10:6:assn} are met and that the factor in parentheses in the right hand side of \eqref{pfxl} is bounded from above by $N^{- \e_0}$ with
\begin{equation} \label{pfyh}
  \e_0 
  := \min \Big\{ 
     \frac12 - a \frac{\kappa + 2d}{2d},  \
     \frac d{\kappa + d} - a \frac{\kappa + 2d}{\kappa + d}, \
     2 \Big( 1 - \frac1{\kappa + d} - a \frac \kappa{\kappa + d} \Big)
     \Big\} > 0.
\end{equation}
This completes the proof.
\end{proof}

\section{Proof of Proposition \ref{p:cV} on admissible velocity sets $\cV$}
\label{s:SG}

This section is the proof of Proposition \ref{p:cV}. We adopt the notation in and leading up to Proposition \ref{p:cV}.

\subsection{Proof of Proposition \ref{p:cV}\ref{p:cV:AV}}
  
Note that $v\in\cV$ implies $\bv = (1, v_* \pm v_\ell) \in \R \times \R^d$ for some $1 \leq \ell \leq n$ and that $\sum_{v \in \cV} v = 2n v_*$. Hence,
\begin{align*}
  \sum_{v \in \cV} \bv \otimes \bv
  = \sum_{v \in \cV} \begin{bmatrix}
  1 & v^T \\ v & v \otimes v
\end{bmatrix}      
  = 2 \begin{bmatrix}
  n & n v_*^T \\ n v_* & n v_* \otimes v_* + \sum_{\ell = 1}^n v_\ell \otimes v_\ell
\end{bmatrix}.
\end{align*}  
Thus, for any $\ba := (a_0, a) \in \R \times \R^d$
\begin{align*}
  \frac12 \ba^T \Big( \sum_{v \in \cV} \bv \otimes \bv \Big) \ba
  = n \big( a_0^2 + 2 a_0 (a \cdot v_*) + (a \cdot v_*)^2 \big) + \sum_{\ell = 1}^n (a \cdot v_\ell)^2.
\end{align*} 
Since $\{v_\ell\}_{\ell=1}^n$ spans $\R^d$, there exists $1 \leq m \leq n$ such that $|a \cdot v_m| = c |a|$ for some $c > 0$ independent of $a$. Then, \comm{[as otherwise $\lrang{ v_\ell }_\ell \in \lrang{ a }^\perp$ of dimension $d-1$, which contradicts]} 
\begin{align*}
  \sum_{\ell = 1}^n (a \cdot v_\ell)^2 
  \geq (a \cdot v_m)^2 
  = c^2 |a|^2.
\end{align*}
Using this together with (set $\delta := c^2 [n (|v_*|^2 + 1)]^{-1} > 0$)
\begin{align*}
  2 a_0 (a \cdot v_*)
  = 2 \frac{a_0}{ \sqrt{1 + \delta} } (\sqrt{1 + \delta} \, a \cdot v_*)
  \geq - a_0^2 \Big( 1 - \frac\delta{1 + \delta } \Big) - (1 + \delta) (a \cdot v_*)^2,
\end{align*}
we obtain
\begin{align*}
  \frac12 \ba^T \Big( \sum_{v \in \cV} \bv \otimes \bv \Big) \ba
  &\geq n \Big( \frac\delta{1 + \delta } a_0^2 - \delta (a \cdot v_*)^2 + \delta (|v_*|^2 + 1) |a|^2 \Big) \\
  &\geq n \delta \Big( \frac{a_0^2}{1 + \delta }  + |a|^2 \Big) 
  \geq \frac{n \delta}{1 + \delta} |\ba|^2.
\end{align*}
Hence, $\sum_{v \in \cV} \bv \otimes \bv$ is positive definite, and thus invertible.

\subsection{Proof of Proposition \ref{p:cV}\ref{p:cV:sg}} 
   
Let $M \geq 1$ and $\bi \in \cD_M$ be given. Recall the micro-canonical space $Y_M(\bi)$ and the corresponding uniform measure $\nu_{M,\bi}$ defined in \eqref{mic-can space} and \eqref{mic-can measure}, respectively.
Any generic constant $C$ that appears below in this section is independent of $(M,\bi)$.

For the proof of Proposition \ref{p:cV}\ref{p:cV:sg} we introduce some notation and establish several lemmas. 
Recall the restricted generators $L_M^{ex,s}$, $L_M^{c}$ introduced in \eqref{restricted operators}.
Define furthermore the mean-field symmetric exclusion generator
\begin{equation*} 
  \tilde L_M^{ex,s}f(\eta) := \frac1{|\Lambda_M|}\sum_{v\in\cV} \sum_{x,y\in\Lambda_M} \eta_x(v)\big(1-\eta_y(v)\big) \big[f(\eta^{x,y,v})-f(\eta)\big],
\end{equation*}
and similarly the mean-field collision operator
\begin{equation*} 
  \tilde L_M^cf(\eta) := \frac1{|\Lambda_M|^3}\sum_{q \in \cQ} \sum_{r\in\Lambda_M^4} p(r,q,\eta)\big[f(\eta^{r,q})-f(\eta)\big],
\end{equation*}
where for $r=(x,y,x',y')$ and $q=(v,w,v',w')$,
\begin{equation*}
  p(r,q,\eta)=\eta_x(v)\eta_y(w)\big[1-\eta_{x'}(v')\big]\big[1-\eta_{y'}(w')\big],
\end{equation*}
and $\eta^{r,q}$ is the configuration obtained from $\eta$ by flipping $\eta_x(v)$, $\eta_y(w)$, $\eta_{x'}(v')$ and $\eta_{y'}(w')$.
Direct calculations show that \comm{[p.77]}
\begin{align} \label{LMexs}
  \langle f,-L_M^{ex,s}f \rangle_{\nu_{M,\bi}} 
  &= \frac14  \sum_{v \in \cV} \sum_{\substack{x,y\in\Lambda_M \\  |x-y|=1}} E_{\nu_{M,\bi}} \left[ \big(f(\eta^{x,y,v})-f(\eta)\big)^2 \right],\\  
  \langle f,-L_M^{c}f \rangle_{\nu_{M,\bi}} 
  &= \frac12 \sum_{q \in Q} \sum_{x \in\Lambda_M} E_{\nu_{M,\bi}} \left[ p(x,q,\eta)\big(f(\eta^{x,q})-f(\eta)\big)^2 \right],\\\label{tLMexs}
  \langle f,-\tilde L_M^{ex,s}f \rangle_{\nu_{M,\bi}} 
  &= \frac1{ 4 |\Lambda_M|} \sum_{v \in \cV} \sum_{x,y\in\Lambda_M } E_{\nu_{M,\bi}} \left[ \big(f(\eta^{x,y,v})-f(\eta)\big)^2 \right],\\ \label{tLMc}
  \langle f,-\tilde L_M^cf \rangle_{\nu_{M,\bi}} 
  &= \frac1{2|\Lambda_M|^3} \sum_{q \in \cQ} \sum_{r\in\Lambda_M^4} E_{\nu_{M,\bi}} \left[ p(r,q,\eta)\big(f(\eta^{r,q})-f(\eta)\big)^2 \right].
\end{align} 

Let $\bK=(K_v)_{v\in\cV}$ be the number of particles for each species:
\begin{equation} \label{bK}
  \bK(\eta)=(K_v(\eta))_{v\in\cV}, \quad K_v(\eta) := \sum_{x\in\Lambda_M} \eta_x(v).
\end{equation}
Since the exclusion dynamics conserves $\bK$, for $\bk\in\{0,\ldots,|\Lambda_M|\}^\cV$ the corresponding micro-canonical ensemble reads
\begin{equation*}
  Y_{M,\bk} := \big\{\eta\in \{0,1\}^{\Lambda_M\times\cV} \,|\, \bK(\eta)=\bk\big\}.
\end{equation*} 
Note that the mass-momentum vector $\bI_0^M(\eta)$ is determined by $\bK(\eta)$:
\begin{equation*} 
  \bI_0^M(\eta) 
  = \frac1{|\Lambda_M|} \sum_{x\in\Lambda_M } \sum_{v \in \cV}  \eta_x(v) \bv
  = \frac1{|\Lambda_M|} \sum_{v\in\cV} K_v(\eta) \bv, \quad \forall\,\eta\in Y_{M} (\bi).
\end{equation*}
Let
\begin{equation*}
  \cD_{M,\bi} := \left\{ \bk \in \{0,\ldots,|\Lambda_M|\}^\cV \,\bigg|\, \sum_{v\in\cV} k_v \bv = \bi \right\}
\end{equation*}
be the set of particle number vectors corresponding to $\bi$.
Note that $Y_M(\bi) = \cup_{\bk\in\cD_{M,\bi}} Y_{M,\bk}$.
We use this to disintegrate the uniform measure $\nu_{M,\bi}$ into $\nu_{M|\bk}:=\nu_{M,\bi}(\,\cdot\,|Y_{M,\bk})$ and
\begin{align*} 
  \bar\nu_{M,\bi}(\bk) := \nu_{M,\bi}(Y_{M,\bk}) = \frac1{Z_{M,\bi}}\prod_{v\in\cV} \binom{|\Lambda_M|}{k_v} 
\end{align*}
for a certain normalization constant $Z_{M,\bi}$.

We state several lemmas which we use in the proof of Proposition \ref{p:cV}\ref{p:cV:sg}.
Recall that the constants $C$ appearing in these lemmas do not depend on $M$ or $\bi$.
The first one is a standard estimate for the Dirichlet form of the mean-field type exclusion.
We postpone the proof to the end of this section.
 
\begin{lem}\label{lem:sg-mf}
There exists $C > 0$ such that for any $f : Y_M(\bi) \to \R$, 
\begin{equation*}
  \big\langle f,-\tilde L_M^{ex,s}f \big\rangle_{\nu_{M,\bi}} \le CM^2\big\langle f, -L_M^{ex,s}f \big\rangle_{\nu_{M,\bi}}.
\end{equation*}
\end{lem}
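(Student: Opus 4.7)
The plan is to apply the classical moving-particle lemma: bound each long-range exchange in $\tilde L_M^{ex,s}$ by a sum of nearest-neighbor exchanges along a path in $\Lambda_M$, and then exploit the invariance of $\nu_{M,\bi}$ under nearest-neighbor swaps.

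First, for each ordered pair $(x,y)\in\Lambda_M^2$ with $x\neq y$, I would fix a nearest-neighbor path $x=z_0,z_1,\ldots,z_k=y$ in $\Lambda_M$ of length $k=\ell(x,y)\le 2dM$, obtained by changing one coordinate at a time. The transposition of $x$ and $y$ then decomposes as a product of $2k-1$ adjacent transpositions,
\begin{equation*}
(x,y)=(z_0,z_1)(z_1,z_2)\cdots(z_{k-1},z_k)(z_{k-2},z_{k-1})\cdots(z_0,z_1).
\end{equation*}
Writing $\eta^{(0)}=\eta,\eta^{(1)},\ldots,\eta^{(2k-1)}=\eta^{x,y,v}$ for the configurations produced by performing these adjacent swaps for species $v$ one at a time, the telescoping identity combined with Cauchy--Schwarz gives
\begin{equation*}
\big(f(\eta^{x,y,v})-f(\eta)\big)^2
\le (2k-1)\sum_{j=1}^{2k-1}\big(f(\eta^{(j)})-f(\eta^{(j-1)})\big)^2.
\end{equation*}
Every adjacent swap preserves the particle-number vector $\bK$ and therefore preserves $Y_M(\bi)$, so $\nu_{M,\bi}$ is invariant under each such bijection. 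Changing variables in the expectation, each term on the right equals $E_{\nu_{M,\bi}}\bigl[(f(\eta^{u_j,u_j',v})-f(\eta))^2\bigr]$ where $(u_j,u_j')$ is the nearest-neighbor edge of the $j$-th adjacent swap.

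Taking expectations and summing over $v\in\cV$ and $(x,y)\in\Lambda_M^2$, then regrouping by edge, I would obtain
\begin{equation*}
\sum_{v,(x,y)}E_{\nu_{M,\bi}}\big[(f(\eta^{x,y,v})-f(\eta))^2\big]
\le (2dM)\sum_{v}\sum_{\substack{u,u'\in\Lambda_M\\|u-u'|=1}}c(u,u')\,E_{\nu_{M,\bi}}\big[(f(\eta^{u,u',v})-f(\eta))^2\big],
\end{equation*}
where $c(u,u')$ counts, with multiplicity, how many pair-decompositions use the edge $\{u,u'\}$. Comparing with the explicit formulas \eqref{LMexs} and \eqref{tLMexs}, and recalling the factor $|\Lambda_M|^{-1}$ in the mean-field Dirichlet form, it suffices to show $c(u,u')\le CM|\Lambda_M|$ to conclude $\langle f,-\tilde L_M^{ex,s}f\rangle_{\nu_{M,\bi}}\le CM^2\langle f,-L_M^{ex,s}f\rangle_{\nu_{M,\bi}}$.

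The remaining work, and the main obstacle, is the edge-counting bound. With the coordinate-by-coordinate paths, an edge of the form $(u,u+e_k)$ is used in the decomposition of $(x,y)$ only when $y_j=u_j$ for $j<k$, $x_j=u_j$ for $j>k$, and both $u_k$ and $u_k+1$ lie in the interval spanned by $x_k$ and $y_k$; moreover the multiplicity of that edge in a single decomposition is at most $2$. Counting the free coordinates produces $(2M+1)^{k-1}$ choices for $(x_j)_{j<k}$, $(2M+1)^{d-k}$ choices for $(y_j)_{j>k}$, and $O(M^2)$ choices for $(x_k,y_k)$, giving $c(u,u')\le CM^{d+1}=CM|\Lambda_M|$ as needed. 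Everything else is routine bookkeeping, so once the path family and its edge-multiplicity bound are in place the lemma follows.
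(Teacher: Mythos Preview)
Your proposal is correct and follows essentially the same route as the paper: decompose the long-range swap $\eta\mapsto\eta^{x,y,v}$ into $2k-1$ nearest-neighbor swaps along a coordinate-by-coordinate path of length $k\le 2dM$, apply Cauchy--Schwarz to the telescoping sum, use the invariance of $\nu_{M,\bi}$ under each swap to reduce to nearest-neighbor Dirichlet terms, and finish with the edge-counting bound $c(u,u')\le CM^{d+1}$. The paper's proof is organized identically, with the same path construction and the same combinatorial count (phrased there as ``any bond $(z,z')$ of $\Lambda_M$ appears at most $(2M+1)^{d+1}$ times'').
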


The next lemma is taken from \cite[Lemma 6.2]{BL}, which allows us to replace the original collision dynamics with the long-range one.
Note that it holds generally for any finite velocity set $\cV$.
We omit the proof.

\begin{lem}\label{lem:sg-lr}
There exists $C > 0$ such that for any $f : Y_M(\bi) \to \R$,
\begin{equation*}
  \big\langle f,-\tilde L_M^{c}f \big\rangle_{\nu_{M,\bi}} \le C\big\langle f,-\tilde L_M^{ex,s}f \big\rangle_{\nu_{M,\bi}} + C\big\langle f,-L_M^cf \big\rangle_{\nu_{M,\bi}}.
\end{equation*}
\end{lem}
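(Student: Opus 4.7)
The plan is to apply a moving-particle (path) argument to compare the mean-field Dirichlet form with the nearest-neighbor one. From the explicit formulas \eqref{LMexs} and \eqref{tLMexs}, the claim reduces to proving
\[
  \sum_{v\in\cV} \sum_{x,y\in\Lambda_M} E_{\nu_{M,\bi}}\!\bigl[(f(\eta^{x,y,v})-f(\eta))^2\bigr]
  \le C M^{d+2} \sum_{v\in\cV} \sum_{\substack{x,y\in\Lambda_M\\|x-y|=1}} E_{\nu_{M,\bi}}\!\bigl[(f(\eta^{x,y,v})-f(\eta))^2\bigr],
\]
since the normalization $|\Lambda_M|^{-1}\sim M^{-d}$ attached to $\tilde L_M^{ex,s}$ converts the prefactor $M^{d+2}$ into the claimed $M^2$.

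For each ordered pair $(x,y)$ of distinct sites in $\Lambda_M$, I would fix a nearest-neighbor path $\gamma_{xy}=(z_0=x,z_1,\ldots,z_k=y)$ of length $k\le CM$ (for instance, along coordinate directions in a fixed order). The algebraic key is the ``bubble-sort'' decomposition of the long transposition $(x,y)$ into $2k-1$ adjacent transpositions, which at the level of occupancy swaps reads
\[
  \eta^{x,y,v} = \tau^{z_0,z_1,v}\,\tau^{z_1,z_2,v}\cdots \tau^{z_{k-1},z_k,v}\,\tau^{z_{k-2},z_{k-1},v}\cdots \tau^{z_0,z_1,v}\,\eta,
\]
where $\tau^{a,b,v}$ denotes the swap of $v$-occupancies at $a$ and $b$. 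Composing yields an intermediate chain $\sigma_0=\eta,\sigma_1,\ldots,\sigma_{2k-1}=\eta^{x,y,v}$ in which each transition $\sigma_{j+1}=\sigma_j^{a_j,b_j,v}$ is a nearest-neighbor swap along an edge of $\gamma_{xy}$, each edge being reused at most twice. Telescoping and Cauchy--Schwarz then yield
\[
  \bigl(f(\eta^{x,y,v})-f(\eta)\bigr)^2 \le (2k-1)\sum_{j=0}^{2k-2}\bigl(f(\sigma_j^{a_j,b_j,v})-f(\sigma_j)\bigr)^2.
\]

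The crucial observation is that the deterministic map $\eta\mapsto\sigma_j(\eta)$ is a bijection of $Y_M(\bi)$ onto itself: it only permutes sites within a single species, hence preserves $\bI_0^M$ and thus the uniform measure $\nu_{M,\bi}$. A change of variable therefore rewrites $E_{\nu_{M,\bi}}[(f(\sigma_j^{a_j,b_j,v})-f(\sigma_j))^2]$ as the nearest-neighbor summand $E_{\nu_{M,\bi}}[(f(\eta^{a_j,b_j,v})-f(\eta))^2]$. Summing over $(x,y,v)$ the factor $2k-1\le CM$ comes out, and it remains to count the multiplicity $N_e$ of each nearest-neighbor edge of $\Lambda_M$ across the path family. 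A coordinate-ordered canonical path (or, more symmetrically, averaging over monotone lattice paths) gives $N_e\le CM^{d+1}$, producing the desired prefactor $CM\cdot M^{d+1}=CM^{d+2}$. The only non-trivial step is this path counting: heuristically, dividing the total edge-visits $\le CM\cdot|\Lambda_M|^2$ by the number of edges $\sim M^d$ yields $M^{d+1}$ on average, and a concrete canonical-path choice makes this bound uniform in $e$.
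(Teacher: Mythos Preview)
You have proved the wrong lemma. Your argument compares $\langle f,-\tilde L_M^{ex,s}f\rangle_{\nu_{M,\bi}}$ with $\langle f,-L_M^{ex,s}f\rangle_{\nu_{M,\bi}}$, which is precisely the content of Lemma~\ref{lem:sg-mf}, not Lemma~\ref{lem:sg-lr}. The statement at hand concerns the \emph{collision} operators: one must bound the mean-field collision Dirichlet form
\[
  \langle f,-\tilde L_M^{c}f\rangle_{\nu_{M,\bi}}
  = \frac1{2|\Lambda_M|^3}\sum_{q\in\cQ}\sum_{r\in\Lambda_M^4} E_{\nu_{M,\bi}}\!\left[p(r,q,\eta)\bigl(f(\eta^{r,q})-f(\eta)\bigr)^2\right]
\]
by $C\langle f,-\tilde L_M^{ex,s}f\rangle_{\nu_{M,\bi}}+C\langle f,-L_M^{c}f\rangle_{\nu_{M,\bi}}$. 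Here $\eta^{r,q}$ flips four occupancies at four distinct sites $r=(x,y,x',y')$ according to the collision $q=(v,w,v',w')$; this is not a long-range exchange within a single species and cannot be decomposed into nearest-neighbor swaps alone.

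The correct route (this is \cite[Lemma~6.2]{BL}, which the paper cites and does not reprove) is a different telescoping: given $r=(x,y,x',y')$ and $q$, use three mean-field swaps $\tau^{y,x,w}$, $\tau^{x',x,v'}$, $\tau^{y',x,w'}$ to bring all four involved occupancies to the single site $x$, perform the on-site collision $\eta\mapsto\eta^{x,q}$ there, and then move the post-collision occupancies back with three more swaps. This writes $f(\eta^{r,q})-f(\eta)$ as a telescoping sum of seven increments, six of which are single-species swaps (contributing to $\tilde L_M^{ex,s}$) and one of which is an on-site collision (contributing to $L_M^c$). Cauchy--Schwarz and the invariance of $\nu_{M,\bi}$ under each elementary bijection then give the estimate, with the normalizations $|\Lambda_M|^{-3}$ on the left and $|\Lambda_M|^{-1}$ on $\tilde L_M^{ex,s}$ matching up because three of the four sums over $\Lambda_M$ collapse after the change of variables. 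Your path-counting argument is fine for Lemma~\ref{lem:sg-mf}, but it does not touch the collision mechanism at all.
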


The next lemma plays the key role in the proof of the spectral gap.
It turns out to be the only part where the specific choice of $\cV$ is necessary.
To state it, we introduce for $\bk \in \cD_{M,\bi}$ and $q=(v,w,v',w')\in\cQ$ the collision kernel
\begin{equation} \label{pqbk}
  p(q,\bk) := k_vk_w(|\Lambda_M|-k_{v'})(|\Lambda_M|-k_{w'})
\end{equation}
and the particle number vector $\bk^q=(k_v^q)_{v\in\cV}\in\{0,\ldots,|\Lambda_M|\}^\cV$ as
\begin{equation*} 
  k_u^q:=
  \begin{cases}
    k_u-1, &u=v,w,\\
    k_u+1, &u=v',w',\\
    k_u, &u \not= v,w,v',w'
  \end{cases}
  \quad \text{when } p(q,\bk)>0
\end{equation*}
and as $\bk^q := \bk$ when $p(q,\bk)=0$.
Notice that $\bk^q \in \cD_{M,\bi}$.

\begin{lem}\label{lem:sg-key}
Assume that $\cV$ is chosen as in \eqref{eq:cV} and that the condition in Proposition \ref{p:cV} \ref{p:cV:sg} holds.
Then, the following hold:
\begin{enumerate}
\item $|\cD_{M,\bi}| \le [(2M+1)^d+1]^{(n-1)}$;
\item Let $\bk^*\in\cD_{M,\bi}$ be a solution to
\begin{align}\label{eq:sg-key}
  \bar\nu_{M,\bi}(\bk) \ge \bar\nu_{M,\bi}(\bk^q), \quad \forall\,q\in\cQ.
\end{align}
For all $\bk\in\cD_{M,\bi}$, there exists an integer $\ell=\ell(\bk) \geq 0$ and a sequence $\bk_{(j)} \subset \cD_{M,\bi}$ with $j=0$, ..., $\ell$ with the following properties:
\begin{itemize}
   \item $\ell \le|\cD_{M,\bi}|$,
   \item $\bk_{(0)}=\bk$ and $\bk_{(\ell)}=\bk^*$,
   \item for each $j=0,\ldots,\ell-1$ there exists $q_j\in\cQ$ such that $\bk_{(j+1)}=[\bk_{(j)}]^{q_j}$,
   \item for each $j=0,\ldots,\ell-1$ we have $\bar\nu_{M,\bi}(\bk_{(j)})\le\bar\nu_{M,\bi}(\bk_{(j+1)})$.
 \end{itemize} 
In particular, $\bar\nu_{M,\bi}(\bk^*)=\max\{\bar\nu_{M,\bi}(\bk);\bk\in\cD_{M,\bi}\}$.
\end{enumerate}
\end{lem}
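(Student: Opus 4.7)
The plan is to reparametrize $\cD_{M,\bi}$ by exploiting the pair structure of $\cV$. For $\bk \in \cD_{M,\bi}$ set
\[
s_\ell(\bk) := k_{v_*+v_\ell} + k_{v_*-v_\ell}, \qquad m_\ell(\bk) := k_{v_*+v_\ell} - k_{v_*-v_\ell}, \qquad \ell=1,\ldots,n.
\]
Writing $\bi = (i_0, i)$, the constraint $\sum_v k_v \bv = \bi$ becomes $\sum_\ell s_\ell = i_0$ together with $\sum_\ell m_\ell v_\ell = i - i_0 v_*$. The integer-independence hypothesis of Proposition~\ref{p:cV}\ref{p:cV:sg} forces $(m_\ell)_{\ell=1}^n$ to be uniquely determined by $\bi$, so $\bk$ is fully encoded by $(s_\ell)_{\ell=1}^n$, and each $s_\ell$ satisfies $|m_\ell| \le s_\ell \le 2|\Lambda_M|-|m_\ell|$ with $s_\ell \equiv m_\ell \pmod{2}$. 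For part~(1), once $(s_1,\ldots,s_{n-1})$ are fixed the last coordinate $s_n$ is determined by the sum constraint; since each $s_\ell$ takes at most $|\Lambda_M|-|m_\ell|+1 \le (2M+1)^d + 1$ values, the claimed bound follows.

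For part~(2), the first step will be to classify the admissible nontrivial collisions. Writing $v - v_* = \epsilon_v v_{\ell_v}$ with $\epsilon_v \in \{\pm 1\}$, momentum conservation $v+w = v'+w'$ becomes the integer relation $\epsilon_v v_{\ell_v} + \epsilon_w v_{\ell_w} - \epsilon_{v'} v_{\ell_{v'}} - \epsilon_{w'} v_{\ell_{w'}} = 0$. Integer independence forces the signed multiplicity of each $v_j$ to vanish; a short case analysis over the possible coincidences among $\{\ell_v,\ell_w,\ell_{v'},\ell_{w'}\}$, together with discarding the collisions having $\{v,w\}\cap\{v',w'\} \neq \emptyset$ (for which $p(x,q,\cdot) \equiv 0$), leaves only collisions of the form $\{v_*\pm v_\ell\} \mapsto \{v_*\pm v_{\ell'}\}$ with $\ell \ne \ell'$. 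Such a collision decreases $s_\ell$ by $2$ and increases $s_{\ell'}$ by $2$, keeping all other pair-sums fixed. Consequently the measure factorizes as
\[
\bar\nu_{M,\bi}(\bk) \propto \prod_{\ell=1}^n F_\ell(s_\ell), \qquad F_\ell(s) := \binom{|\Lambda_M|}{(s+m_\ell)/2}\binom{|\Lambda_M|}{(s-m_\ell)/2},
\]
and a direct computation of the ratio $F_\ell(s+2)/F_\ell(s)$ shows that each $F_\ell$ is log-concave, i.e.\ this ratio is non-increasing in $s$.

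The sequence will be constructed greedily: if $\bk_{(j)} \ne \bk^*$, the identity $\sum_\ell s_\ell(\bk_{(j)}) = \sum_\ell s_\ell^*$ forces the existence of indices $\ell_1,\ell_2$ with $s_{\ell_1}(\bk_{(j)}) > s_{\ell_1}^*$ and $s_{\ell_2}(\bk_{(j)}) < s_{\ell_2}^*$. Take $q_j \in \cQ$ to be any collision transferring mass from pair $\ell_1$ to pair $\ell_2$; the strict inequalities combined with the parity $s_{\ell_i} \equiv m_{\ell_i} \pmod 2$ guarantee that the required particles and holes are available, so $q_j$ acts nontrivially and $\bk_{(j+1)} := [\bk_{(j)}]^{q_j} \in \cD_{M,\bi}$. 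To verify monotonicity of $\bar\nu_{M,\bi}$, combine the log-concavity of $F_{\ell_1}, F_{\ell_2}$ (using $s_{\ell_1}(\bk_{(j)}) - 2 \ge s_{\ell_1}^*$ and $s_{\ell_2}(\bk_{(j)}) \le s_{\ell_2}^* - 2$) with the local-maximum property \eqref{eq:sg-key} at $\bk^*$ applied to the \emph{reverse} collision (from pair $\ell_2$ to pair $\ell_1$); this yields
\[
\frac{\bar\nu_{M,\bi}(\bk_{(j+1)})}{\bar\nu_{M,\bi}(\bk_{(j)})} = \frac{F_{\ell_1}(s_{\ell_1}-2)F_{\ell_2}(s_{\ell_2}+2)}{F_{\ell_1}(s_{\ell_1})F_{\ell_2}(s_{\ell_2})} \ge \frac{F_{\ell_1}(s_{\ell_1}^*)F_{\ell_2}(s_{\ell_2}^*)}{F_{\ell_1}(s_{\ell_1}^*+2)F_{\ell_2}(s_{\ell_2}^*-2)} \ge 1.
\]
Since $\sum_\ell |s_\ell(\bk_{(j)}) - s_\ell^*|$ strictly decreases by $4$ at each step, the states $\bk_{(j)}$ are pairwise distinct, and the procedure terminates in at most $|\cD_{M,\bi}|-1$ steps; the ``In particular'' claim then follows from the chained inequality along the path. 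The main obstacle is the collision classification, where the integer-independence hypothesis is essential: it collapses all nontrivial collisions to the simple ``pair-transfer'' dynamics on $(s_\ell)$, after which the greedy argument driven by log-concavity is routine.
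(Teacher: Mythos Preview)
Your argument is correct. Part~(1) coincides with the paper's proof up to notation (you use the pair-sums $s_\ell$ where the paper uses $k_\ell=k_{v_*+v_\ell}$, but the fixed differences $m_\ell=\alpha_\ell$ make these interchangeable).

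For part~(2) you take a genuinely different route. The paper proceeds in two phases: first it greedily climbs from $\bk$ to \emph{some} local maximizer $\tilde\bk$, then it proves a structural lemma that any two local maximizers differ by at most one unit in every $k_\ell$, and finally it shows that $\tilde\bk$ can be pushed to $\bk^*$ by at most $\lfloor n/2\rfloor$ collisions along which $\bar\nu$ stays constant. Your argument dispenses with the closeness-of-maximizers lemma and the two-phase construction entirely: you move directly towards $\bk^*$, and at each step you compare the ratio $\bar\nu(\bk_{(j+1)})/\bar\nu(\bk_{(j)})$ with the corresponding ratio at $\bk^*$ via the log-concavity of $F_\ell$, then invoke \eqref{eq:sg-key} for the \emph{reverse} collision at $\bk^*$ to conclude. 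This works because the strict inequalities $s_{\ell_1}>s_{\ell_1}^*$ and $s_{\ell_2}<s_{\ell_2}^*$ (together with the parity constraint) simultaneously guarantee that both the forward collision at $\bk_{(j)}$ and the reverse collision at $\bk^*$ are admissible, so \eqref{eq:sg-key} gives content rather than the trivial equality. The payoff is a shorter, single-pass argument with a built-in termination certificate (the $L^1$-distance to $\bk^*$ drops by $4$ at each step), at the cost of making the log-concavity of the binomial factors do all the work; the paper's approach instead yields the extra structural information that all local maximizers are nearly equal, which you do not need but which may be of independent interest.
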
 

The spectral gap corresponding to the collision kernel $p(q,\bk)$ with respect to $\bar\nu_{M,\bi}$ is a direct corollary of Lemma \ref{lem:sg-key}.

\begin{cor}\label{cor:sg-k} For all $g : \cD_{M,\bi} \to \R$ with $E_{\bar\nu_{M,\bi}} [g]=0$,
\begin{align}\label{eq:sg-k}
  E_{\bar\nu_{M,\bi}} \big[g^2(\bk)\big] \le |\cD_{M,\bi}|^2\sum_{q\in\cQ} E_{\bar\nu_{M,\bi}} \left[ p(q,\bk)\big(g(\bk^q)-g(\bk)\big)^2 \right],
\end{align}
where $p(q,\bk)$ is the collision kernel defined in \eqref{pqbk}.
\end{cor}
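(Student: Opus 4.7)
The plan is a canonical-path estimate. First, since $E_{\bar\nu_{M,\bi}}[g]=0$, for any $c\in\R$ we have $E_{\bar\nu_{M,\bi}}[(g-c)^2] = E_{\bar\nu_{M,\bi}}[g^2] + c^2 \ge E_{\bar\nu_{M,\bi}}[g^2]$, so choosing $c=g(\bk^*)$ with $\bk^*$ the maximizer from Lemma \ref{lem:sg-key}(2) yields the reduction $E_{\bar\nu_{M,\bi}}[g^2] \le E_{\bar\nu_{M,\bi}}[(g(\bk)-g(\bk^*))^2]$.

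Next, for each $\bk\in\cD_{M,\bi}$, I would fix the path $\bk=\bk_{(0)},\ldots,\bk_{(\ell)}=\bk^*$ with $\ell=\ell(\bk)\le|\cD_{M,\bi}|$ and $\bk_{(j+1)}=\bk_{(j)}^{q_j}$ provided by Lemma \ref{lem:sg-key}(2), write $g(\bk)-g(\bk^*)$ as a telescoping sum, and apply Cauchy--Schwarz to obtain
\begin{equation*}
(g(\bk)-g(\bk^*))^2 \le |\cD_{M,\bi}|\sum_{j=0}^{\ell-1}\big(g(\bk_{(j)}^{q_j})-g(\bk_{(j)})\big)^2.
\end{equation*}
After taking expectation with respect to $\bar\nu_{M,\bi}$ and swapping the order of summation, the right-hand side becomes a sum over edges $(\bk',q)\in\cD_{M,\bi}\times\cQ$ weighted by the set $\Gamma(\bk',q):=\{\bk\in\cD_{M,\bi}:\text{the path from }\bk\text{ traverses }\bk'\to(\bk')^q\}$.

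The final step is a congestion estimate. For any $\bk\in\Gamma(\bk',q)$, the weak monotonicity $\bar\nu_{M,\bi}(\bk_{(j)})\le\bar\nu_{M,\bi}(\bk_{(j+1)})$ along the path gives $\bar\nu_{M,\bi}(\bk)\le\bar\nu_{M,\bi}(\bk')$, while $|\Gamma(\bk',q)|\le|\cD_{M,\bi}|$ trivially; hence $\sum_{\bk\in\Gamma(\bk',q)}\bar\nu_{M,\bi}(\bk)\le|\cD_{M,\bi}|\,\bar\nu_{M,\bi}(\bk')$. Moreover, $p(q,\bk')$ is a strictly positive integer on every non-trivial edge (trivial edges with $\bk_{(j+1)}=\bk_{(j)}$ contribute zero), so we may insert the kernel $p(q,\bk')\ge 1$ for free. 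Combining these ingredients produces \eqref{eq:sg-k}. I expect the congestion step to be the main obstacle: the weak-monotonicity property built into Lemma \ref{lem:sg-key}(2) is precisely what allows $\bar\nu_{M,\bi}(\bk)\le\bar\nu_{M,\bi}(\bk')$ without introducing extra weights, and one has to track carefully how the bound $\ell\le|\cD_{M,\bi}|$ from Cauchy--Schwarz and the congestion bound $|\Gamma(\bk',q)|\le|\cD_{M,\bi}|$ combine to exactly the prefactor $|\cD_{M,\bi}|^2$ claimed in \eqref{eq:sg-k}.
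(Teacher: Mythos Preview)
Your proposal is correct and follows essentially the same route as the paper: reduce to $E_{\bar\nu_{M,\bi}}[(g(\bk)-g(\bk^*))^2]$ using the zero-mean assumption, telescope along the path from Lemma \ref{lem:sg-key}(2), apply Cauchy--Schwarz with $\ell\le|\cD_{M,\bi}|$, use the monotonicity $\bar\nu_{M,\bi}(\bk)\le\bar\nu_{M,\bi}(\bk_{(j)})$ to transfer the weight to the edge, bound the congestion by $|\cD_{M,\bi}|$, and finally insert $p(q,\bk')\ge1$. The paper's presentation is slightly terser but the argument is identical.
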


\begin{rem}
Let $g(\bk):=E_{\nu_{M,\bi}}[f|\bK(\eta)=\bk]$.
Through direct computation, the sum in the right-hand side of \eqref{eq:sg-k} is equal to $2C|\Lambda_M|^3\langle g(\bK), -L_M^{c}g(\bK) \rangle_{\nu_{M,\bi}}$.
Hence, \eqref{eq:sg-k} indeed gives the spectral gap of the collision operator acting on $\bK$.
\end{rem}

\begin{proof}[Proof of Corollary \ref{cor:sg-k}]
Let $\bk^*\in\cD_{M,\bi}$ be any vector that satisfies \eqref{eq:sg-key}.
For each $\bk\in\cD_{M,\bi}$, let $\bk_{(j)}$ be the corresponding sequence in Lemma \ref{lem:sg-key} of length $\ell + 1$.
As $E_{\bar\nu_{M,\bi}} [g]=0$,
\begin{equation*}
  \begin{aligned}
    E_{\bar\nu_{M,\bi}} \big[g^2(\bk)\big] &\le E_{\bar\nu_{M,\bi}} \left[ \big(g(\bk)-g(\bk^*)\big)^2 \right]\\
    &\le E_{\bar\nu_{M,\bi}} \left[ \ell\sum_{j=0}^{\ell-1} \big(g(\bk_{(j)})-g(\bk_{(j+1)})\big)^2 \right].
  \end{aligned}
\end{equation*}
Since $\ell\le|\cD_{M,\bi}|$ and $\bar\nu_{M,\bi}(\bk) \le \bar\nu_{M,\bi}(\bk_{(j)})$ for all $j$,
\begin{equation*}
  \begin{aligned}
    E_{\bar\nu_{M,\bi}} \big[g^2(\bk)\big] 
    &\le |\cD_{M,\bi}| E_{\bar\nu_{M,\bi}} \left[ \sum_{j=0}^{\ell-1} \big(g(\bk_{(j)})-g(\bk_{(j+1)})\big)^2 \right]\\
    &\le |\cD_{M,\bi}|\sum_{\bk\in\cD_{M,\bi}} \sum_{j=0}^{\ell-1} \bar\nu_{M,\bi}(\bk_{(j)})\big(g(\bk_{(j)})-g(\bk_{(j+1)})\big)^2.
  \end{aligned}
\end{equation*}
Denote by $\bk'\sim\bk$ if $\bk'=\bk^q$ for some $q\in\cQ$ satisfying $p(q,\bk)>0$.
Then, \comm{[Over the 2nd inequality, $\sum_\bk$ turns into $|\cD_{M,\bi}|$, $\ell$ turns into $|\cD_{M,\bi}|$, and $\sum_j$ turns into $\sum_{\bk, \bk'}$.]}
\begin{align*}
  E_{\bar\nu_{M,\bi}} \big[g^2(\bk)\big] 
  \le |\cD_{M,\bi}|^2\sum_{\bk\in\cD_{M,\bi}} \sum_{\bk'\sim\bk} \bar \nu_{M,\bi}(\bk)\big(g(\bk)-g(\bk')\big)^2.
\end{align*}
The result follows, since $p(q,\bk)\ge1$ if $\bk^q\not=\bk$.
\end{proof}

Now, we can state the proof of Proposition \ref{p:cV}\ref{p:cV:sg}.
For simplicity, hereafter we adopt the following short notations for the measures:
\begin{equation*}
  \nu:=\nu_{M,\bi}, \quad \nu_\bk = \nu_{M|\bk}, \quad \bar\nu:=\bar\nu_{M,\bi}.
\end{equation*}

\begin{proof}[Proof of Proposition \ref{p:cV}\ref{p:cV:sg}] \label{p:cV:sg:pf}
Without loss of generality, let $E_{\nu} [f]=0$.
Recall that $\nu_\bk=\nu(\,\cdot\,|Y_{M,\bk})$.
With $F(\bk):=E_{\nu_\bk} [f]$ and $\bK$ defined in \eqref{bK}, we have \comm{[For wlog, see p.77]}
\begin{align} \label{pfxz}
  E_{\nu} \big[f^2\big] = E_{\nu} \big[(f-F(\bK))^2\big] + E_{\bar \nu} \big[F^2\big].
\end{align}

We separately estimate both terms on the right-hand side of \eqref{pfxz}.
For the first term, the uniform spectral gap for mean-field exclusion \cite[Lemma 8.8]{Quastel} yields that
\begin{equation*} 
  \begin{aligned}
    E_{\nu} \big[(f-F(\bK))^2\big] &= \sum_{\bk\in\cD_{M,\bi}} \bar\nu(\bk)E_{\nu_\bk} \big[(f-F(\bk))^2\big]\\
    &\le C\sum_{\bk\in\cD_{M,\bi}} \bar\nu(\bk)\big\langle f,-\tilde L_M^{ex,s}f \big\rangle_{\nu_\bk} = C\big\langle f,-\tilde L_M^{ex,s}f \big\rangle_{\nu}.
  \end{aligned}
\end{equation*} 
For the second term, it suffices to show that
\begin{equation}\label{eq:pfsg2}
  E_{\bar \nu} \big[F^2\big] \le C|\cD_{M,\bi}|^2|\Lambda_M|^3\big\langle f, -\tilde L_M^{c}f \big\rangle_{\nu}.
\end{equation}
Indeed, \eqref{eq:pfsg2} together with Lemma \ref{lem:sg-lr} yields that
\begin{equation*}
  E_{\bar \nu} \big[F^2\big] \le C|\cD_{M,\bi}|^2|\Lambda_M|^3 \left( \big\langle f,-\tilde L_M^{ex,s}f \big\rangle_{\nu}+\big\langle f, -L_M^{c}f \big\rangle_{\nu} \right).
\end{equation*}
Since $|\Lambda_M|=(2M+1)^d$ and $|\cD_{M,\bi}|\le CM^{(n-1)d}$,
\begin{equation*}
  E_{\bar \nu} \big[F^2\big] \le C'M^{(2n+1)d} \left( \big\langle f,-\tilde L_M^{ex,s}f \big\rangle_{\nu}+\big\langle f, -L_M^{c}f \big\rangle_{\nu} \right).
\end{equation*}
The proof is then concluded by Lemma \ref{lem:sg-mf}.

We are left with the proof of \eqref{eq:pfsg2}. From Corollary \ref{cor:sg-k} we get
\begin{equation*}
  E_{\bar \nu} \big[F^2\big] \le |\cD_{M,\bi}|^2\sum_{q\in\cQ} E_{\bar\nu} \left[ p(q,\bk)\big(F(\bk^q)-F(\bk)\big)^2 \right].
\end{equation*}
Then, it suffices to show
\begin{equation}\label{eq:pfsg3}
  \frac1{|\Lambda_M|^3}\sum_{q\in\cQ} E_{\bar\nu} \left[ p(q,\bk)\big(F(\bk^q)-F(\bk)\big)^2 \right] 
  \le 2\big\langle f, -\tilde L_M^{c}f \big\rangle_{\nu}.
\end{equation}
Fix $\bk \in \cD_{M,\bi}$ and $q\in\cQ$ such that $p(q,\bk)>0$.
Notice that for $\eta \in Y_{M,\bk}$ and $r=(x,y,x',y')$, we have that $p(r,q,\eta)=1$ implies $\eta^{r,q} \in Y_{M,\bk^q}$.
Meanwhile, for each $\xi \in Y_{M,\bk^q}$, the number of distinct pairs of $\eta \in Y_{M,\bk}$ and $r$ such that $p(r,q,\eta)=1$, $\xi=\eta^{r,q}$ is
\begin{equation*}
  (|\Lambda_M-k_v^q|)(\Lambda_M-k_w^q)k_{v'}^qk_{w'}^q = \frac{p(q,\bk)\nu_{\bk^q}(\eta^{r,q})}{\nu_\bk(\eta)}.
\end{equation*}
Therefore,
\begin{equation*}
  \begin{aligned}
  F(\bk^q) &= \sum_{\xi \in Y_{M,\bk^q}} f(\xi)\nu_{\bk^q}(\xi)\\
  &=\frac1{p(q,\bk)}\sum_{r\in\Lambda_M^4} \sum_{\eta \in Y_{M,\bk}} p(r,q,\eta)f(\eta^{r,q})\nu_\bk(\eta).
  \end{aligned}
\end{equation*}
With the formula above, the left-hand side of \eqref{eq:pfsg3} is rewritten as
\begin{equation}\label{eq:pfsg4}
  \frac1{|\Lambda_M|^3}\sum_{q\in\cQ} E_{\bar\nu} \left[ \frac1{p(q,\bk)}\bigg(\sum_{r\in\Lambda_M^4} E_{\nu_\bk} \Big[p(r,q,\eta)\big(f(\eta^{r,q})-f(\eta)\big)\Big]\bigg)^2 \right].
\end{equation}
Note that for all $\eta \in Y_{M,\bk}$, $\sum_r p(r,q,\eta) = p(q,\bk)$.
The Cauchy--Schwarz inequality then yields that \eqref{eq:pfsg4} is bounded from above by \comm{[PvM: C-S means Jensen here]}
\begin{equation*}
  \frac1{|\Lambda_M|^3}\sum_{q\in\cQ} E_{\bar\nu} \left[ \sum_{r\in\Lambda_M^4} E_{\nu_\bk} \Big[p(r,q,\eta)\big(f(\eta^{r,q})-f(\eta)\big)^2\Big] \right].
\end{equation*}
In view of \eqref{tLMc}, this is equal to $2\langle f,-\tilde L_M^cf \rangle_\nu$, and thus \eqref{eq:pfsg3} follows.
\end{proof}

We are left with the proofs of Lemmas \ref{lem:sg-mf} and \ref{lem:sg-key}.
Lemma \ref{lem:sg-key} is proved using the idea in \cite[Lemma 6.3]{BL}.

\begin{proof}[Proof of Lemma \ref{lem:sg-key}]
From \eqref{eq:cV}, $\bk\in\cD_{M,\bi}$ means
\begin{equation*}
  \bi = \sum_{\ell=1}^n k_{v_*+v_\ell}\begin{pmatrix} 1 \\ v_*+v_\ell \end{pmatrix} + \sum_{\ell=1}^n k_{v_*-v_\ell}\begin{pmatrix} 1 \\ v_*-v_\ell \end{pmatrix}.
\end{equation*}
For simplicity, we denote $k_{\pm\ell}=k_{v_* \pm v_\ell}$ for $\ell=1$, ..., $n$.
Then,
\begin{equation}\label{pfxw}
  \bi - \begin{pmatrix} 1 \\ v_* \end{pmatrix}\sum_{\ell=1}^n (k_\ell+k_{-\ell}) = \sum_{\ell=1}^n (k_\ell-k_{-\ell})\begin{pmatrix} 0 \\ v_\ell \end{pmatrix}.
\end{equation}
From the $0$th coordinate in \eqref{pfxw} we obtain that $\sum_{\ell=1}^n (k_\ell+k_{-\ell}) = i_0$ is independent of $\bk$. Then, from the other coordinates and the condition imposed on $\{v_\ell\}$, the values of
\begin{equation*}
  \alpha_\ell := k_\ell-k_{-\ell}, \quad \ell=1,\ldots,n
\end{equation*}
are also independent of the choice of $\bk$.
Therefore, $\bk\in\cD_{M,\bi}$ are completely determined by $k_\ell$, $\ell=1$, ..., $n-1$.
Since $0 \le k_\ell \le |\Lambda_M| = (2M+1)^d$, the first part of Lemma \ref{lem:sg-key} is proved.

We turn now to the second part.
From the observation above,
\begin{equation}\label{eq:vbar-simple}
  \bar\nu(\bk) = \frac1{Z_{M,\bi}}\prod_{\ell=1}^n \binom{|\Lambda_M|}{k_\ell}\binom{|\Lambda_M|}{k_\ell - \alpha_\ell}, \quad \forall\,\bk\in\cD_{M,\bi}.
\end{equation}
Let $\cQ(\bk)$ be the collection of all possible collisions in a configuration $\eta \in Y_{M,\bk}$.
In other words,
\begin{equation*}
  \cQ(\bk):=\big\{q\in\cQ;\ p(q,\bk)>0\big\}, \quad \forall\,\bk\in\cD_{M,\bi}.
\end{equation*}
From the condition on $\{v_\ell\}$, a collision can happen only between a pair of particles with velocities $v_* \pm v_\ell$. Moreover, the velocities after such a collision have to be the pair $v_* \pm v_{\ell'}$ for some $\ell'\not=\ell$.
For $\alpha\in\{-|\Lambda_M|,-|\Lambda_M|+1,\ldots,|\Lambda_M|\}$, define
\begin{equation*}
  \begin{aligned}
    h_\alpha(k) &:= \left[ \binom{|\Lambda_M|}{k+1}\binom{|\Lambda_M|}{k-\alpha+1} \right]^{-1} \binom{|\Lambda_M|}{k}\binom{|\Lambda_M|}{k-\alpha}\\
    &= \frac{(k+1)(k-\alpha+1)}{(|\Lambda_M|-k)(|\Lambda_M|-k+\alpha)},
  \end{aligned}
\end{equation*}
for integers $k$ such that all the binomial coefficients that appeared above are well-defined.
In view of \eqref{eq:vbar-simple}, $\bk\in\cD_{M,\bi}$ satisfies \eqref{eq:sg-key} if and only if
\begin{equation*}
  \frac{\bar\nu(\bk^q)}{\bar\nu(\bk)} 
  = \frac{ h_{\alpha_\ell}(k_\ell-1) }{ h_{\alpha_{\ell'}}(k_{\ell'})} \le 1, \quad \forall\,1 \le \ell \not= \ell' \le n.
\end{equation*}
Note that it is also fulfilled when $\ell=\ell'$ since $h_\alpha$ is strictly increasing.
Hence, we obtain the following equivalent statement for \eqref{eq:sg-key}:
\begin{equation}\label{pfxu}
  \max_{\ell=1,\ldots,n} h_{\alpha_\ell}(k_\ell-1) \le \min_{\ell=1,\ldots,n} h_{\alpha_\ell}(k_\ell).
\end{equation}
Notice that there is at least one solution to \eqref{pfxu}, since $\cD_{M,\bi}$ is a finite set and the element $\bk$ satisfying $\bar\nu(\bk)=\max\{\bar\nu(\bk');\bk'\in\cD_{M,\bi}\}$ also satisfies \eqref{pfxu}.

We claim that any two solutions $\bk$, $\bk'$ to \eqref{pfxu} satisfy
\begin{equation}\label{pfxu2}
  \max\big\{|k_\ell-k'_\ell|;\,\ell=1,\ldots,n\big\} \le 1.
\end{equation}
To prove this claim, assume by contrast that $k_\ell \ge k'_\ell + 2$ for some $\ell$.
As $\sum_{\ell = 1}^n k_\ell=\sum_{\ell = 1}^n k'_\ell$, there exists an $\ell'$ such that $k_{\ell'} \le k'_{\ell'}-1$.
Since $h_\alpha$ is strictly increasing and $\bk'$ solves \eqref{pfxu},
\begin{equation*}
  h_{\alpha_{\ell'}}(k_{\ell'}) 
  \le h_{\alpha_{\ell'}}(k'_{\ell'}-1) 
  \le h_{\alpha_\ell}(k'_\ell) 
  < h_{\alpha_\ell}(k_\ell-1),
\end{equation*}
which contradicts with the fact that $\bk$ is a solution to \eqref{pfxu}.

Now we iteratively construct the desired sequence $\bk_{(j)}$ from any $\bk_0\in\cD_{M,\bi}$ to an arbitrarily fixed solution $\bk^*$ to \eqref{pfxu}.
To this end, let $\bk_{(0)} := \bk_0$.
Given $\bk_{(j)}$, if there exists $q\in\cQ$ such that $\bar\nu([\bk_{(j)}]^q)>\bar\nu(\bk_{(j)})$, then let $\bk_{(j+1)} := [\bk_{(j)}]^q$. This iteration produces a sequence with distinct elements, and therefore ends at $\bk = \bk_{(j)}$ for some $j \leq |\cD_{M,\bi}|$. By construction, $\bk$ satisfies \eqref{eq:sg-key}, and thus $\bk$ satisfies \eqref{pfxu} too. If $\bk=\bk^*$, then we put $j = \ell$ and the construction of the sequence is complete.
Otherwise, we construct $\bk_{(j+1)}, \bk_{(j+2)}, \ldots$ in the following manner. By \eqref{pfxu2} we may assume without loss of generality that $k_1^*=k_1-1$, $k_2^*=k_2+1$.
Let $q=(v_*+v_1,v_*-v_1,v_*+v_2,v_*-v_2)$.
If we can show that $\bk^q$ is also a solution to \eqref{pfxu}, then $\bar\nu(\bk)=\bar\nu(\bk^q)$ and we can complete the sequence by repeating this procedure for at most $[\tfrac n2]$ times, thanks to \eqref{pfxu2}.
The total length of the sequence is bounded from above by $|\cD_{M,\bi}|$, since all elements are distinct.

To show that $\bk^q$ solves \eqref{pfxu}, we only need
\begin{equation}\label{pfxu3}
  h_{\alpha_2}(k_2) \le \min_{\ell=1,2} h_{\alpha_\ell}(k_\ell^q), \quad \max_{\ell=1,2} h_{\alpha_\ell}(k_\ell^q-1) \le h_{\alpha_1}(k_1-1).
\end{equation}
Indeed, as $k_\ell^q=k_\ell$ for all $\ell\not=1$, $2$, from \eqref{pfxu3} we obtain that
\begin{equation*}
  \max_{\ell=1,\ldots,n} h_{\alpha_\ell}(k_\ell^q-1) \le \max_{\ell\not=2} h_{\alpha_\ell}(k_\ell-1), \quad \min_{\ell\not=1} h_{\alpha_\ell}(k_\ell) \le \min_{\ell=1,\ldots,n} h_{\alpha_\ell}(k_\ell^q).
\end{equation*}
The desired result then follows, since $\bk$ satisfies \eqref{pfxu}.
To show the first inequality in \eqref{pfxu3}, note that $h_{\alpha_2}(k_2)<h_{\alpha_2}(k_2^q)$ follows from the monotonicity of $h_{\alpha_2}$. Since both $\bk$ and $\bk^*$ satisfy \eqref{pfxu}, we have
\begin{equation} \label{pfxq}
  h_{\alpha_1}(k_1^q) = h_{\alpha_1}(k_1-1) \le h_{\alpha_2}(k_2) = h_{\alpha_2}(k_2^*-1) \le h_{\alpha_1}(k_1^*).
\end{equation}
Since $k_1^q=k_1^*$, all inequalities in \eqref{pfxq} are equalities, and thus $h_{\alpha_2}(k_2)=h_{\alpha_1}(k_1^q)$. This proves the first inequality in \eqref{pfxu3}.
The second one is proved similarly.
\end{proof}

\begin{proof}[Proof of Lemma \ref{lem:sg-mf}]
In view of \eqref{LMexs} and \eqref{tLMexs}, it is sufficient to show that there exists a constant $C > 0$ such that for all $v \in \cV$,
\begin{align} \label{pfxx}
  \sum_{x,y\in\Lambda_M} E_{\nu} \left[ \big(f(\eta^{x,y,v})-f(\eta)\big)^2 \right]
  \leq C M^{d+2} \sum_{\substack{x,y\in\Lambda_M \\ |x-y|=1}} E_{\nu} \left[ \big(f(\eta^{x,y,v})-f(\eta)\big)^2 \right].
\end{align}
Given $x$, $y\in\Lambda_M$ and $\eta \in Y_M(\bi)$, we construct the long-range jump $\eta\to\eta^{x,y,v}$ through a sequence of nearest-neighbor jumps in the following way.
Let $m=(m_1,\ldots,m_d)=y-x \in \Z^d$ and $|m|:=|m_1|+\cdots+|m_d|$.
Define the route $\{x^i;0 \le i \le |m|\}$ by $x^0:=x$,
\begin{equation*}
  x^i:=x^{i-1}+\mathrm{sgn}(m_j)e_j, \quad 
  \text{if} \quad \sum_{j'=1}^{j-1} |m_{j'}|+1 \le i \le \sum_{j'=1}^j |m_{j'}|.
\end{equation*}
Observe that $x^{|m|}=x^0+m=y$.
For any $\eta \in Y_M(\bi)$, define the sequence $\{\eta^i;0 \le i \le 2|m|-1\}$ by $\eta^0:=\eta$,
\begin{equation*}
  \eta^i=
  \begin{cases}
    \big(\eta^{i-1}\big)^{x^{i-1},x^i,v} &\text{if}\ 1 \le i \le |m|,\\
    \big(\eta^{i-1}\big)^{x^{2|m|-i-1},x^{2|m|-i},v} &\text{if}\ |m|+1 \le i \le 2|m|-1.
  \end{cases}
\end{equation*}
A direct computation shows that $\eta^{2|m|-1}=\eta^{x,y,v}$. 
Hence, we obtain a sequence satisfying $\eta^0=\eta$, $\eta^{2|m|-1}=\eta^{x,y,v}$ and for all $1 \le i \le 2|m|-1$ we have $\eta^i=(\eta^{i-1})^{z,z',v}$ for some $z,z' \in \Lambda_M$ with $|z-z'|=1$.

Applying the Cauchy--Schwarz inequality,
\begin{equation*}
  \big(f(\eta^{x,y,v})-f(\eta)\big)^2 \le (2|m|-1)\sum_{i=1}^{2|m|-1} \big(f(\eta^i)-f(\eta^{i-1})\big)^2.
\end{equation*}
Since $\nu$ is invariant under the change of variables $\eta^{i-1}$ to $\eta$ and $|m| \le 2dM$,
\begin{equation*}
  \begin{aligned}
    E_{\nu} \left[ \big(f(\eta^{x,y,v})-f(\eta)\big)^2 \right]
    &\le 4dM\sum_{i=1}^{2|m|-1} E_{\nu} \left[ \big(f(\eta^i)-f(\eta^{i-1})\big)^2 \right]\\
    &\le 8dM\sum_{i=1}^{|m|} E_{\nu} \left[ \big(f(\eta^{x^{i-1},x^i,v})-f(\eta)\big)^2 \right].
  \end{aligned}
\end{equation*}
Summing over all pairs of $x$, $y\in\Lambda_M$, we obtain that the left-hand side of \eqref{pfxx} is bounded from above by
\begin{equation*}
  8dM\sum_{x,y\in\Lambda_M} \sum_{i=1}^{|m|} E_{\nu} \left[ \big(f(\eta^{x^{i-1},x^i,v})-f(\eta)\big)^2 \right].
\end{equation*}
Any bond $(z,z')$ of $\Lambda_M$ appears at most $(2M+1)^{d+1}$ times in the summation above.
To see this, let $z=(z_1,\ldots,z_d)$ and $z'=z+e_j$. A necessary condition for the route connecting $x$ to $y$ to contain $(z,z')$ is
\begin{equation*}
  x_{j'}=z_{j'}, \quad \forall\,j'=j+1,\ldots,d, \quad y_{j'}=z_{j'}, \quad \forall\,j'=1,\ldots,j-1.
\end{equation*}
Therefore, $(z,z')$ appears no more than $(2M+1)^{d+1}$ times.
This yields that the left-hand side of \eqref{pfxx} has an upper bound given by
\begin{equation*}
  8dM(2M+1)^{d+1} \sum_{\substack{ z,z'\in\Lambda_M \\ |z-z'|=1 }} E_{\nu} \left[ \big(f(\eta^{z,z',v})-f(\eta)\big)^2 \right],
\end{equation*}
which completes the proof.
\end{proof}

\newpage

\appendix 

\section{Properties of $\mu_\blam^N$}
\label{a:props:mulamN}

In this section, we prove the properties of $\mu_\blam^N$ stated in Section \ref{s:sett:invmeas}. The computation is standard; we provide it for completeness. We abbreviate the notation for summations as explained in Section \ref{s:ent:prod:not}, and add to it that $q$ always runs over $\cQ$. 

Let $f,g : X_N \to \R$ and $\blam \in \R^{d+1}$ be given. In the computation below up to \eqref{pfxs} we do not use the specific form of $p_N$ in \eqref{pN}. Suppressing the dependence on $v$ in the notation, we write
\begin{align} \label{pfxt}
  \lrang{g, L_N^{ex} f}_{\mu_\blam^N}
  = \sum_{\eta v} \sum_{|x-y|=1} \eta_x (1 - \eta_y) p_N(y-x) \big[ f(\eta^{xy}) - f(\eta) \big] g (\eta) \mu_\blam^N(\eta).
\end{align}
We focus on the part of the summand involving $f(\eta^{xy})$. Changing variables $\zeta = \eta^{xy}$ and noting that $\eta = \zeta^{xy}$, this part of the summand reads as
\begin{align*}
  \sum_{\zeta v} \sum_{|x-y|=1} \zeta_x^{xy} (1 - \zeta_y^{xy}) p_N(y-x) f(\zeta) g (\zeta^{xy}) \mu_\blam^N(\zeta^{xy}).
\end{align*}
Note that $\zeta_x^{xy} = \zeta_y$, $\zeta_y^{xy} = \zeta_x$ and $\mu_\blam^N(\zeta^{xy}) = \mu_\blam^N(\zeta)$. Using these observations and swapping $x$ and $y$ in the summand, the display above reads as
\begin{align*}
  \sum_{\zeta v} \sum_{|x-y|=1} \zeta_x (1 - \zeta_y) p_N(x-y) f(\zeta) g (\zeta^{xy}) \mu_\blam^N(\zeta).
\end{align*}
Replacing $\zeta$ by $\eta$ and substituting this in \eqref{pfxt}, we obtain
\begin{align} \label{pfxs}
  \lrang{g, L_N^{ex} f}_{\mu_\blam^N}
  = \sum_{\eta v} \sum_{|x-y|=1} \eta_x (1 - \eta_y)  \big[ p_N(x-y) g(\eta^{xy}) - p_N(y-x) g(\eta) \big] f (\eta) \mu_\blam^N(\eta).
\end{align}

Note that $L_N^{ex,s}$ and $L_N^{ex,a}$ are obtained from the expression of $L_N^{ex}$ by replacing $p_N$ respectively by its even ($p_N^{even}$) and odd ($p_N^{odd}$) part. Hence, \eqref{pfxs} (which is derived for general $p_N$) yields
\begin{align*}
  \lrang{g, L_N^{ex,s} f}_{\mu_\blam^N}
  &= \sum_{\eta v} \sum_{|x-y|=1} \eta_x (1 - \eta_y) p_N^{even} (y-x) \big[g(\eta^{xy}) -  g(\eta) \big] f (\eta) \mu_\blam^N(\eta) \\
  &= \lrang{L_N^{ex,s} g, f}_{\mu_\blam^N},
\end{align*}
which demonstrates that $L_N^{ex,s}$ is symmetric under $\mu_\blam^N$, and 
\begin{align} \label{pfxp}
  \lrang{g, L_N^{ex,a} f}_{\mu_\blam^N}
  = \sum_{\eta v} \sum_{|x-y|=1} \eta_x (1 - \eta_y) p_N^{odd}(y-x) \big[-g(\eta^{xy}) -  g(\eta) \big] f (\eta) \mu_\blam^N(\eta).
\end{align}
We claim that 
\begin{align} \label{pfxo}
  \sum_{\eta v} \sum_{|x-y|=1} \eta_x (1 - \eta_y) p_N^{odd}(y-x) g(\eta) f (\eta) \mu_\blam^N(\eta) = 0.
\end{align}
Then, it follows from \eqref{pfxp} that $\lrang{g, L_N^{ex,a} f}_{\mu_\blam^N} = - \lrang{L_N^{ex,a} g,  f}_{\mu_\blam^N}$ as desired. The claim \eqref{pfxo} follows from 
\begin{align*}
  \sum_{|x-y|=1} \eta_x (1 - \eta_y) p_N^{odd}(y-x) 
  &= \sum_{|x-y|=1} \frac{\eta_x - \eta_y}2 p_N^{odd}(y-x) \\
  &= \sum_x \sum_{|z| = 1} \frac{\eta_x - \eta_{x+z}}2 p_N^{odd}(z) \\
  &= \frac12 \sum_{|z| = 1} p_N^{odd}(z) \sum_x (\eta_x - \eta_{x+z}) 
  = 0.
\end{align*}

Applying the above to $g=1$ shows that $\mu_\blam^N$ is invariant under $L_N^{ex}$.

Next, we show that $L_N^c$ is symmetric with respect to $\mu_\blam^N$. This implies that $\mu_\blam^N$ is invariant under $L_N^c$.
Suppressing the dependence on $x$ in the notation, we write
\begin{align} \label{pfxr}
  \lrang{g, L_N^c f}_{\mu_\blam^N}
  = \sum_{\eta xq} p(q,\eta) \big[ f(\eta^q) - f(\eta) \big] g (\eta) \mu_\blam^N(\eta).
\end{align}
Similar to the computation leading to \eqref{pfxs}, we focus on the part corresponding to $f(\eta^q)$ and change variables to $\zeta = \eta^q$. Note that $\zeta^{q'}  = \eta$, where for $q = (v,w,v',w')$ we set $q' = (v',w',v,w)$ as the opposite collision. Then, the part corresponding to $f(\eta^q)$ reads as
\begin{align*}
  \sum_{\zeta xq} p(q,\zeta^{q'}) g(\zeta^{q'}) \mu_\blam^N(\zeta^{q'}) f (\zeta).
\end{align*}
Replacing $\zeta$ by $\eta$ and $q'$ by $q$, this reads as
\begin{align*}
  \sum_{\eta xq} p(q',\eta^q) g(\eta^{q}) \mu_\blam^N(\eta^{q}) f (\eta).
\end{align*}
It is easy to check that $p(q',\eta^q) = p(q,\eta)$. Moreover, if $p(q,\eta) \neq 0$, then $\eta_v = \eta_w = 1$, $\eta_{v'} = \eta_{w'} = 0$ and it is easy to see that $\mu_\blam^N(\eta^{q}) = \mu_\blam^N(\eta)$. Substituting these observations in \eqref{pfxr}  we obtain
\begin{align*}
  \lrang{g, L_N^c f}_{\mu_\blam^N}
  = \sum_{\eta xq} p(q,\eta) \big[ g(\eta^{q})  -  g(\eta) \big] f (\eta) \mu_\blam^N(\eta)
  = \lrang{L_N^c g, f}_{\mu_\blam^N}.
\end{align*}

\paragraph{Acknowledgements}

PvM was supported by JSPS KAKENHI Grant Number JP20K14358. 
KT was supported by JSPS KAKENHI Grant Number JP22K13929.
KT and LX are grateful to Professor Shirai for the financial support of LX's visit to Kyushu University.

\end{document}